\theoremstyle{plain}
\newtheorem{thm}{Theorem}[section]
\newtheorem{cor}[thm]{Corollary}
\newtheorem{prop}[thm]{Proposition}
\newtheorem{lem}[thm]{Lemma}
\newtheorem{rem}[thm]{Remark}
\newtheorem{definition}[thm]{Definition}
\title[Quadratic differentials]{Quadratic differentials and function theory on Riemann surfaces}
\thanks{The author was supported in part by a PSC-CUNY grant.}
\author{Dragomir \v Sari\' c}
\address{PhD Program in Mathematics, The Graduate Center, CUNY \\ 365 Fifth Ave., N.Y., N.Y., 10016 and\newline Department of Mathematics, Queens College, CUNY\\ 65--30 Kissena Blvd., Flushing, NY 11367, USA.}
\email{Dragomir.Saric@qc.cuny.edu}
\begin{document}

\begin{abstract} A finite-area holomorphic quadratic differentials on an arbitrary Riemann surface $X=\mathbb{H}/\Gamma$ is uniquely determined by its horizontal measured foliation. By extending our prior result for $\Gamma$ of the first kind to arbitrary Fuchsian group $\Gamma$, we obtain that a measured foliation $\mathcal{F}$ is realized by the horizontal foliation of a finite-area holomorphic quadratic differential on $X$ if and only if $\mathcal{F}$ has finite Dirichlet integral. 

We determine the image of this correspondence when the infinite Riemann surface has bounded geometry- an extension of the realization result of Hubbard and Masur for compact surfaces. A corollary is that a planar surface $X$ with bounded pants decomposition and with (at most) countably many ends is parabolic, i.e., does not support Green's function, in notation $X\in O_G$ where $G$ is Green's function. 

The class of harmonic functions with finite Dirichlet integral is denoted by $HD$. We give a geometric proof that the class $O_{HD}$ of the Riemann surfaces (that do not support non-constant $HD$-functions) is invariant under quasiconformal maps. Lyons proved that the $O_{HB}$ class (surfaces that do not support non-constant bounded harmonic functions) is not invariant under quasiconformal maps, and it is well-known that the $O_G$ class is invariant. Therefore, the noninvariant class $O_{HB}$ is between two invariant classes: $O_G\subset O_{HB}\subset O_{HD}$.
 \end{abstract}


\maketitle

\section{Introduction}

Let $X$ be a Riemann surface holomorphically covered by the hyperbolic plane $\mathbb{H}$. The covering group $\Gamma$ acts discretely and properly discontinuously by isometries on the universal covering $\mathbb{H}$. The quotient $\mathbb{H}/\Gamma$ is conformal to $X$, and $X$ is said to be {\it infinite} if $\Gamma$ is not finitely generated. 

The hyperbolic area of an infinite Riemann surface $X$ is infinite, and the following dichotomy holds: almost every geodesic on $X$ is recurrent, or almost every geodesic is transient (see Hopf \cite{hopf}). In the former case \cite{hopf}, the geodesic flow on $T^1X$ is ergodic with respect to the natural Liouville measure (which is of infinite volume). The Hopf-Tsuji-Sullivan theorem (see Sullivan \cite{Sullivan} and also \cite{Nicholls1}) states that the geodesic flow is ergodic if and only if the Poincar\'e series for $\Gamma$ is divergent if and only if the Brownian motion is recurrent.

From the complex analysis point of view, fundamental objects for the study of Riemann surfaces are holomorphic quadratic differentials.
A finite-area holomorphic quadratic differential on $X$ gives Euclidean charts $\{ w_i\}$ of finite total area with transitions $w_1=\pm w_2+const$ that are preserving the horizontal and vertical directions. 
While the Euclidean and the hyperbolic metrics are conformal, their relationship is not explicit. In a prior work \cite{Saric23}, the author established that the (hyperbolic) geodesic flow on $T^1X$ is ergodic if and only if almost every horizontal trajectory of every finite-area holomorphic quadratic differential is recurrent. 

Let $A(X)$ be the space of all finite-area holomorphic quadratic differentials on a Riemann surface $X$. The map which assigns to each $\varphi\in A(X)$ the homotopy class of its horizontal foliation is injective for any Riemann surface $X=\mathbb{H}/\Gamma$ (see \cite{MardenStrebel}, \cite{Saric-heights}). 
An important result of Hubbard and Masur \cite{HubbardMasur} states that when $X$ is a compact Riemann surface, each measured foliation on $X$ (with $k$-pronged singularities) is homotopic to a horizontal foliation of a unique finite-area holomorphic quadratic differential on $X$. This can be interpreted as a realization of any measured foliation by the horizontal trajectories of a holomorphic quadratic differential.

By Thurston \cite{Thurston} (see also \cite{Levitt} and \cite{Sarictt}), a foliation on a Riemann surface can be straightened to a measured (geodesic) lamination in its conformal hyperbolic metric. Therefore, we can rephrase Hubbard-Masur result: each measured geodesic lamination on a compact surface can be realized by the horizontal foliation of a unique holomorphic quadratic differential.

When $X=\mathbb{H}/\Gamma$ is an infinite Riemann surface with $\Gamma$ of the first kind, it is known that not every measured lamination (even not every bounded one) can be realized by the horizontal foliation of a finite-area holomorphic quadratic differential (see \cite{Saric-heights}).
The author proved that the image $ML_{int}(X)$ of $A(X)$ in the space of all measured laminations 
consists of laminations that can be realized by partial measured foliations whose Dirichlet integrals are finite (see \cite[Theorem 1.6]{Saric23}). We extend this result to all Fuchsian groups $\Gamma$, including the possibility of $\Gamma =\{ id\}$, i.e., $X=\mathbb{H}$ (see Appendix for the proof).

\begin{thm}
\label{thm:realization-arbitrary}
Let $X=\mathbb{H}/\Gamma$ be an arbitrary Riemann surface. Then, a measured geodesic lamination $\mu$  on $X$ can be realized by the horizontal foliation of a unique finite-area holomorphic quadratic differential if and only if   
$\mu$ is realized by a partial measured foliation with finite Dirichlet integral.
\end{thm}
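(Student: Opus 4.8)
The implication ``$\Rightarrow$'' is elementary. If $\mu$ is the straightening of the horizontal foliation $\mathcal F_\varphi$ of some $\varphi\in A(X)$, then $\mathcal F_\varphi$ is itself a (partial) measured foliation realizing $\mu$, and in a natural coordinate $z=x+iy$ in which $\varphi=dz^2$ its height function is $h=\mathrm{Im}\int\sqrt\varphi=y$, so its Dirichlet integral is $\int_X|\nabla h|^2\,dA=\int_X|\varphi|=\|\varphi\|_{L^1}<\infty$. The uniqueness of the realizing differential is the injectivity of $\varphi\mapsto[\mathcal F_\varphi]$, which holds on every $X=\mathbb H/\Gamma$ by \cite{MardenStrebel,Saric-heights}, so no further argument is needed there.

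For ``$\Leftarrow$'' the plan is to run the proof of the first-kind case \cite[Theorem 1.6]{Saric23} along the same lines, checking that no step requires $\Lambda(\Gamma)=\partial\mathbb H$. Given a partial measured foliation $\mathcal F$ of finite Dirichlet integral $D(\mathcal F)$ realizing $\mu$, exhaust $X$ by an increasing sequence of compact bordered subsurfaces $X_n$ with $\bigcup_n X_n=X$; on each $X_n$ solve the Hubbard--Masur realization problem for the restricted foliation $\mathcal F|_{X_n}$ (after doubling $X_n$ across $\partial X_n$, or via a bordered version of Hubbard--Masur), obtaining $\varphi_n$ holomorphic on $X_n$ realizing $\mathcal F|_{X_n}$. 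By the Dirichlet-principle characterization of the realizing differential (the horizontal foliation of $\varphi_n$ minimizes the Dirichlet integral in its measure class, with minimal value equal to the area), one gets $\|\varphi_n\|_{L^1(X_n)}\le D(\mathcal F|_{X_n})\le D(\mathcal F)$. Passing to a weak-$*$ limit yields a holomorphic quadratic differential $\varphi$ on $X$ with $\|\varphi\|_{L^1}\le D(\mathcal F)<\infty$, i.e.\ $\varphi\in A(X)$, and continuity of the straightening map identifies the horizontal foliation of $\varphi$, up to straightening, with $\mu$; uniqueness then follows from ``$\Rightarrow$''. A useful sanity check (and a substitute for the exhaustion in the core part) is the double of the convex core $N(X)=\mathrm{ConvHull}(\Lambda(\Gamma))/\Gamma$ across its geodesic boundary: this is a first-kind surface $X^d=\mathbb H/\Gamma^d$ carrying an anticonformal involution $\sigma$, to which \cite[Theorem 1.6]{Saric23} applies directly, and the uniqueness forces the resulting $\varphi^d$ to be $\sigma$-symmetric.

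The step I expect to be the main obstacle is controlling the weak-$*$ limit at the ends of $X$, so as to guarantee that $\varphi$ realizes \emph{all} of $\mu$ and that its area stays bounded there: one must rule out that the horizontal foliations of the $\varphi_n$ leak into the funnels (when $\Gamma$ is of the second kind) or out to infinity (when $\Gamma$ is of the first kind but infinitely generated). I expect this to follow from the extremal/area-minimizing characterization of $\varphi$ together with the Collar Lemma, which yields uniform decay of the $\varphi_n$ towards the funnel ends and prevents loss of transverse measure in the limit; this is exactly the point where, for $\Gamma$ of the first kind, the additional structure exploited in \cite{Saric23} was convenient. The elementary cases ($X$ a hyperbolic disk, a punctured disk, or a hyperbolic cylinder) should be handled separately by writing the realizing differential explicitly in terms of $\mu$ and $X$ and checking the equivalence by hand.
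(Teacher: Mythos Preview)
Your ``$\Rightarrow$'' direction and the overall plan for ``$\Leftarrow$'' (approximate, bound $L^1$-norms via the Dirichlet principle, extract a limit $\varphi\in A(X)$) are fine and match the paper in spirit. However, the approximating family is set up differently: rather than exhausting $X$ by compact bordered $X_n$, the paper doubles the \emph{whole} of $X$ across its border to a surface $\widehat X$, proves that $\widehat X$ has covering group of the first kind, and then punctures $\widehat X$ along an increasing dense sequence $A_n$ on the former border, obtaining first-kind surfaces $\widehat X_n=\widehat X\setminus A_n$ to which \cite[Theorem 1.6]{Saric23} applies directly. The punctures are essential: a leaf of $\mathcal F$ homotopic into a border component becomes null-homotopic in $\widehat X$, so the doubled foliation is not proper, and the $A_n$ fix this by pinning such leaves. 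Your ``double the convex core'' idea is close to this but would need the same puncturing trick, and you would still have to argue that the double is of the first kind (nontrivial when $X$ has half-plane ends).

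The genuine gap is the sentence ``continuity of the straightening map identifies the horizontal foliation of $\varphi$, up to straightening, with $\mu$''. When $\Gamma$ is of the second kind this is precisely the hard part, and it is \emph{not} a consequence of any general continuity statement: Strebel observed that $\limsup_n h_{\varphi_n}(\delta)\le h_\varphi(\delta)$ can fail for locally uniformly convergent sequences. Equality of $h_\varphi(\gamma)$ and $i(\gamma,\mathcal F)$ for simple closed $\gamma$ follows from \cite[Lemma~3.8]{Saric23}, but on a bordered surface this does \emph{not} determine the foliation---you must also match heights on cross-cuts $\delta$ with both endpoints on a single border component. The paper establishes $i(\delta,\mathcal F)\le h_\varphi(\delta)$ by a Cauchy--Schwarz/modulus estimate (choosing a compact subarc of the border and bounding the mass of trajectories escaping through the complementary arcs by the modulus of a degenerating curve family), and the reverse inequality $\liminf_n h_{\varphi_n}(\delta)\ge h_\varphi(\delta)$ by lifting to the disk, decomposing the trajectory family by border arcs, and invoking Strebel's convergence theorems for totally regular trajectories. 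None of this is supplied by the Collar Lemma or by the extremal characterization alone; your proposal names the right location of the difficulty but does not address it.
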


In other words, the space $A(X)$ is in a one-to-one correspondence with the space $ML_{int}(X)$ of measured laminations homotopic to partial measured foliations with finite Dirichlet integral. We explicitly determine when a measured lamination on $X$ is homotopic to a partial measured foliation with finite Dirichlet integral in terms of the hyperbolic geometry invariants of the surface $X$. 

Given a simple closed geodesic $\gamma$, the {\it intersection number} $i(\gamma ,\mu )$ is the total transverse measure of $\gamma$ with respect to the measured lamination $\mu$. 
Our next result is an explicit characterization of the image $ML_{int}(X)$ of $A(X)$ in terms of the hyperbolic geometry of $X$. Assume $X$ is a Riemann surface with bounded geometry. Then there exists a decomposition of $X$ into geodesic pairs of pants and right-angled hexagons whose sizes are uniformly bounded (see \cite{Kinjo} and \S \ref{sec:foliations}). We prove (see Proposition \ref{prop:int-numbers} and Theorem \ref{thm:holq-ml-bdd})

\begin{thm}
\label{thm:main}
Let $X$ be a Riemann surface with bounded geometry and let $\{\alpha_n\}_n$, $\{\beta_n\}_n$ and $\{\beta_s\}_s$ be three countable families of simple closed geodesics that partition $X$ into bounded shapes as in \S \ref{sec:const-rect}. Then a measured lamination $\mu$ on $X$ is realized by the horizontal foliation of a (unique) finite-area holomorphic quadratic differential $\varphi\in A(X)$ if and only if
$$
\sum_n [i(\alpha_n,\mu )^2+i(\beta_n,\mu )^2]+\sum_s i(\beta_s,\mu )^2 <\infty .
$$
\end{thm}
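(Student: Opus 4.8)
By Theorem \ref{thm:realization-arbitrary}, a measured lamination $\mu$ lies in $ML_{int}(X)$ if and only if $\mu$ is homotopic to a partial measured foliation $\mathcal{F}$ with finite Dirichlet integral $\mathrm{Dir}(\mathcal{F})<\infty$. So the entire task is to show that, for a surface of bounded geometry with the prescribed decomposition into uniformly bounded pairs of pants and right-angled hexagons cut out by the curve families $\{\alpha_n\}$, $\{\beta_n\}$, $\{\beta_s\}$, the condition $\mathrm{Dir}(\mathcal{F})<\infty$ is equivalent to the convergence of the series $\sum_n[i(\alpha_n,\mu)^2+i(\beta_n,\mu)^2]+\sum_s i(\beta_s,\mu)^2$. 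The plan is to prove the two inequalities separately, piece by piece over the decomposition, with constants that are uniform by bounded geometry.

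First I would establish the lower bound $\mathrm{Dir}(\mathcal{F})\gtrsim \sum i(\cdot,\mu)^2$, which is the easy direction and is presumably Proposition \ref{prop:int-numbers}. For a single bounded geodesic arc or simple closed geodesic $c$ crossing a piece $P$ of the decomposition, the Cauchy–Schwarz inequality gives $i(c,\mu)^2 = \bigl(\int_c |dv|\bigr)^2 \le \ell(c)\int_c |dv|^2 \le \ell(c)\cdot \mathrm{Dir}(\mathcal{F}|_{U})$ where $U$ is a fixed-width collar neighborhood of $c$ and $|dv|$ is the transverse-measure density of $\mathcal{F}$; since $\ell(c)$ is uniformly bounded and the collars $U$ have bounded overlap multiplicity (again by bounded geometry), summing over all curves in the three families yields $\sum_n[i(\alpha_n,\mu)^2+i(\beta_n,\mu)^2]+\sum_s i(\beta_s,\mu)^2 \lesssim \mathrm{Dir}(\mathcal{F}) < \infty$.

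The harder direction is to construct, from a lamination $\mu$ with $\sum i(\cdot,\mu)^2<\infty$, a partial measured foliation $\mathcal{F}$ homotopic to $\mu$ with $\mathrm{Dir}(\mathcal{F})<\infty$; this is where the construction of \S \ref{sec:const-rect} enters, and I expect it to be the main obstacle. The idea is local-to-global: on each bounded piece $P$ of the decomposition, $\mu\cap P$ is a finite union of arcs whose endpoints lie on the boundary geodesics $\alpha_n,\beta_n,\beta_s$, and the weights assigned to these arcs are controlled by the finitely many intersection numbers $i(\alpha_n,\mu),i(\beta_n,\mu),i(\beta_s,\mu)$ of the boundary curves adjacent to $P$. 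Using the Euclidean rectangle model for a foliation in $P$ (a disjoint union of rectangles, one per arc, with one family of sides on the boundary), the Dirichlet integral of $\mathcal{F}$ restricted to $P$ is estimated by the total area of these rectangles, which by the uniform bound on the geometry of $P$ is $\lesssim \sum (\text{boundary intersection numbers})^2$. The two remaining points are: (i) matching the foliations on adjacent pieces along their common boundary geodesic — here one needs the foliations built in neighboring pieces to be homotopic rel the prescribed transverse measure on the shared curve, which can be arranged because only the total transverse measure (an intersection number), not its distribution, matters up to homotopy, together with an interpolation in a bounded collar that costs a controlled amount of Dirichlet energy; and (ii) summing the per-piece estimates: since each curve $\alpha_n,\beta_n,\beta_s$ borders a bounded number of pieces, $\mathrm{Dir}(\mathcal{F}) = \sum_P \mathrm{Dir}(\mathcal{F}|_P) \lesssim \sum_n[i(\alpha_n,\mu)^2+i(\beta_n,\mu)^2]+\sum_s i(\beta_s,\mu)^2 <\infty$.

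Assembling these gives the two-sided estimate $\mathrm{Dir}(\mathcal{F}) \asymp \sum_n[i(\alpha_n,\mu)^2+i(\beta_n,\mu)^2]+\sum_s i(\beta_s,\mu)^2$ with uniform constants, hence $\mu\in ML_{int}(X)$ if and only if the series converges, and the theorem follows by combining with Theorem \ref{thm:realization-arbitrary}. The technical heart is the per-piece rectangle construction with controlled matching along boundaries, and verifying that bounded geometry makes all the implied constants uniform so that the countable sums behave; the routine Cauchy–Schwarz estimates and the bounded-multiplicity bookkeeping I would relegate to \S \ref{sec:const-rect}.
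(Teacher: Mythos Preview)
Your high-level two-direction strategy matches the paper's, and the necessary direction is essentially right (though your single-curve Cauchy--Schwarz $\bigl(\int_c|dv|\bigr)^2\le\ell(c)\int_c|dv|^2$ does not by itself bound by $\mathrm{Dir}(\mathcal{F}|_U)$; you must average over a collar-filling family of curves, which is the length--area/modulus argument the paper actually uses in Proposition~\ref{prop:int-numbers}).

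The genuine gap is in the sufficient direction, and it stems from a misreading of the setup. The curves $\beta_n$ and $\beta_s$ are \emph{not} boundary curves of the decomposition: only the cuffs $\alpha_n$ (together with hexagon sides in planar parts) cut $X$ into pieces. The $\beta_n$ are transverse curves dual to the $\alpha_n$, and $i(\beta_n,\mu)$ measures the \emph{twist} of $\mu$ around the cuff $\alpha_n$. Your sentence ``interpolation in a bounded collar that costs a controlled amount of Dirichlet energy'' is exactly where this matters and where your argument fails: matching two rectangle-foliations across the annulus around $\alpha_n$ is \emph{not} cheap in general, because if $\mu$ winds many times around $\alpha_n$ the foliation must wind with it, and the Dirichlet cost of that winding is of order (twist)$^2$. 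The paper's construction (Theorem~\ref{thm:holq-ml-bdd}) devotes most of its effort to foliating the collar $C_i$ with trapezoids and parallelograms whose Dirichlet integral is bounded by $C'(a^2+\sum_l b_l^2)$, where $a$ is a twist parameter shown to satisfy $a\le i(\beta_n,\mu)$; this is precisely why the $i(\beta_n,\mu)^2$ term must appear in the hypothesis. The $\beta_s$ play an analogous role for the planar hexagonal parts.

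So your per-piece rectangle estimate on the pairs of pants and hexagons is fine, but the gluing step across cuffs is not a bounded interpolation---it is the heart of the construction, and its cost is controlled only because $\sum_n i(\beta_n,\mu)^2<\infty$. Without recognizing the twist role of $\beta_n,\beta_s$, your proposal does not explain why those terms are needed at all.
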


The above condition on $\mu$ is purely topological. 
The sufficiency of the above condition on a measured lamination $\mu$ is proved by a direct construction of a partial measured foliation from the data of the transverse measure of $\mu$. This construction uses the decomposition of the surface $X$ into countably many pieces with bounded geometry and constructing a partial measured foliation on each piece such that two partial measured foliations agree on the intersections of the pieces (see \S \ref{sec:sufficient-cond}).

In particular, when $X$ has a bounded pants decomposition, we define a train track $\tau$ by adding finitely many edges connecting cuffs in each pair of pants such that it weakly carries $\mu$ (see \cite{Sarictt} and also \S \ref{sec:const-rect}). Then a measured lamination $\mu$ defines a weight function $w_{\mu}:E(\tau )\to\mathbb{R}$ that satisfies switch conditions at vertices, where $E(\tau )$ is the set of edges of $\tau$. Conversely, every weight function $w:E(\tau )\to\mathbb{R}$ defines a measured lamination weakly carried by $\tau$. We prove (see Proposition \ref{prop:int-weights} and Corollary \ref{cor:holq-ml-bdd})

\begin{thm}
\label{thm:weights}
Let $X$ be a Riemann surface with a bounded pants decomposition, and let $\tau$ be a train track constructed by adding finitely many edges to each pair of pants. Then the weight function
$$
w:E(\tau )\to\mathbb{R}
$$
corresponds to a measured lamination $\mu$ that is realized (in the homotopy) by the horizontal foliation of a finite-area holomorphic quadratic differential $\varphi\in A(X)$ if and only if
$$
\sum_{e\in E(\tau )}[w(e)]^2<\infty .
$$
\end{thm}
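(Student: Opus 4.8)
The plan is to deduce Theorem~\ref{thm:weights} from Theorem~\ref{thm:main} by showing that the two quadratic quantities are comparable up to uniform multiplicative constants. Concretely, I would establish a two-sided estimate
$$
\sum_{e\in E(\tau )}[w_\mu(e)]^2 \asymp \sum_n\bigl[i(\alpha_n,\mu)^2+i(\beta_n,\mu)^2\bigr]+\sum_s i(\beta_s,\mu)^2,
$$
where the implied constants depend only on the bounds in the bounded-geometry hypothesis (in particular on the number of edges added per pair of pants, which is bounded since the pants decomposition is bounded). Given such an estimate, one side is finite iff the other is, and Theorem~\ref{thm:main} immediately identifies the weight functions that come from a $\varphi\in A(X)$; the uniqueness of $\varphi$ is already contained in Theorem~\ref{thm:realization-arbitrary} (injectivity of the horizontal-foliation map).

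First I would make precise the dictionary between edge weights and intersection numbers. For a train track $\tau$ obtained by adjoining finitely many arcs to each pair of pants in the bounded decomposition, the cuffs $\alpha_n,\beta_n,\beta_s$ are, after a homotopy, carried by $\tau$ in a controlled way: each cuff $c$ runs over a uniformly bounded number of branches of $\tau$, so by the switch conditions $i(c,\mu)$ equals a fixed integer-coefficient linear combination of finitely many of the $w_\mu(e)$, and conversely each $w_\mu(e)$ is, by solving the (locally finite, uniformly bounded) switch relations inside the finitely-many branches meeting a given pair of pants, a bounded linear combination of the intersection numbers of the cuffs bounding that pair of pants together with finitely many auxiliary weights that are themselves controlled. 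This is exactly the standard normal-coordinate description of measured laminations carried by a train track (cf.\ \cite{Sarictt}), localized to each pair of pants; the only new point is that the linear maps and their inverses have norms bounded independently of the pair of pants, which follows from boundedness of the decomposition.

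Once the local linear-algebra comparison is in hand, I would sum over the (countably many) pieces. Because each branch of $\tau$ and each cuff is involved in only boundedly many of the local relations — the pairs of pants meeting a given cuff, and the cuffs meeting a given pair of pants, are bounded in number — the passage from the local estimates to the global $\ell^2$ comparison is the usual bounded-overlap argument: a sum of squares of bounded linear combinations, with bounded overlap, is comparable to the sum of squares of the constituents. I would spell this out as a short lemma (finitely-banded, uniformly-bounded linear operators on $\ell^2$ indexed by the pieces are bounded with bounded inverse on the relevant subspace). This yields the displayed equivalence and hence the theorem.

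The main obstacle I anticipate is purely bookkeeping rather than conceptual: pinning down the homotopy that puts the cuffs in minimal position with respect to $\tau$ and then reading off the switch-condition relations uniformly across all pairs of pants, including the hexagon pieces and the places where the three families $\{\alpha_n\},\{\beta_n\},\{\beta_s\}$ interact. One must be careful that adding ``finitely many edges to each pair of pants'' is done consistently along shared cuffs so that $\tau$ is a genuine (bi-infinite) train track weakly carrying every $\mu$ under consideration, and that the auxiliary (non-cuff) weights do not contribute extra freedom — they are determined by the cuff intersection numbers via the switch conditions up to a bounded ambiguity, so their squares are absorbed into the right-hand side. Handling the corner cases of the decomposition in \S\ref{sec:const-rect} uniformly is where the care is needed; everything else is the above comparison plus a citation of Theorem~\ref{thm:main}.
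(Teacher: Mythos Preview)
Your approach is correct and essentially identical to the paper's: the paper proves Proposition~\ref{prop:int-weights}, which is exactly your two-sided $\ell^2$ comparison between edge weights and the intersection numbers $i(\alpha_n,\mu), i(\beta_n,\mu)$, and then combines it with Theorem~\ref{thm:main} (specifically Theorem~\ref{thm:holq-ml-bdd}) to obtain Corollary~\ref{cor:holq-ml-bdd}, which is the theorem in question. One minor simplification: since Theorem~\ref{thm:weights} assumes a bounded \emph{pants} decomposition (no planar parts or hexagons), the family $\{\beta_s\}_s$ is empty and the ``hexagon pieces'' and ``corner cases'' you worry about do not arise here---the bookkeeping reduces to the standard Dehn--Thurston picture in each pair of pants.
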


Given a fixed pants decomposition of a Riemann surface $X$ with covering Fuchsian group $\Gamma$ of the first kind and a measured lamination $\mu$ on $X$, there exists a train track $\tau$ on $X$ obtained by adding finitely many edges connecting cuffs of the pants decomposition that carries the lamination $\mu$ (see \cite{Sarictt}). Therefore, the above two theorems provide parametrizations of the spaces of finite-area holomorphic quadratic differentials on infinite Riemann surfaces with bounded geometry or with bounded pants decompositions. 

\vskip .2 cm

From now on, $X$ is an arbitrary infinite Riemann surface without any assumption on the geometry. The function theory on Riemann surfaces defines various classes of Riemann surfaces based on the non-existence of certain classes of harmonic (non-constant) functions defined on the surfaces (see Ahlfors-Sario \cite {AhlforsSario}, Lyons-Sullivan \cite{LyonsSullivan}). A Green's function $g:X\setminus\{ x_0\}\to\mathbb{R}$ is a harmonic function with a logarithmic singularity at $x_0\in X$ that vanishes at the ideal boundary of $X$. The class of Riemann surfaces that does not support Green's function is denoted by $O_G$, and such surfaces are called {\it parabolic} (see \cite{AhlforsSario}). 
A Riemann surface is parabolic if and only if the geodesic flow on $T^1X$ is ergodic (see \cite{AhlforsSario} and \cite{Sullivan}). 
The class of bounded harmonic function $u:X\to\mathbb{R}$ is denoted by $HB$. The class of Riemann surfaces that do not support a non-constant $HB$-function is denoted by $O_{HB}$ (see \cite{AhlforsSario}). Kaimanovich \cite{Kaim} proved that $X\in O_{HB}$ if and only if the horocyclic flow on $T^1X$ is ergodic. 

The non-existence of Green's function is equivalent to the fact that the extremal distance between a compact part of $X$ and its ideal boundary is infinite (see \cite{AhlforsSario}). The extremal distance is a quasiconformal quasi-invariant and $O_G$ is invariant under quasiconformal maps. Surprisingly, Lyons \cite{Lyons} proved that the class $O_{HB}$ is not invariant under quasiconformal maps. The class $O_{HB}$ is said to satisfy the weak Liouville property. A related class $O_{HD}$ consists of surfaces that do not support a non-constant harmonic function with finite Dirichlet integral. The following inclusions (see \cite{AhlforsSario})
$$
O_G\subset O_{HB}\subset O_{HD}
$$
are proper for non-planar Riemann surfaces. 

In Sario-Nakai \cite{SarioNakai}, Chapter III is devoted to studying Royden's compactification of a Riemann surface and, as an application, it was proved that $O_{HD}$ is invariant under quasiconformal map.  We give a new proof of this fact using the geometric structures induced by quadratic differentials and realization theorem (see Theorem \ref{thm:O_HD)}).

\begin{thm}
\label{thm:inv-HD}
The class $O_{HD}$ is invariant under quasiconformal maps.
\end{thm}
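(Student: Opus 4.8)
The plan is to show that if $f: X \to Y$ is a quasiconformal homeomorphism and $X \notin O_{HD}$, then $Y \notin O_{HD}$. Since quasiconformality is symmetric (the inverse of a $K$-qc map is $K$-qc), this suffices for invariance. The key is to connect $HD$-functions to the geometric structures supplied by the realization theorem. I would first recall that a non-constant $HD$-function $u$ on $X$ has a harmonic conjugate locally, so that $du + i\,{}^*du$ is (locally) the differential of a holomorphic function; squaring it, $(\partial u)^2$ — more precisely the holomorphic quadratic differential $\varphi = (\partial(u+i\tilde u))^2$ built from the complex gradient — is a genuine \emph{single-valued} holomorphic quadratic differential on $X$, and its area is $\iint_X |\varphi| = \iint_X |\nabla u|^2/4 < \infty$ precisely because $u \in HD$. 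Thus $\varphi \in A(X)$, and its horizontal foliation is the level-set foliation of $u$ (the trajectories along which $u$ is constant are the vertical ones, or vice versa depending on convention), which carries a transverse measure $|du|$.

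**Transporting the foliation.** The heart of the argument is that the horizontal measured foliation of $\varphi$, pushed forward by the quasiconformal map $f$, is a measured foliation $\mathcal{F}' = f_*\mathcal{F}$ on $Y$ whose Dirichlet integral is still finite. This is where I expect the main obstacle to lie: one must show that a $K$-quasiconformal map distorts the Dirichlet integral of a measured foliation by at most a factor depending only on $K$. For an honest (smooth, or finite-critical-graph) quadratic differential the foliation has a well-defined transverse measure and the Dirichlet-type energy $\iint |\nabla h_{\mathcal F}|^2$ transforms like a conformal energy up to the factor $K$ under $K$-qc maps — this is the standard length-area / Grötzsch-type estimate, but making it rigorous for the measured-foliation (lamination) formalism, including near singular points and out at the ideal boundary, is the delicate part. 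One route: work with the flat metric $|\varphi|^{1/2}|dz|$ on $X$, pull back via $f^{-1}$ to get a measurable conformal metric on $Y$ of finite area and with bounded eccentricity, and observe that the associated measured foliation on $Y$ has finite Dirichlet integral because the qc map changes transverse measures and widths of rectangles by bounded amounts. Equivalently, phrase it as: the intersection numbers $i(\gamma, f_*\mu)$ are comparable to extremal-length data that $f$ distorts by at most $K$, and then invoke the characterization of $ML_{int}$ via finite Dirichlet integral (Theorem \ref{thm:realization-arbitrary}) — but one should be careful, since Theorem \ref{thm:main}'s combinatorial criterion assumed bounded geometry, which we are not assuming here, so the cleaner path stays with the intrinsic ``finite Dirichlet integral of a partial measured foliation'' formulation of Theorem \ref{thm:realization-arbitrary}.

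**Applying realization and recovering an $HD$-function on $Y$.** Once $\mathcal{F}' = f_*\mathcal{F}$ on $Y$ is known to have finite Dirichlet integral, Theorem \ref{thm:realization-arbitrary} gives a finite-area holomorphic quadratic differential $\psi \in A(Y)$ with horizontal measured foliation $\mathcal{F}'$. It remains to extract a non-constant $HD$-function on $Y$ from $\psi$. Near any point away from the zeros of $\psi$, a natural coordinate $\zeta$ with $\psi = d\zeta^2$ gives local ``coordinate functions'' $\mathrm{Re}\,\zeta$ and $\mathrm{Im}\,\zeta$ that are harmonic with $|\nabla(\mathrm{Re}\,\zeta)|^2\,dA = |\psi|$; the issue is global single-valuedness. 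The function $v := $ (the transverse-measure ``height'' function of $\mathcal F'$, i.e. $v(y) = \int$ over a path from a basepoint of the transverse measure of $\mathcal F'$) is well-defined and single-valued because $\mathcal F'$ carries an honest transverse measure (it is $f_*$ of one, and $f$ is a homeomorphism), it is continuous, and locally it agrees with $|\mathrm{Im}\,\zeta|$-type data up to the foliation's combinatorics; one then checks $v$ (or a harmonic function with the same boundary/level-set behavior, obtained by solving a Dirichlet problem on exhausting compacta and passing to the limit using the finite Dirichlet bound) is a non-constant $HD$-function on $Y$. I would organize this last step by: (i) verifying $v$ is non-constant because $\mathcal F'$ is non-trivial (as $\mathcal F$ was, $u$ being non-constant); (ii) bounding $D_Y(v) = \iint_Y |\nabla v|^2 \le C(K)\, D_X(u) < \infty$ via the same qc-distortion estimate; (iii) replacing $v$ by its harmonic part (Royden decomposition / orthogonal projection in the Dirichlet space), which is harmonic, still non-constant, and has no larger Dirichlet integral. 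This yields $Y \notin O_{HD}$, completing the proof. The genuinely new content, and the main obstacle, is step two: the quasi-invariance of the finiteness of the Dirichlet integral of a measured foliation under quasiconformal maps — everything else is bookkeeping around the realization theorem.
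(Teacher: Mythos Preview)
Your outline matches the paper's strategy in its broad shape---pass from an $HD$-function to a finite-area quadratic differential, push the foliation forward by $f$, realize it on $Y$ via Theorem~\ref{thm:realization-arbitrary}, and then extract an $HD$-function on $Y$---but you misidentify where the real work lies, and the step you treat as routine is in fact the crux. The qc quasi-invariance of the Dirichlet integral of a partial measured foliation is standard: precompose the defining functions $v_i$ with $f^{-1}$ and use that a $K$-quasiconformal map distorts Dirichlet energy by at most a factor $K$. The paper takes this for granted. What the paper spends its effort on is exactly your final paragraph, and your treatment there has a genuine gap.

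Specifically, your function $v$ is not well-defined as stated. A transverse measure is unsigned, so ``integrating the transverse measure along a path from a basepoint'' is path-dependent; having ``an honest transverse measure'' is not enough. What is needed is that the foliation on $Y$ be \emph{oriented}, which does follow from the fact that it is the $f$-image of the level-set foliation of the global function $u$---the paper makes this explicit and uses it to conclude that $\psi$ admits a globally defined holomorphic square root $\phi_Y$. Even then, one must show that the real periods $\int_\alpha \mathrm{Re}(d\phi_Y)$ vanish for every closed curve $\alpha$ on $Y$, so that $\mathrm{Re}\,\phi_Y$ integrates to a global harmonic function. The paper does this by an explicit step-curve computation in the universal cover, matching horizontal arcs of $\phi_Y$ with those of $\phi$ via the correspondence $\tilde f$ of vertical leaves, and using that $\phi$ has purely imaginary periods on $X$. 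Your appeal to Royden decomposition does not bypass this: if you build $v$ from the pushed-forward foliation directly you get $u\circ f^{-1}$, which is not harmonic, and showing its harmonic part is non-constant is essentially the original problem; if instead you build $v$ from the natural parameter of $\psi$, it is already harmonic and no projection is needed---but then you owe the period computation.
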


 A harmonic function on a Riemann surface defines an Abelian differential with purely imaginary periods, and conversely, an Abelian differential with purely imaginary periods defines a harmonic function. Thus, we can replace the questions of the existence of harmonic functions with the existence of holomorphic quadratic differentials with special properties. If the square of the Abelian differential is of finite area, then a quasiconformal map will preserve the finite area property (see Theorem \ref{thm:realization-arbitrary},  \cite[Theorem 1.6]{Saric23} and \S \ref{sec:harmonic-classes}).

\vskip .2 cm

The {\it classification problem} is deciding whether an explicitly defined Riemann surface $X$ is parabolic or not, i.e., whether $X\in O_G$ or $X\notin O_G$. The classification problem was considered by many authors when considering specific constructions coming from complex analysis such as gluing slit planes or some covering constructions (see \cite{AhlforsSario}, \cite{GM}, \cite{McKS}, \cite{Merenkov}, \cite{Milnor}, \cite{Nevanlinna:criterion} and references therein). 

A more recent and somewhat different setup for the classification problem is when the surface is assigned in terms of hyperbolic geometry invariants. One such question is to determine when $X\in O_G$ based on its Fenchel-Nielsen parameters (see \cite{BHS}, \cite{HPS}, \cite{Pandazis} and \cite{PandazisSaric}). The classification of surfaces with bounded geometry can be effectively studied in terms of hyperbolic invariants.

The following theorem could (potentially) be deduced by a theorem of Kanai \cite{kanai} by carefully studying the nets on Riemann surfaces with bounded geometry. We give (see Theorem \ref{thm:bounded-pants-graph-O_G}) an independent proof in the Appendix using our Theorem \ref{thm:main} and \cite[Theorem 1.1]{Saric23}.

\begin{thm}
\label{thm:dual-graph}
Let $X$ be a Riemann surface with bounded geometry. Let $\mathcal{G}$ be the graph dual to the decomposition of $X$ into bounded pairs of pants and bounded hexagons. Then
$$
X\in O_G
$$
if and only if the simple random walk on $\mathcal{G}$ is recurrent.
\end{thm}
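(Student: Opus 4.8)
The plan is to reduce the parabolicity of $X$ to a discrete question about the dual graph $\mathcal{G}$ by combining Theorem \ref{thm:main} with the characterization of $O_G$ via finite-area holomorphic quadratic differentials from \cite[Theorem 1.1]{Saric23}. Recall that $X\notin O_G$ precisely when $X$ supports a non-constant positive superharmonic function, equivalently (via \cite{Saric23}) when there is an appropriate finite-area holomorphic quadratic differential, or — in the form we will use — when the simple random walk on $X$ (suitably discretized) is transient. On the graph side, transience of the simple random walk on $\mathcal{G}$ is equivalent, by the standard flow criterion of T.\ Lyons, to the existence of a unit flow of finite energy from a base vertex to infinity; recurrence is the nonexistence of such a flow, equivalently the divergence of effective resistance to infinity.

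The first step is to make the bridge between $\mathcal{G}$ and $X$ completely explicit. Using the decomposition of $X$ into uniformly bounded pairs of pants and right-angled hexagons (which exists by bounded geometry, cf.\ \cite{Kinjo} and \S\ref{sec:foliations}), one gets the families $\{\alpha_n\}$, $\{\beta_n\}$, $\{\beta_s\}$ of \S\ref{sec:const-rect} and the dual graph $\mathcal{G}$ whose vertices are the pieces and whose edges are exactly these separating curves. A key point is that since all pieces have uniformly bounded diameter and injectivity radius bounded below, $\mathcal{G}$ is a \emph{rough isometry} (in the sense of Kanai) of $X$; this is what lets us translate between the combinatorial and the conformal pictures. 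Concretely, I would show: a measured lamination $\mu$ on $X$ whose intersection numbers with the separating curves are given by a weight vector $(x_e)_{e\in E(\mathcal{G})}$ determines, and is determined by, those weights subject to the switch/triangle inequalities in each piece; and the quantity $\sum_n[i(\alpha_n,\mu)^2+i(\beta_n,\mu)^2]+\sum_s i(\beta_s,\mu)^2$ of Theorem \ref{thm:main} is, up to a multiplicative constant depending only on the geometry bound, the \emph{Dirichlet energy} $\sum_{e\in E(\mathcal{G})}x_e^2$ of the associated function on $\mathcal{G}$.

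The second step is the actual equivalence. Suppose the simple random walk on $\mathcal{G}$ is transient. Then there is a nonzero finite-energy flow to infinity; its edge values (or rather, the associated antisymmetric function, after passing to absolute values across each curve) give a nonzero weight system with $\sum_e x_e^2<\infty$ satisfying the flow conservation at vertices, which translates into an admissible measured lamination $\mu\neq 0$ on $X$ with finite Dirichlet integral along the curves. By Theorem \ref{thm:main} this $\mu$ is realized by a nonzero $\varphi\in A(X)$, and by \cite[Theorem 1.1]{Saric23} the existence of such a nonzero finite-area holomorphic quadratic differential is equivalent to $X\notin O_G$. Conversely, if $X\notin O_G$, then \cite[Theorem 1.1]{Saric23} yields a nonzero $\varphi\in A(X)$; Theorem \ref{thm:main} says its horizontal lamination $\mu$ has $\sum_n[i(\alpha_n,\mu)^2+i(\beta_n,\mu)^2]+\sum_s i(\beta_s,\mu)^2<\infty$, and since $\mu\neq 0$ some intersection number is positive; reading the intersection numbers as a flow on $\mathcal{G}$ (here one uses that a horizontal foliation does satisfy the Kirchhoff-type conservation across the curves bounding each piece, because a trajectory entering a piece must leave it) produces a nonzero finite-energy flow to infinity on $\mathcal{G}$, hence transience.

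The main obstacle I anticipate is the flow/conservation bookkeeping in the middle step: translating ``measured lamination with finite weighted $\ell^2$ norm'' into ``finite-energy flow to infinity'' requires care because a measured lamination only gives \emph{unsigned} transverse measures, whereas a flow needs a coherent orientation, and one must check that the switch conditions in each pair of pants (or hexagon) are compatible with Kirchhoff's node law on $\mathcal{G}$ up to the bounded distortion coming from the geometry. Making this precise — in particular ensuring the constants are uniform and that a \emph{nonzero} lamination corresponds to a \emph{nonvanishing} flow to infinity and vice versa, so that the dichotomy transfers exactly — is where the real work lies; the rest is assembling Theorem \ref{thm:main}, \cite[Theorem 1.1]{Saric23}, and Lyons' flow criterion. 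The details are carried out in the Appendix as Theorem \ref{thm:bounded-pants-graph-O_G}.
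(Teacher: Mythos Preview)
Your overall architecture---translate between finite-energy flows on $\mathcal{G}$ and measured laminations in $ML_{int}(X)$ via Theorem~\ref{thm:main}, and then invoke \cite[Theorem 1.1]{Saric23} and the flow criterion for transience---is exactly the paper's approach. But there is a genuine gap in how you invoke \cite[Theorem 1.1]{Saric23}.

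You write that ``the existence of such a nonzero finite-area holomorphic quadratic differential is equivalent to $X\notin O_G$.'' This is false: every infinite surface with covering group of the first kind supports many nonzero $\varphi\in A(X)$, parabolic or not. What \cite[Theorem 1.1]{Saric23} actually says (restated here as Theorems~\ref{thm:par-ch-cross-cuts} and~\ref{thm:non-parabolic-criteria}) is that $X\notin O_G$ if and only if some $\varphi\in A(X)$ has a \emph{positive-measure set of transient horizontal trajectories} (equivalently, $\mu_\varphi$ gives positive mass to geodesics escaping every compact set). So in the transient $\Rightarrow X\notin O_G$ direction, after you build $\varphi_u$ from the $\ell^2$ flow $u$, you still owe an argument that the horizontal leaves of $\varphi_u$ escape to infinity with positive transverse measure. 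This is where the defect $\nabla^*u(x)=\delta_x$ at the base vertex is actually used: it forces a fixed amount of transverse measure to exit any finite subsurface containing $P_x$ (the paper does this via Poincar\'e recurrence on a compact exhaustion), and that is the step missing from your outline.

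The same issue bites in the converse. Your parenthetical ``a trajectory entering a piece must leave it'' gives Kirchhoff's law $\nabla^*u\equiv 0$ at \emph{every} vertex, which is the zero flow to infinity, not a witness to transience; a recurrent lamination would produce exactly this. To get a genuine unit flow from a vertex to infinity you need an \emph{oriented} foliation with a source---the paper manufactures this not from an arbitrary $\varphi\in A(X)$ but from the harmonic function $g$ on $X\setminus P_x$ (the Green's-type potential whose existence is equivalent to $X\notin O_G$), so that the level curves of the conjugate $g^*$ are globally oriented by increasing $g$ and all emanate from $\partial P_x$. The signed transverse measures across the boundary arcs then give the function $u$ on $E(\mathcal{G})$ with $\nabla^*u=\delta_x$. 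You correctly flag orientation as an obstacle, but the specific mechanism---using the harmonic potential to orient, and the Kirchhoff defect at $x$ to force escape---is the substance of the proof, not just bookkeeping.
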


The above theorem implies certain characterizations for planar surfaces (see Theorems \ref{thm:planar-bounded-finite-ends} and \ref{thm:countable-ends-planar}). Namely,

\begin{thm}
\label{thm:planar-ends}
If $X$ is a planar surface with bounded pants decomposition and at most countably many topological ends, then $X\in O_G$.
\end{thm}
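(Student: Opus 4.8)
The plan is to deduce Theorem~\ref{thm:planar-ends} from Theorem~\ref{thm:dual-graph} by showing that the dual graph $\mathcal{G}$ of a bounded pants decomposition of a planar surface with at most countably many ends carries a recurrent simple random walk. The starting observation is that a planar Riemann surface can be realized as a domain in $\widehat{\mathbb{C}}$, so its space of ends embeds in the plane; if there are only countably many ends, the surface is (conformally) a subsurface of a surface whose complementary components in $\widehat{\mathbb{C}}$ form a countable closed set, and its dual graph is planar. Indeed, a bounded pants decomposition of a planar surface has a dual graph $\mathcal{G}$ that is planar (each pair of pants or hexagon is a disc-with-holes lying in the plane, and adjacency of pieces sharing a boundary curve gives a graph that embeds in $X\subset\widehat{\mathbb{C}}$), has uniformly bounded vertex degree (bounded geometry forces each piece to have a bounded number of boundary cuffs), and — this is the key structural input — has at most countably many ``ends'' in the graph-theoretic sense, matching the topological ends of $X$.

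The main step is then a recurrence criterion for planar graphs: a planar graph with bounded vertex degrees and finitely many ends is recurrent. More generally one wants the countable-ends case, which should follow by an exhaustion/limiting argument from the finite-ends case together with monotonicity of effective resistance. For the finite-ends case one can invoke the circle-packing / He--Schramm theory: a bounded-degree planar graph is recurrent if and only if its circle packing (in the locally finite case with finitely many ends) fills a disc rather than the plane — but a cleaner route is via the isoperimetric/volume-growth dichotomy, since a bounded-degree graph that embeds in the plane with finitely many ends has at most quadratic volume growth, and quadratic (or slower) volume growth implies recurrence by the Nash--Williams criterion (using annular cutsets whose sizes grow at most linearly, so that $\sum 1/\ell_n = \infty$). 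I would carry this out by fixing a basepoint vertex $v_0$, letting $\partial B_n$ be the set of edges between the graph ball of radius $n$ and its complement, bounding $|\partial B_n| = O(n)$ using planarity plus bounded degree plus the finite number of ends (each end contributes one ``linearly growing'' annular front), and concluding recurrence from $\sum_n |\partial B_n|^{-1} = \infty$.

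The passage from finitely many to countably many ends is where I expect the real obstacle. A countable closed subset of $\widehat{\mathbb{C}}$ can have a complicated (e.g. infinite Cantor--Bendixson rank) accumulation structure, so the volume-growth bound is no longer uniformly linear per radius. The strategy here is: enumerate the ends as $\xi_1,\xi_2,\dots$, and exhaust $X$ by subsurfaces $X_k$ obtained by ``filling in'' all but the first $k$ ends (capping off the corresponding cuffs with discs or with finitely many extra pieces), so that each $X_k$ has bounded geometry, a bounded pants decomposition, and only finitely many ends; hence $X_k\in O_G$ by the finite-ends case, equivalently the dual graph $\mathcal{G}_k$ is recurrent. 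Since $\mathcal{G}$ is the increasing union of the $\mathcal{G}_k$ (up to the bounded local modifications from capping) and effective resistance from $v_0$ to infinity is monotone under such subgraph inclusions in the limit, $\mathcal{G}$ is recurrent, hence $X\in O_G$ by Theorem~\ref{thm:dual-graph}. The delicate point to get right is that capping an end changes $\mathcal{G}$ only by a \emph{locally finite, bounded} alteration near that end — i.e. that recurrence of $\mathcal{G}$ is not destroyed in the limit — and that $\bigcup_k \mathcal{G}_k$ genuinely recovers $\mathcal{G}$; this requires a careful bookkeeping of how the bounded pants/hexagon decomposition interacts with the cap-off surgery, which is the portion I would write out in full detail in the appendix.
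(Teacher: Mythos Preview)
Your reduction to Theorem~\ref{thm:dual-graph} is right, but both halves of the graph argument have genuine gaps. First, you work only with planarity of $\mathcal{G}$ and miss the decisive structural fact: since $X$ is planar every simple closed curve separates, so the dual pants graph $\mathcal{G}$ is a \emph{tree}. Mere planarity is not enough for your Nash--Williams step; a bounded-degree planar graph with one end can be transient (the $1$-skeleton of any regular hyperbolic tiling), so the claimed bound $|\partial B_n|=O(n)$ is false in that generality. Even restricting to trees, $|\partial B_n|$ need not grow linearly: attach a depth-$k$ binary tree at the $k$-th vertex of a ray and you get a one-ended, degree-$3$ tree with $|\partial B_n|\asymp 2^{n/2}$, yet this tree is recurrent. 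The correct finite-ends argument uses the tree structure directly via the flow criterion: any $u\in\ell^2(E)$ with $\nabla^*u=\delta_x$ vanishes on every edge incident to a leaf, hence (propagating inward) on every finite hanging subtree; along each of the finitely many infinite rays Kirchhoff's law then forces $|u|$ to be constant, and a nonzero constant on an infinite ray is not in $\ell^2$.

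The passage to countably many ends is more seriously broken. You produce subgraphs $\mathcal{G}_k\subset\mathcal{G}$ (by capping all but $k$ ends), show each is recurrent, and invoke monotonicity of effective resistance. But Rayleigh monotonicity goes the wrong way: subgraphs have \emph{higher} resistance, so recurrence of every $\mathcal{G}_k$ says nothing about $\mathcal{G}$. (The binary tree is transient, yet capping all but finitely many of its ends always yields a recurrent tree; more crudely, every transient graph is an increasing union of finite, hence recurrent, subgraphs.) The paper instead argues directly on the tree $\mathcal{G}$ using the Cantor--Bendixson filtration of the countable (hence scattered) end space: the flow $u$ is first shown to vanish on the rays corresponding to isolated points of the first derived set $\mathcal{E}(X)'$ (these rays have only finite trees attached, so the finite-ends argument applies verbatim), then on the rays corresponding to isolated points of the second derived set (which, after the first step, again have only finite pieces attached where $u$ is already zero), and so on through the countable Cantor--Bendixson rank until $u\equiv 0$.
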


Lyons-McKean \cite{LMcK} and McKean-Sullivan \cite{McKS} proved that the class-surface covering of the thrice-punctured sphere is not a parabolic Riemann surface. The class-surface has one topological end, and it contains fins that have an unbounded injectivity radius, which implies that the surface does not have bounded geometry.

We find examples (described in terms of shears on ideal triangulations) of infinite flute surfaces with bounded geometry that are not parabolic and have covering Fuchsian group of the first kind, answering a question of Sullivan (see Theorem \ref{thm:nonparabolic-bdd-geom}).
\vskip .2 cm

\noindent
{\bf Acknowledgements.} We are grateful to Dennis Sullivan for many inspirational conversations. We are also grateful to Ara Basmajian, Vladimir Markovi\' c, Sergiy Merenkov, Enrique Pujals, and Nick Vlamis for the various discussions. 

\section{Preliminaries}
\label{sec:foliations}

Let $X=\mathbb{H}/\Gamma$ be a Riemann surface, where $\mathbb{H}$ is the hyperbolic plane and $\Gamma$ is a Fuchsian group. We assume that $X$ is {\it infinite}, i.e., the covering group $\Gamma$ is not finitely generated. 

A {\it holomorphic quadratic differential} $\varphi$ on a Riemann surface $X$ is an assignment of a holomorphic function $\varphi (z)$ in each coordinate chart $z$ such that $\varphi (z)dz^2$ remains invariant under the coordinate change. A holomorphic  quadratic differential $\varphi$ is of {\it finite-area} if $$\int_X|\varphi |<\infty,$$ where the absolute value $|\varphi (z)||dz^2|$ is an area form.

Points of $X$ where $\varphi \neq 0$ are called {\it regular points} of $\varphi$. 
A holomorphic quadratic differential $\varphi$ on a hyperbolic Riemann surface $X$ defines a natural parameter in a neighborhood of its regular points as follows. Consider a local chart $z$ of a neighborhood of a regular point $P\in X$, where $z_0$ corresponds to $P$ and $\varphi (z)\neq 0$ in the whole chart. Then $\sqrt{\varphi (z)}$ is a well-defined (up to a sign) holomorphic function in the whole chart, and the {\it natural parameter} is given by
$$
w(z)=u+iv=\int_{z_0}^z\sqrt{\varphi (z)}dz .
$$
Different choices of the local chart $z$, the base point $P$, and the square root give a different natural parameter $w_1$. However, we have $w_1=\pm w+const$ on the intersection of the charts.  Therefore, the horizontal and vertical lines in the $w$-parameter are mapped onto the horizontal and vertical lines in the $w_1$-parameter. A {\it horizontal arc} of $\varphi$ is an arc on $X$ that is the preimage of a horizontal line in the natural parameter of $\varphi$. The image of the natural parameter in a neighborhood of a point where $\varphi$ is not zero is foliated by horizontal arcs.

A {\it horizontal trajectory} of $\varphi$ is a maximal horizontal arc. A horizontal trajectory can either be closed or open. If it is open, then it can accumulate to a zero of $\varphi$ in either direction. Analogously, a {\it vertical arc} is the preimage of a vertical line in the natural parameter, and a {\it vertical trajectory} is a maximal vertical arc.

The {\it horizontal foliation} of $\varphi$ on $X$ is a foliation whose leaves are horizontal trajectories and the transverse measure is given by $\int_{\alpha}|Im(\sqrt{\varphi (z)}dz)|=\int_{\alpha}|dv|$, where $\alpha$ is a differentiable arc transverse to the horizontal leaves. At a zero of $\varphi$ of order $k>0$, called a {\it singular point} of $\varphi$, the horizontal foliation is strictly speaking not a foliation but instead has a well-known structure of $(k+2)$-pronged singularity (see \cite{Strebel}).

We recall a notion of a partial measured foliation from \cite{Saric23} whose definition is motivated by the paper of Gardiner and Lakic (see \cite{GardinerLakic}). For the purposes of this paper we need a slightly less general definition than in \cite{Saric23}.

\begin{definition}
Let $X=\mathbb{H}/\Gamma$ be an infinite Riemann surface such that $\Gamma$ is of the first kind.
A {\it partial measured foliation} $\mathcal{F}$ on $X$ is an assignment of a collection of sets $\{ U_i\}_i$ of $X$ (which do not have to cover the whole surface $X$) and differentiable real-valued functions 
$$
v_i:U_i\to\mathbb{R}
$$
with surjective tangent map at each point. 
The sets $U_i$ are closed Jordan domains with piecewise differentiable Jordan curves on their boundaries.
By the Implicit Function Theorem, the pre-image $v_i^{-1}(c)$ for $c\in\mathbb{R}$ is either a connected differentiable arc (possibly with endpoints) or empty, and 
\begin{equation}
\label{eq:coord-change}
v_i=\pm v_j+const
\end{equation} 
on $U_i\cap U_j$. 
\end{definition}

\begin{rem}
When $U_i\cap U_j$ has non-empty interior, the condition (\ref{eq:coord-change}) is a standard condition as in the case of the transition maps for the natural parameters of holomorphic quadratic differentials. When $U_i\cap U_j=\gamma$ is a differentiable arc, the condition (\ref{eq:coord-change}) imposes the equality of the transverse measures on $\gamma$ coming from $U_i$ and $U_j$. However, we do not require that $v_i:U_i\to\mathbb{R}$ extends to a differentiable function in an open neighborhood of $\gamma$. In fact, the functions $\{v_i\}_i$ glue to a piecewise differentiable function around $\gamma$. 
\end{rem}

For a partial measured foliation $\mathcal{F}$, a {\it horizontal arc} is a curve in $X$ that is a connected union of $v_i^{-1}(c_i)$ for some finite or infinite choice of indices $i$ and real numbers $c_i$. A {\it horizontal trajectory} of $\mathcal{F}$ is a maximal horizontal arc, including the possibility of a closed trajectory.

Given a closed curve $\gamma$ on $X$, the {\it height} of (the homotopy class) of $\gamma$ with respect to a partial foliation $\mathcal{F}$ is given by
$$
h_{\mathcal{F}}(\gamma )=\inf_{\gamma_1} \int_{\gamma_1}|dv|
$$
where the infimum is over all closed curves $\gamma_1$ in $X$ homotopic to $\gamma$. The integration of the form $|dv|$ is independent of the chart by (\ref{eq:coord-change}) and if a subarc $\gamma_2$ of $\gamma_1$  does not intersect the support of $\mathcal{F}$ we set $\int_{\gamma_2}|dv|=0$. 

If $Y$ is a subsurface of $X$ and $\gamma\subset Y$, then we define
$$
h_{\mathcal{F},Y}(\gamma )=\inf_{\gamma_1}\int_{\gamma_1}|dv|
$$
where the infimum is over all closed curves $\gamma_1$ in $Y$ that are homotopic to $\gamma$. 

\begin{definition}
A horizontal trajectory of $\mathcal{F}$ that limits to a point in $X$ is called a singular trajectory.
A partial measured foliation $\mathcal{F}$ on $X$ is said to be proper if, except for countably many singular trajectories, the lift to the universal cover $\mathbb{H}$ of each horizontal trajectory accumulates to an ideal endpoint on $\partial\mathbb{H}$ on each end and the two points of the accumulation are distinct. \end{definition}

If $\mathcal{F}$ is a proper partial foliation of $X$, then the lift $\tilde{\mathcal{F}}$ to $\mathbb{H}$ is a proper partial measured foliation of $\mathbb{H}$. Its set of horizontal leaves is invariant under $\Gamma$.
Each non-singular horizontal leaf of $\tilde{\mathcal{F}}$ has precisely two endpoints, and it can be homotoped to a hyperbolic geodesic of $\mathbb{H}$ modulo ideal endpoints on $\partial\mathbb{H}$. 
We denote by $G(\tilde{\mathcal{F}})$ the closure of the set of geodesics obtained by replacing the non-singular horizontal trajectories of $\tilde{\mathcal{F}}$ with the hyperbolic geodesics that share the same endpoints on $\partial\mathbb{H}$. Note that $G(\tilde{\mathcal{F}})$ is a geodesic lamination in the hyperbolic plane $\mathbb{H}$.

By repeating the arguments in \cite[\S 3.2]{Saric-heights}, the transverse measure to $\mathcal{F}$  induces a transverse measure to $\tilde{\mathcal{F}}$ which in turn induces a transverse measure on $G(\tilde{\mathcal{F}})$. Denote by $\mu_{\tilde{\mathcal{F}}}$ the induced measured lamination in $\mathbb{H}$ and note that it is invariant under the action of $\Gamma$. Therefore the measured lamination $\mu_{\tilde{\mathcal{F}}}$ induces a measured lamination $\mu_{{\mathcal{F}}}$ on $X$.

We assume that the collection  $\{ U_i\}_i$ defining the partial measured foliation $\mathcal{F}$ is locally finite in $\cup_iU_i$, i.e., every compact set in $\cup_iU_i$ intersects only finitely many sets of the collection. 

The Dirichlet integral of $v_i$ is given by $\int_{U_i}[(\frac{\partial v_i}{\partial x})^2+ (\frac{\partial v_i}{\partial y})^2]dxdy$ and by (\ref{eq:coord-change}) we have
$$
\int_{U_i\cap U_j}\Big{[}\Big{(}\frac{\partial v_i}{\partial x}\Big{)}^2+ \Big{(}\frac{\partial v_i}{\partial y}\Big{)}^2\Big{]}dxdy=\int_{U_i\cap U_j}\Big{[}\Big{(}\frac{\partial v_j}{\partial x}\Big{)}^2+ \Big{(}\frac{\partial v_j}{\partial y}\Big{)}^2\Big{]}dxdy.
$$
Using the partition of unity on $X$, the Dirichlet integral $D(\mathcal{F})$ of $\mathcal{F}$ over $X$ is well-defined. If the integration is over a subsurface $Y$ of $X$, denote the corresponding Dirichlet integral by $D_Y(\mathcal{F})$. 

\begin{definition}
A proper partial measured foliation $\mathcal{F}$ on $X$ is called an integrable foliation if $D(\mathcal{F})=D_X(\mathcal{F})<\infty$.
\end{definition}

Consider a finite-area holomorphic quadratic differential $\varphi$ on a hyperbolic Riemann surface $X=\mathbb{H}/\Gamma$  where $\Gamma$ is an arbitrary Fuchsian group (including the possibility of a trivial group, i.e. $\Gamma =\{ id\}$). If $w=u+iv$ is a natural parameter of $\varphi$ with the coordinate chart $U$, then $v:U\to\mathbb{R}$ defines the horizontal foliation $\mathcal{F}_{\varphi}$ of the differential $\varphi$. The horizontal foliation $\mathcal{F}_{\varphi}$ is a proper partial foliation because each non-singular horizontal trajectory of $\varphi$ when lifted to $\mathbb{H}$ accumulates to a single point in $\partial \mathbb{H}$ in each direction. The two limit points are different (see \cite{MardenStrebel}, \cite{Strebel}).

Since $\int_U|\varphi (z)|dxdy=\int_{w(U)}dudv=\int_{w(U)}[(\frac{\partial v}{\partial u})^2+ (\frac{\partial v}{\partial v})^2]dudv$ and by the invariance of the Dirichlet integral under conformal maps, we get
$$
\int_U|\varphi (z)|dxdy=\int_{U}\Big{[}\Big{(}\frac{\partial v}{\partial x}\Big{)}^2+ \Big{(}\frac{\partial v}{\partial y}\Big{)}^2\Big{]}dxdy =D_U(\mathcal{F}_{\varphi}).
$$
Therefore if $\int_X|\varphi (z)|dxdy<\infty$ we have $D(\mathcal{F}_{\varphi})<\infty$, i.e. $\mathcal{F}_{\varphi}$ is an integrable partial foliation.
 
 We proved in \cite{Saric23} that the converse of the above statement is true in the following sense when $\Gamma$ is of the first kind, i.e., when the limit set of the action of $\Gamma$ on $\mathbb{H}$ is $\partial\mathbb{H}$. 
 
 \begin{definition}
 A measured geodesic lamination $\mu$ on a Riemann surface $X$ is said to be realizable by an integrable partial foliation if there is an integrable partial foliation $\mathcal{F}$ such that $\mu_{\mathcal{F}}=\mu$.
 \end{definition}
 
Let ${ML}_{int}(X)$ be the space of all measured geodesic laminations on the Riemann surface $X$ that are realizable by integrable partial foliations. Denote by $A (X)$  the space of all finite-area holomorphic quadratic differentials on $X$. 

\begin{thm}[see \cite{Saric23}]
\label{thm:main}
Let $X=\mathbb{H}/\Gamma$ be an infinite Riemann surface, where $\Gamma$ is a Fuchsian group of the first kind. The map
$
A(X)\to {ML}_{int}(X)
$ defined by
$$\varphi\mapsto \mu_{\varphi}$$ obtained by straightening the trajectories of the horizontal foliation of $\varphi$ is a bijection.
\end{thm}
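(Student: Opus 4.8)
The plan is to treat the three parts of the statement in turn --- that $\varphi\mapsto\mu_\varphi$ takes values in ${ML}_{int}(X)$, that it is injective, and that it is surjective --- with only the last requiring substantial work.

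\emph{Well-definedness and injectivity.} Well-definedness is essentially recorded in \S \ref{sec:foliations}: for $\varphi\in A(X)$ the horizontal foliation $\mathcal{F}_\varphi$ is a proper partial measured foliation with $D(\mathcal{F}_\varphi)=\int_X|\varphi|<\infty$, hence integrable, so $\mu_\varphi=\mu_{\mathcal{F}_\varphi}\in {ML}_{int}(X)$. For injectivity I would use that $\mu_\varphi$ determines the intersection numbers $i(\gamma,\mu_\varphi)=h_\varphi(\gamma)$ for every closed curve $\gamma$ on $X$, together with the uniqueness half of the heights theorem for finite-area holomorphic quadratic differentials on arbitrary Riemann surfaces (see \cite{MardenStrebel}, \cite{Saric-heights}): such a differential is determined by its heights function, so $\mu_{\varphi_1}=\mu_{\varphi_2}$ forces $\varphi_1=\varphi_2$.

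\emph{Surjectivity: an exhaustion construction.} Fix $\mu\in {ML}_{int}(X)$ and an integrable partial foliation $\mathcal{F}$ with $\mu_{\mathcal{F}}=\mu$ and $D(\mathcal{F})<\infty$. Exhaust $X$ by compact subsurfaces $Y_1\subset\subset Y_2\subset\subset\cdots$ with $\bigcup_nY_n=X$ and piecewise-geodesic boundary. On each bordered surface $Y_n$, equipped with the conformal structure inherited from $X$, the transverse measure of $\mathcal{F}$ defines a measured foliation class; applying the Hubbard--Masur theorem for compact surfaces with boundary (for instance by doubling across $\partial Y_n$, \cite{HubbardMasur}) produces a holomorphic quadratic differential $\varphi_n$ on $Y_n$ whose horizontal foliation is homotopic in $Y_n$ to $\mathcal{F}|_{Y_n}$. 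By the minimal-norm (Reich--Strebel) property of Hubbard--Masur representatives, $\int_{Y_n}|\varphi_n|\le D_{Y_n}(\mathcal{F})\le D(\mathcal{F})$; hence the $\varphi_n$ have $L^1$-norms uniformly bounded by $D(\mathcal{F})$ over every fixed $Y_m$. Since $|\varphi_n|$ is subharmonic, this gives locally uniform bounds on the $\varphi_n$, and a diagonal subsequence converges locally uniformly on $X$ to a holomorphic quadratic differential $\varphi$ with $\int_X|\varphi|\le D(\mathcal{F})<\infty$, so $\varphi\in A(X)$.

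\emph{Surjectivity: identifying the limit, and the main obstacle.} It remains to prove $h_\varphi(\gamma)=i(\gamma,\mu)=h_{\mathcal{F}}(\gamma)$ for every closed geodesic $\gamma$; granting this, $\mathcal{F}_\varphi$ and $\mathcal{F}$ carry the same transverse-measure data, straighten to the same geodesic lamination, and therefore $\mu_\varphi=\mu$. One inequality is straightforward: any fixed closed curve homotopic to $\gamma$ is, once it lies in $Y_n$, an admissible competitor for the height of $\gamma$ with respect to $\varphi_n$ in $Y_n$, and the corresponding $\varphi_n$-transverse measure converges to the $\varphi$-one; taking the infimum over such curves gives $h_\varphi(\gamma)\ge\limsup_n h_{\varphi_n}(\gamma)=\lim_n h_{\mathcal{F},Y_n}(\gamma)=h_{\mathcal{F}}(\gamma)=i(\gamma,\mu)$. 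The reverse inequality $h_\varphi(\gamma)\le h_{\mathcal{F}}(\gamma)$ --- equivalently, that no transverse measure is lost to the ends of $X$ in the normal-family limit --- is the main obstacle. I would establish it by analyzing near-minimizing curves for $h_{\varphi_n}(\gamma)$ and using the uniform area bound $\int_{Y_n}|\varphi_n|\le D(\mathcal{F})$ to confine their essential portions to a fixed compact subsurface, yielding $h_\varphi(\gamma)\le\liminf_n h_{\varphi_n}(\gamma)$; alternatively, with $\varphi\in A(X)$ already satisfying $h_\varphi\ge i(\cdot,\mu)$ and $\int_X|\varphi|\le D(\mathcal{F})$, one can compare $\varphi$ with the $\varphi_n$ through the quantitative Marden--Strebel heights inequality to force equality of all heights. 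It is precisely here that the finiteness $D(\mathcal{F})<\infty$ is used --- to prevent the escape of mass --- as well as the first-kind hypothesis, which guarantees that the straightened objects are honest measured geodesic laminations on $X$ so that the heights theorem is available.
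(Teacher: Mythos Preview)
This theorem is not proved in the present paper: it is quoted in \S\ref{sec:foliations} as a preliminary result from \cite{Saric23}, with no proof given here. The Appendix proves only the extension Theorem~\ref{thm:realization-arbitrary} to arbitrary Fuchsian groups, and that argument \emph{assumes} the first-kind case by reducing to it (doubling $X$ across its border and puncturing to force a first-kind covering group, then invoking \cite[Theorem~1.6]{Saric23}). So there is no ``paper's own proof'' of the statement you were asked about to compare your proposal against.

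That said, your outline is the standard one and is essentially the strategy of \cite{Saric23}: compact exhaustion, Hubbard--Masur on each piece, uniform $L^1$ bound via minimal norm, normal-family limit, and identification of heights. Your handling of well-definedness and injectivity is fine. On surjectivity, your ``easy'' inequality is correctly argued and yields $h_\varphi(\gamma)\ge i(\gamma,\mu)$. For the reverse inequality you are right to flag it as the main obstacle; note that the paper itself (in the Remark following \eqref{eq:est-3-ineq}) records Strebel's warning that $\limsup_n h_{\varphi_n}\le h_\varphi$ can fail for general locally-uniformly convergent sequences, so one genuinely needs the special monotone structure of the approximating foliations (or a Marden--Strebel type second-minimal-norm argument) rather than soft compactness alone. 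Your two suggested routes for closing this gap are both viable and in the spirit of \cite{Saric23} and \cite{Saric-heights}; just be aware that ``confining near-minimizers to a fixed compact'' requires real work and is where the finite-Dirichlet hypothesis is used in an essential way.
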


A Riemann surface $X=\mathbb{H}/\Gamma$ is said to be {\it parabolic} if it does not support a Green's function-i.e., there is no positive harmonic function on $X\setminus\{\zeta\}$ that has a singularity of the form $-\log |z-\zeta |$ in a neighborhood of $\zeta\in X$ expressed in a local chart and tend to $0$ as $z\to\partial_{\infty}X$ (see Ahlfors-Sario \cite{AhlforsSario}). This gives a natural classification of infinite Riemann surfaces: $X$ is of parabolic type, i.e. $X\in O_G$, or $X$ is not of parabolic type, i.e. $X\notin O_G$.
We proved

\begin{thm}[see \cite{Saric23}]
\label{thm:par-ch-cross-cuts}
Let $X=\mathbb{H}/\Gamma$ be a Riemann surface with $\Gamma$ of the first kind. Then $X$ is of parabolic type if and only if the set of horizontal trajectories of each finite-area holomorphic quadratic differential on $X$ that are cross-cuts is of zero measure for the area form induced by the quadratic differential.
\end{thm}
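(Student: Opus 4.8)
The plan is to combine two facts recalled in the introduction with an ergodic-theoretic analysis of the horizontal flow of a single finite-area differential. Recall that $X\in O_G$ if and only if the geodesic flow on $T^1X$ is ergodic for the Liouville measure (Ahlfors--Sario \cite{AhlforsSario}, Sullivan \cite{Sullivan}; see also \cite{hopf}), and that, by \cite[Theorem~1.1]{Saric23}, the geodesic flow on $T^1X$ is ergodic if and only if for every $\varphi\in A(X)$ almost every horizontal trajectory of $\varphi$ is recurrent for the area form $|\varphi|$. Thus it suffices to prove, for a fixed $\varphi\in A(X)$, that the cross-cut trajectories of $\varphi$ form a $|\varphi|$-null set if and only if $|\varphi|$-almost every horizontal trajectory of $\varphi$ is recurrent. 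Granting this, the ``only if'' part of the theorem follows from the ergodicity criterion of \cite[Theorem~1.1]{Saric23} together with the implication ``$|\varphi|$-a.e.\ trajectory recurrent $\Rightarrow$ cross-cut trajectories $|\varphi|$-null'' applied to every $\varphi$; and the ``if'' part follows from the contrapositive of the converse implication applied to a single $\varphi$ whose horizontal foliation has a positive-$|\varphi|$-measure non-recurrent set, which exists when the geodesic flow is non-ergodic, again by \cite[Theorem~1.1]{Saric23}.

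One implication is easy: a cross-cut trajectory tends to the ideal boundary of $X$ in each of its two directions, hence leaves every compact subset of $X$ and is in particular not recurrent; so the cross-cut trajectories are contained in the non-recurrent ones, and if the latter are $|\varphi|$-null so are the former. For the converse I would argue as follows. Remove from $X$ the critical graph of $\varphi$, that is, the zeros of $\varphi$ together with the singular horizontal trajectories; there are only countably many of these, so the critical graph is $|\varphi|$-null. On the complement the horizontal straight-line flow $h_t$, i.e.\ translation in the natural parameter, is defined and preserves the finite measure $|\varphi|$. Where $h_t$ is complete, Poincar\'e recurrence shows that $|\varphi|$-almost every orbit is recurrent; where $h_t$ is incomplete, the orbit reaches the ideal boundary of $X$ in finite $\varphi$-length in at least one direction --- it cannot accumulate on a zero, for then it would be a singular trajectory and already removed --- and these are precisely the cross-cut trajectories. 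Hence the non-recurrent trajectories coincide, up to a $|\varphi|$-null set, with the cross-cut trajectories, which yields the converse.

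The step I expect to be the main obstacle is this analysis of the horizontal flow on an infinite Riemann surface carrying a finite-area differential. One must check that the critical graph is genuinely $|\varphi|$-null even when the zeros of $\varphi$ accumulate, and one must identify the incompleteness locus of $h_t$ with the union of cross-cut trajectories, using $\int_X|\varphi|<\infty$ in an essential way --- for instance to rule out a positive-measure family of complete orbits of infinite $\varphi$-length that fail to recur (such a family would sweep out infinite area), and to exclude the bi-infinite strip domains of Strebel's theory of finite surfaces, which have infinite area. This is a finite-area analogue of Strebel's trajectory-structure theorem beyond finite surfaces, most naturally proved by exhausting $X$ by finite subsurfaces and passing to the limit with the area bound as uniform control. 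For the ``if'' direction there is also a more constructive alternative that avoids \cite[Theorem~1.1]{Saric23}: when $X\notin O_G$ the extremal distance from a compact part of $X$ to its ideal boundary is finite, the corresponding family of arcs running to the ideal boundary defines a partial measured foliation of finite Dirichlet integral, and Theorem~\ref{thm:realization-arbitrary} realises it by some $\varphi\in A(X)$ that foliates a region of positive area by trajectories escaping to an end; the delicate point is then to confirm that these trajectories remain cross-cuts once the foliation is completed to a global one.
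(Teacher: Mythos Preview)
The paper does not prove this theorem; it is quoted from \cite{Saric23} as background. So there is no ``paper's own proof'' to compare against. What can be said is whether your reduction is a genuine proof or merely a restatement.

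Your main move is to invoke \cite[Theorem~1.1]{Saric23} (``geodesic flow ergodic $\Leftrightarrow$ for every $\varphi\in A(X)$ almost every horizontal trajectory is recurrent'') and then argue that ``a.e.\ trajectory recurrent'' is equivalent to ``cross-cuts have $|\varphi|$-measure zero''. But in the source \cite{Saric23} these two statements are the \emph{same} theorem, phrased two ways: a trajectory is called recurrent precisely when it is not a cross-cut (does not leave every compact set). Since $X\in O_G$ is already known to be equivalent to ergodicity of the geodesic flow, Theorem~\ref{thm:par-ch-cross-cuts} \emph{is} \cite[Theorem~1.1]{Saric23}. Your proposal therefore assumes what is to be proved; the hard content---linking the behaviour of horizontal trajectories of \emph{all} finite-area differentials to parabolicity of $X$---is exactly what you are citing.

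Separately, your Poincar\'e-recurrence discussion, read as an independent argument, would prove too much. If the horizontal flow were complete off a null set, Poincar\'e recurrence for the finite measure $|\varphi|$ would force the cross-cut set to be null for \emph{every} $\varphi\in A(X)$ on \emph{every} $X$ with $\Gamma$ of the first kind, contradicting the theorem for $X\notin O_G$. The resolution is that cross-cuts on such surfaces typically have \emph{finite} $\varphi$-length: they escape to an end in finite time, so they sit in the incompleteness locus of the flow and Poincar\'e recurrence never sees them. You acknowledge this, but then need the identification ``incompleteness locus $=$ cross-cut set up to null sets'', which does not follow from what you wrote: a trajectory could escape forward in finite $\varphi$-length yet be backward-complete and recurrent, and you have not ruled out a positive-measure set of such half-escaping trajectories.

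Your final paragraph---building, when $X\notin O_G$, a finite-Dirichlet partial foliation from the extremal-distance data and realising it via Theorem~\ref{thm:realization-arbitrary}---is closer in spirit to how the result is actually obtained in \cite{Saric23} (and to how it is used in the Appendix here), but as you note, the step confirming that the realised differential still has a positive-measure family of escaping trajectories is exactly the nontrivial part.
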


The above theorem is equivalent to

\begin{thm}[see \cite{Saric23}]
\label{thm:non-parabolic-criteria}
Let $X=\mathbb{H}/\Gamma$ be a Riemann surface with $\Gamma$ of the first kind. Then $X$ is not of parabolic type if and only if there is a measured lamination $\mu\in {ML}_{int}(X)$ such that the $\mu$-measure of the geodesics of the support of $\mu$ leaving every compact subset of $X$ is positive.
\end{thm}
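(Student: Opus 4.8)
The plan is to read off Theorem~\ref{thm:non-parabolic-criteria} from Theorem~\ref{thm:par-ch-cross-cuts} by taking the contrapositive and transporting it along the bijection $A(X)\to ML_{int}(X)$, $\varphi\mapsto\mu_\varphi$, of Theorem~\ref{thm:main}. Negating Theorem~\ref{thm:par-ch-cross-cuts}, the surface $X$ is not of parabolic type if and only if there is a finite-area holomorphic quadratic differential $\varphi$ on $X$ such that the union of those non-singular horizontal trajectories of $\varphi$ which are cross-cuts has positive $|\varphi|$-area. So it suffices to show that, under $\varphi\mapsto\mu_\varphi$, this is equivalent to the assertion that the sublamination of $\mathrm{supp}(\mu_\varphi)$ consisting of leaves which leave every compact subset of $X$ carries positive mass for the natural measure of $\mu_\varphi$ (locally the transverse measure times arclength along leaves); by surjectivity of $\varphi\mapsto\mu_\varphi$ one may then equally well start from an arbitrary $\mu\in ML_{int}(X)$, which is the form in which the theorem is stated.

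First I would set up the local dictionary. Off the zeros, the natural parameter $w=u+iv$ identifies a neighborhood with a Euclidean rectangle in which the horizontal trajectories are horizontal segments, $|\varphi|$ is Lebesgue measure $du\,dv$, and the transverse measure of $\mathcal F_\varphi$ along a vertical segment is $dv$. Thus any subset of such a rectangle that is a union of entire trajectory-segments has the form $I\times C$ with $I$ a nondegenerate $u$-interval and $C\subset\mathbb{R}$ Borel, with $|\varphi|$-area $|I|\cdot|C|$ and transverse measure $|C|$. Since a defining family of natural-parameter charts may be chosen locally finite with widths $|I|$ bounded below on it, a $\Gamma$-invariant union of horizontal trajectories has zero $|\varphi|$-area if and only if it has zero transverse measure; the same holds verbatim for the straightened geodesic lamination, using flow boxes in which the natural measure is hyperbolic arclength times transverse measure, the arclength factor being bounded below on any compact piece. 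Because straightening is, off the countably many singular trajectories and their limits, a transverse-measure-preserving bijection of leaf spaces (Theorem~\ref{thm:main}, \cite{Saric-heights}), it carries null transverse-measure sets to null transverse-measure sets in both directions.

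Next I would match up the two families of leaves. By properness of $\mathcal F_\varphi$ every non-singular horizontal trajectory lifts to an arc with two distinct endpoints on $\partial\mathbb{H}$ and straightens to the geodesic with those endpoints; trajectory and straightening stay a bounded Hausdorff distance apart (see \cite{MardenStrebel}, \cite{Saric-heights}), so a trajectory leaves every compact subset of $X$ in a given direction exactly when the associated geodesic leaf does. Moreover, by the Hopf dichotomy (\cite{hopf}; see also \cite{Saric23}), for $|\varphi|$-almost every leaf---equivalently, $\mu_\varphi$-almost every leaf---the two ends behave alike, both recurrent or both escaping; hence, up to a null set, the cross-cuts of $\varphi$ are precisely the transient trajectories, and these correspond under straightening precisely to the geodesic leaves of $\mathrm{supp}(\mu_\varphi)$ that leave every compact subset of $X$. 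Feeding this identification into the local dictionary yields the equivalence of ``the cross-cut trajectories of $\varphi$ have positive $|\varphi|$-area'' with ``the escaping geodesic leaves of $\mu_\varphi$ have positive $\mu_\varphi$-mass'', and with it the theorem.

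The main obstacle is the measure-theoretic bookkeeping in the last step: controlling the set of non-singular trajectories whose two ends behave differently (one escaping, the other recurrent) and checking that straightening genuinely preserves null transverse-measure sets on the escaping part, where the flat and hyperbolic metrics are not a priori comparable. I expect to dispatch the first point with the Hopf dichotomy for the relevant flow, and the second by exhausting the transient locus by compact pieces on which the two arclength elements are comparable, reducing the comparison of measures to the compact situation already handled in \cite{Saric23}.
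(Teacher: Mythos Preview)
The paper does not give its own proof of this statement: Theorem~\ref{thm:non-parabolic-criteria} is quoted from \cite{Saric23} as a preliminary result, and the only justification offered in the present paper is the sentence ``The above theorem is equivalent to'', linking it to Theorem~\ref{thm:par-ch-cross-cuts}. Your proposal is precisely an attempt to flesh out that claimed equivalence, and the overall strategy---contrapose Theorem~\ref{thm:par-ch-cross-cuts} and transport along the bijection $\varphi\mapsto\mu_\varphi$ of Theorem~\ref{thm:main}---is the natural and correct one, and is exactly what the paper's one-line remark is gesturing at.

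One remark on the execution. Your appeal to ``the Hopf dichotomy'' to dispose of leaves that escape at one end but recur at the other is not quite the right citation. Hopf's dichotomy concerns the geodesic flow on $T^1X$ with respect to Liouville measure, not the transverse measure of a particular lamination $\mu_\varphi$; there is no a priori reason a $\mu_\varphi$-typical leaf should be Liouville-typical. The fact you actually need---that for a finite-area holomorphic quadratic differential the set of horizontal trajectories escaping at exactly one end has zero $|\varphi|$-area---is true, but it comes from the structure theory of trajectories (the decomposition of the surface into ring domains, spiral sets, and strip domains; see Strebel \cite{Strebel}, \S 11--13, and the trajectory analysis in \cite{Saric23}) rather than from Hopf. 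With that substitution your argument goes through, and it matches what the paper asserts without supplying details.
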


\section{Realizing measured geodesic laminations by foliations of unions of annuli and rectangles} 
\label{sec:const-rect}

A Riemann surface $X$ is of {\it bounded geometry} if the injectivity radius is bounded between two positive constants at all points outside of the horodisk neighborhoods of all punctures with boundary horocycle of length $1$ (see \cite{Kinjo}, \cite{BPV}). Kinjo \cite{Kinjo} proved that any Riemann surface $X$ with bounded geometry contains countably many pairwise disjoint simple closed geodesics $\{\alpha_n\}_n$, called {\it cuffs}, whose lengths are between two positive constants such that the connected components of $X\setminus\cup_n\alpha_n$ are either geodesic pairs of pants $\{ P_j\}_j$ with cuffs either in $\{\alpha_n\}_n$ or punctures, and planar components that are decomposed into right-angled hyperbolic hexagons $\{ H_k\}_k$ whose three sides are orthogonal to the cuffs and other three sides are on the cuffs. Moreover, the lengths of the sides of the hexagons are between two positive constants. We note that a cuff may not be on the boundary of a pair of pants; it may connect two planar parts that are decomposed into hexagons. From now on, a {\it planar part} of $X$ refers to the component of $X\setminus\cup_n\alpha_n$ that is decomposed into the hexagons with bounded side lengths.

\begin{figure}[h]
\leavevmode \SetLabels
\endSetLabels
\begin{center}
\AffixLabels{\centerline{\epsfig{file =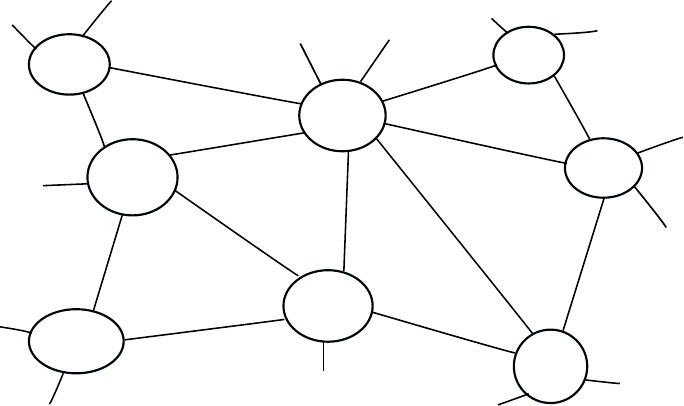,width=10cm,angle=0}}}
\vspace{-20pt}
\end{center}
\label{pants}
\caption{The planar part; closed curves are cuffs, and hexagons have every other side orthogonal to cuffs.} 
\end{figure}

Each cuff can meet only finitely many hexagons, and there is an upper bound on the number of hexagons it can meet since cuffs and sides of the hexagons have lengths between two positive constants. The boundary of each planar component consists of the cuffs of the decomposition (see Figure 1). A planar component is of finite type if its boundary consists of finitely many cuffs or of infinite type if its boundary consists of infinitely many cuffs.  
A pair of pants or another planar part is attached to each cuff on the boundary of a planar part of the surface in the above decomposition.

Let $\mu\in ML_{int}(X)$. 
We form the union of annuli and rectangles such that the geodesics of the support of $\mu$ can be homotoped to the interior of the union. 
Let $m>0$ and $m^*>0$ be fixed and small. For each cuff, we consider an $m$-neighborhood where $m$ is smaller than the collar constant for the infimum of the lengths of the cuffs. These $m$-neighborhoods form the (pairwise disjoint) annuli around each cuff. In each pair of pants, the geodesics of the support of $\mu$ connect pairs of cuffs, including the possibility that a cuff is connected to itself. Every time two cuffs are connected, we draw the unique common orthogonal between the cuffs in the pair of pants. There is a positive lower bound on the distance between any two feet of any two orthogonals on each cuff because of the bounds on the lengths of sides of the hexagons.
We take the $ m^*$-neighborhood of each orthogonal and erase the common intersections with the annuli. The obtained region is a rectangle. 
Since all involved lengths are bounded, we can choose $m$ and $ m^*$ small enough such that the interiors of the rectangles and annuli are disjoint. 
The geodesic sublamination of the support of $\mu$ that consists of geodesics entirely contained in the union of the pairs of pants $\{ P_j\}_j$ can be homotoped to a (non-geodesic) lamination in the 
union of annuli and cuffs (see \cite{Sarictt}). 

In addition, we introduce annuli and rectangles in the planar parts of $X$. First, we extend the annuli around the cuffs on the boundary of planar parts of $X$. We take the one-sided $2m$-neighborhoods of the cuffs in the planar parts. In other words, we added an annuli homotopic to each cuff with the inner boundary on the distance $m$ from the cuff and the outer boundary on the distance $2m$. We take the $m^*$-neighborhoods of each side of the hexagons in the planar parts of $X$ that connect the boundary cuffs. The rectangles are obtained by deleting the parts of the neighborhoods of the sides that lie in the annuli around the boundary cuffs. Since the support of any measured lamination is nowhere dense, we can homotope the geodesics of the support of $\mu$ that are not contained in the union of the pairs of pants to a (non-geodesic) lamination contained in the union of the rectangles and annuli. To perform the homotopy in each hexagon, we choose a component $\Delta$ of the complement of the support of $\mu$ inside the hexagon, as in Figure 2.

\begin{figure}[h]
\leavevmode \SetLabels
\endSetLabels
\begin{center}
\AffixLabels{\centerline{\epsfig{file =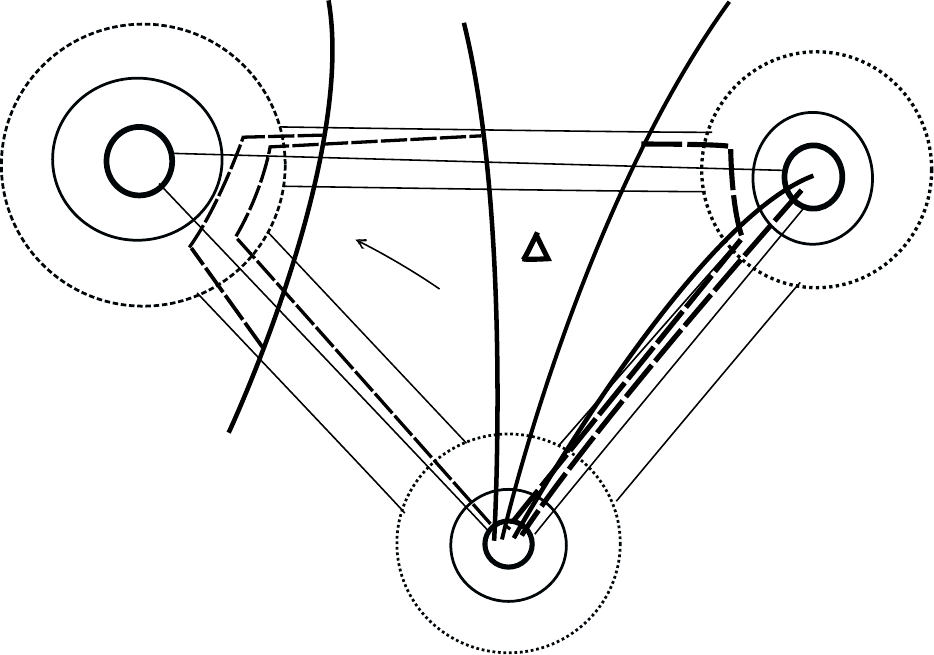,bb= 0 0 350 350,width=8cm,angle=0}}}
\vspace{-20pt}
\end{center}
\label{pants}
\caption{The homotopy moves the intersections of the support of $\mu$ with the hexagons such that the new (non-geodesic) lamination is contained in the union of annuli and rectangles.} 
\end{figure}

The component $\Delta$ is chosen such that if a component of the support of $\mu$ connects two sides of the hexagon that connect cuffs, then $\Delta$ does not separate them. The homotopy pushes the components of the intersection of $\mu$ with the hexagon to the union of rectangles that contain the boundary sides of the hexagon and the outer annuli (distance between $m$ and $2m$ from the cuffs and) concentric to the cuffs. In particular, if a geodesic $g$ of the support of $\mu$ stays in a planar part of $X$, then its image under the homotopy stays in the union of rectangles and outer annuli and does not enter the $m$-neighborhood of the cuffs. If $g$ intersects an edge of a hexagon, then the image $g'$ (of $g$ under the homotopy) either crosses the rectangle that contains the edge or enters the rectangle on one side, which shares with $2m$-neighborhood of a cuff and exists the rectangle from the same side. In the latter case, we homotope $g'$ so it does not enter the rectangle. In the former case, $g$ intersects the opposite sides of the union of two adjacent hexagons, as in Figure 3.

\begin{figure}[h]
\leavevmode \SetLabels
\L(.61*.1) $g$\\
\L(.4*.2) $g'$\\
\endSetLabels
\begin{center}
\AffixLabels{\centerline{\epsfig{file =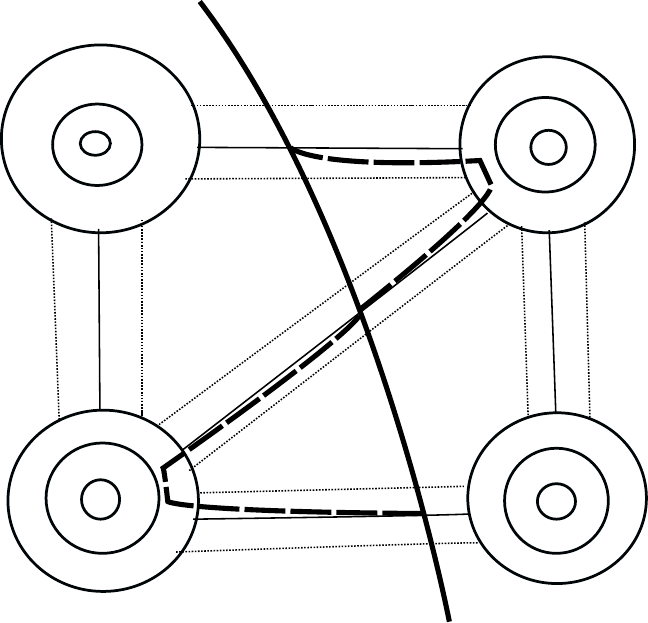,bb= 0 0 300 300,width=10cm,angle=0}}}
\vspace{-20pt}
\end{center}
\label{pants}
\caption{The case when geodesic $g$ crosses a rectangle.} 
\end{figure}

If a geodesic $g$ of the support of $\mu$ enters a planar part through a cuff and never leaves, the same is true for its homotopy image. Finally, if an arc of $g$ connects two cuffs inside the planar part, then this arc of $g$ is homotoped to stay in the union of rectangles and $2m$-neighborhoods of the two cuffs. Again, if $g'$ enters and leaves a rectangle through the same side, we homotope further so that it does not enter the rectangle. 

The geodesic lamination $|\mu |$ inside the union of the pairs of pants $\cup_jP_j$ is homotoped inside the $m^*$-neighborhoods of orthogeodesics to the cuffs and the $m$-neighborhoods of the cuffs according to the construction of the standard Dehn-Thurston train track adjusted to the pairs of pants decomposition for infinite surfaces as in \cite{Sarictt}.

We established

\begin{prop}
\label{prop:ml-support}
Let $X$ be a Riemann surface with bounded geometry and $\mu$ a measured (geodesic) lamination on $X$. Let $\{ P_j\}_j$ and $\{ H_k\}_k$ be the decomposition of $X$ into bounded geodesic pairs of pants and bounded hexagons as above. In each pair of pants, we draw three disjoint orthogeodesics between the cuffs based on the combinatorial position of the support $|\mu |$ of the lamination $\mu$. Denote by $\mathcal{T}$ the union of the $2m$-neighborhoods of cuffs and the $m^*$-neighborhoods of orthogeodesics and sides of hexagons that do not lie on cuffs, where $m>m^*>0$. Then, there is a choice of $m$ and $m^*$ such that the geodesic lamination $|\mu |$ can be homotoped to a lamination whose support is in $\mathcal{T}$. The neighborhoods of the arcs are called rectangles, and a homotoped leaf follows a rectangle if and only if the corresponding geodesic is in the position as in Figure 3. Suppose a geodesic of $|\mu |$ that intersects a planar part does not cross a boundary cuff (of the planar part). In that case, the homotoped geodesic can only enter the concentric annulus around that cuff whose two boundaries are $2m$ and $m$ distance away from the cuff as in Figures 2 and 3. 
\end{prop}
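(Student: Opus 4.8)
\emph{Overview and Stage 1 (fixing the constants).} This proposition collects the construction just carried out, so my plan is to organize it into three stages, the crux being that the numerical choices can be made uniformly over the infinite surface. First I would record the consequences of bounded geometry: by Kinjo's decomposition the lengths of the cuffs $\alpha_n$ and of the sides of the hexagons $H_k$ all lie in a fixed interval $[\ell_0,\ell_1]\subset(0,\infty)$; the collar lemma then provides a uniform width $w_0>0$ of an embedded collar about every cuff, and the lower bound $\ell_0$ forces a uniform positive lower bound on the distance along a cuff between the feet of any two of the finitely many orthogeodesics (drawn in the abutting pair of pants) and hexagon sides meeting that cuff --- their number being itself bounded above in terms of $\ell_1/\ell_0$. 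I would then choose $0<m^*<m<w_0/4$ small relative to all of these bounds, so that the $2m$-neighborhoods of the cuffs, the $m^*$-neighborhoods of the drawn orthogeodesics, and the $m^*$-neighborhoods of the hexagon sides not lying on cuffs become pairwise disjoint in their interiors, uniformly over every piece of the decomposition. This is the one step where bounded geometry is indispensable.

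\emph{Stage 2 (the homotopy, piece by piece).} Inside each pair of pants $P_j$ I would straighten the arcs of $|\mu|$ joining two cuffs (possibly a cuff to itself) via the Dehn--Thurston train track adapted to the infinite pants decomposition of \cite{Sarictt}, placing them into the union of the $m^*$-neighborhoods of the three drawn orthogeodesics and the $m$-neighborhoods of the cuffs. Inside each hexagon $H_k$ of a planar part I would choose a complementary component $\Delta$ of $|\mu|$ as in Figure 2, with $\Delta$ not separating two sides of $H_k$ joined by an arc of $|\mu|$, and push across $\Delta$, carrying $|\mu|\cap H_k$ into the rectangles around the two non-cuff sides together with the outer annuli (between distances $m$ and $2m$ from the cuffs). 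It remains to dispose of the finitely many combinatorial types of a single arc: an arc crossing a rectangle (Figure 3); an arc that enters and leaves a rectangle through the side it shares with a $2m$-collar, which is pushed further to avoid that rectangle; an arc that enters a planar part through a cuff and never leaves; and an arc joining two cuffs of the planar part, homotoped into rectangles together with the $2m$-collars of those two cuffs. In particular a geodesic of $|\mu|$ meeting a planar part but crossing none of its boundary cuffs ends up in the rectangles and outer annuli and never enters the inner $m$-neighborhood of a cuff, which is the final assertion.

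\emph{Stage 3 (gluing and the main obstacle).} Two pieces of the decomposition meet only along a cuff, and on the collar of a cuff every local homotopy above is the identity outside the $2m$-collar and the prescriptions agree there; so by local finiteness of $\{P_j\}_j\cup\{H_k\}_k$ the local homotopies glue to a single homotopy of $|\mu|$ whose image has support in $\mathcal{T}$, and the rule that a homotoped leaf runs along a rectangle exactly when the underlying geodesic sits as in Figure 3 is read directly off the case analysis. The hard part will not be any individual homotopy but the bookkeeping: making the case analysis inside the hexagons exhaustive, and checking that the choices of $\Delta$ in adjacent hexagons (together with the further homotopies removing the ``enters and leaves through the same side'' arcs) are mutually compatible --- all of which is controlled by the uniform bounds fixed in Stage 1.
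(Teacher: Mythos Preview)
Your proposal is correct and follows essentially the same approach as the paper: the proposition there is stated with ``We established'' immediately after the very construction you outline---uniform choice of $m,m^*$ from bounded geometry, the Dehn--Thurston homotopy of \cite{Sarictt} inside the pairs of pants, the choice of $\Delta$ and the push into rectangles and outer annuli inside each hexagon, and the same case analysis (Figures~2 and~3). Your Stage~3 gluing discussion is slightly more explicit than the paper, which leaves the compatibility across cuffs implicit, but otherwise the arguments coincide.
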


The homotopy of the geodesic lamination $|\mu |$ into the union of annuli and rectangles depends on the choice of $\Delta$ in the hexagons in the planar parts (see Figure 2). Still, it is independent of any choice in the union of the pairs of pants. 

Each annulus of a cuff on the boundary of a planar part is divided into the inside annuli (the $m$-neighborhoods of the cuff) and the outside annulus, which is concentric to the inside annulus and consists of the points on the distance between $m$ and $2m$ from the cuff on the planar side of the cuff. If a cuff is on the boundary of two planar parts, then the annulus around the cuff has two outside annuli, and if a cuff is on the boundary of a single planar part, then the annulus has one outside annulus.
In particular, if $X$ has a bounded pants decomposition (i.e., no planar parts), then the homotopy is unique, and we only consider the annuli that are $m$-neighborhoods of cuffs. 

This homotopy of the support geodesics of the measured lamination $\mu$ into parts of the surface $X$ that are rectangles and annuli is useful when constructing a realization of a measured lamination by a partial measured foliation. The measured lamination $\mu$ assigns a non-negative number (weight) to each rectangle and inside and outside annulus by taking the transverse measure of geodesics of $|\mu |$ that are homotoped in the corresponding piece. We also want to relate these numbers (weights) to invariant quantities (independent of the choices of the homotopies).

We define another family of simple closed geodesics starting from the family of bounded cuffs $\{\alpha_n\}_n$ and sides of the hexagons $\{ H_k\}_k$. If $\alpha_n$ is not on the boundary of a planar part of $X$, then either
there exist two pairs of pants $P_1$ and $P_2$ in $\{ P_j\}_j$ such that  $P_1\cup \alpha_n\cup P_2$ contains $\alpha_n$ in its interior, or there exists a pair of pants $P$ such that $\alpha_n$ is in the interior of $P\cup\alpha_n$. In the former case, let $\beta_n$ be a simple closed geodesic in $P_1\cup \alpha_n\cup P_2$ that intersects $\alpha_n$ in two points and has minimal length among all such geodesics.  In the latter case, let $\beta_n$ be a simple closed geodesic in $P\cup\alpha_n$ that intersects $\alpha_n$ in a single point and has minimal length among all such geodesics.

Next, we consider a side $s$ of a hexagon $H$ that connects two cuffs $\alpha_1$ and $\alpha_2$.
Let $X_s$ be the planar part of $X$ that contains $s$. Then $X\setminus X_s$ is connected to $X_s$ via the cuffs $\alpha_1$ and $\alpha_2$. We choose two simple closed curves in $X\setminus X_s$ that pass through the points $\alpha_1\cap s$ and $\alpha_2\cap s$, are homotopically non-trivial modulo $\alpha_1\cap s$ and $\alpha_2\cap s$, and have the minimal intersections with the cuffs and hexagons of $X\setminus X_s$. There is an upper bound on the number of intersections, which depends on the number of hexagons that meet a single cuff. There are finitely many different topological positions that all give a finite intersection. We form a closed curve by taking the first closed curve in $X\setminus X_s$ attached to $\alpha_1$ at $\alpha_1\cap s$ followed by $s$ then followed by the second closed curve attached to $\alpha_2$ at $\alpha_2\cap s$ and going back to the starting point along $s$. The obtained closed curve is homotopically non-trivial, and the geodesic $\beta_s$ in its homotopy class is simple. In addition, $\beta_s$ intersects both $\alpha_1$ and $\alpha_2$, and it intersects any simple geodesic that also intersects $s$.

\begin{prop}
\label{prop:weights-int-num}
Let $X$ be an infinite Riemann surface $X$ with bounded geometry, and let $\mu$ be a measured lamination on $X$. Divide $X$ into bounded pairs of pants $\{P_j\}_j$ and bounded hexagons $\{ H_k\}_k$, and define the rectangles and annuli as above. Then, the weights on the rectangles and inner and outer annuli are in the $\ell^2$-space of all functions from the rectangles and annuli to the real numbers if and only if
\begin{equation}
\label{eq:sum^2-int}
\sum_n [i(\alpha_n,\mu )^2+i(\beta_n,\mu )^2]+\sum_si(\beta_s,\mu )^2<\infty ,
\end{equation}
where $i(\gamma ,\mu )$ is the total tranverse measure of the geodesics of $|\mu |$ that cross a closed geodesic $\gamma$.
\end{prop}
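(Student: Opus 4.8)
The plan is to show that the square of the $\ell^2$-norm of the weight function $w$ (which is indexed by the rectangles and the inner and outer annuli forming $\mathcal{T}$) and the quantity on the left-hand side of (\ref{eq:sum^2-int}) are comparable up to multiplicative constants depending only on the geometry bounds of $X$; the equivalence of the two finiteness conditions follows immediately. Two consequences of bounded geometry will be used repeatedly: (i) each cuff $\alpha_n$ meets only a uniformly bounded number of pairs of pants and hexagons, so only a uniformly bounded number of pieces of $\mathcal{T}$ lie within bounded distance of $\alpha_n$; and (ii) each $\beta_n$ and each $\beta_s$, being of minimal length in one of finitely many topologically constrained classes, has uniformly bounded hyperbolic length, hence crosses only a uniformly bounded number of pieces of $\mathcal{T}$ and does so with uniformly bounded multiplicity. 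We also record that homotoping the geodesics of $|\mu|$ into $\mathcal{T}$ (as in Proposition \ref{prop:ml-support}) does not change the geometric intersection number of $\mu$ with any of $\alpha_n$, $\beta_n$, $\beta_s$, since geometric intersection number of a measured lamination with a simple closed curve depends only on homotopy classes.

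\emph{First inequality: the sum in (\ref{eq:sum^2-int}) is at most $C_1\|w\|_{\ell^2}^2$.} By construction the geodesics of $|\mu|$ crossing $\alpha_n$ are exactly those homotoped across the inner annulus around $\alpha_n$, so $i(\alpha_n,\mu)$ is at most the weight of that annulus; hence $\sum_n i(\alpha_n,\mu)^2\le\|w\|_{\ell^2}^2$. For a curve $\beta_n$ (and likewise $\beta_s$), every geodesic of $|\mu|$ meeting it is, after the homotopy, carried through one of the boundedly many pieces of $\mathcal{T}$ that $\beta_n$ crosses; thus $i(\beta_n,\mu)$ is bounded by a uniform constant times the sum of the weights of those pieces, and Cauchy--Schwarz gives $i(\beta_n,\mu)^2\le C\sum_P w(P)^2$ over those pieces. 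Summing over $n$ and over $s$, and using that a fixed piece $P$ lies within bounded distance of (hence can be crossed by) only boundedly many of the $\beta_n$ and $\beta_s$, yields the claimed bound.

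\emph{Second inequality: $\|w\|_{\ell^2}^2$ is at most $C_2$ times the sum in (\ref{eq:sum^2-int}).} Here I assign to each piece of $\mathcal{T}$ a witness curve (or pair of curves) from $\{\alpha_n\}$, $\{\beta_n\}$, $\{\beta_s\}$ whose intersection number with $\mu$ dominates that piece's weight, with a uniform constant. An annulus around a cuff $\alpha_n$ has witness $\alpha_n$ together with one transverse curve: the crossing part of its weight is controlled by $i(\alpha_n,\mu)$, while an atomic (closed-leaf) mass of $\mu$ along $\alpha_n$ is controlled by $i(\beta_n,\mu)$ when $\beta_n$ is defined, and otherwise by $i(\beta_s,\mu)$ for a hexagon side $s$ orthogonal to $\alpha_n$ (there is at least one, and only boundedly many), since such a $\beta_s$ crosses $\alpha_n$. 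A rectangle around a hexagon side $s$ has witness $\beta_s$: by construction $\beta_s$ crosses every simple geodesic that crosses $s$, and the geodesics homotoped through that rectangle are exactly those crossing $s$, so the rectangle's weight is at most a uniform multiple of $i(\beta_s,\mu)$. A rectangle around one of the three orthogeodesics in a pair of pants $P$ has as witness an appropriate curve $\beta_n$ associated with a cuff of $P$: by the Dehn--Thurston coordinates adapted to the pants decomposition (see \cite{Sarictt}), the weight on such a rectangle is bounded by a uniform multiple of $i(\beta_n,\mu)$. In each case the constant is uniform by bounded geometry and each witness is assigned to only boundedly many pieces, so summing the squares gives the inequality.

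The main obstacle is the bookkeeping in the second inequality: one must verify that the assignment ``piece $\mapsto$ witness curve'' is uniformly finite-to-one and that each weight is genuinely dominated by the corresponding intersection number with a uniform constant. For the rectangles inside the pairs of pants this rests on the standard relation between Dehn--Thurston train-track weights and intersection numbers with the transverse curves $\beta_n$ from \cite{Sarictt}, where one must also account for the twisting contributions and for the degenerate cases singled out in the construction of \S\ref{sec:const-rect}: closed leaves of $|\mu|$ parallel to a cuff, and geodesics that enter and leave a rectangle or an annulus through the same side (these were homotoped out of the piece). All of these are handled using only the uniform bounds on the lengths of the cuffs and of the sides of the hexagons, which keep every relevant combinatorial quantity uniformly bounded.
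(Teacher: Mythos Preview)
Your approach is essentially the same as the paper's: bound each intersection number by a bounded sum of nearby weights for one direction, bound each weight by a bounded sum of nearby intersection numbers for the other, and then invoke Cauchy--Schwarz together with the uniform local finiteness coming from bounded geometry. The paper's proof is terser but follows exactly this plan.

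One correction in your second inequality: for a rectangle around an orthogeodesic in a pair of pants $P$, the natural witness is a cuff $\alpha_i$ of $P$ that the rectangle meets, not a transverse curve $\beta_n$. Every geodesic of $|\mu|$ homotoped through such a rectangle crosses the cuff $\alpha_i$ at one end of the orthogeodesic, so the rectangle's weight is at most $i(\alpha_i,\mu)$ directly; no appeal to Dehn--Thurston coordinates is needed, and this is exactly what the paper does (compare the proof of Proposition~\ref{prop:int-weights}, where the edges meeting $\alpha_n$ are bounded by $i(\alpha_n,\mu)$). By contrast, $i(\beta_n,\mu)$ need not dominate that rectangle's weight: a lamination can cross $\alpha_i$ while being disjoint from $\beta_i$, since two simple arcs in $P_1\cup\alpha_i\cup P_2$ that both cross $\alpha_i$ need not intersect each other. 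The curves $\beta_n$ (and $\beta_s$) are needed precisely where you already use them, namely to control the annulus weights coming from a closed leaf along $\alpha_n$ or from twisting; they are not the right witnesses for the rectangles. With this swap your argument goes through and matches the paper's.
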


\begin{proof}
Assume that the weights on the rectangles and inner and outer annuli are in the $\ell^2$-space. Each closed geodesic in the families $\{ \alpha_n\}_n$, $\{\beta_n\}_n$ and $\{\beta_s\}_s$ intersects at most finitely many rectangles and annuli. Thus, (\ref{eq:sum^2-int}) holds since the intersection numbers are bounded by the total weights on the rectangles and annuli that intersect given closed geodesic.

Conversely, assume that (\ref{eq:sum^2-int}) holds. The set of geodesics corresponding to either a rectangle or an annulus intersects at least one closed geodesic in the families $\{ \alpha_n\}_n$, $\{\beta_n\}_n$ and $\{\beta_s\}_s$. If we pick the closest closed geodesic in the families $\{ \alpha_n\}_n$, $\{\beta_n\}_n$ and $\{\beta_s\}_s$ to the rectangle or annulus under consideration, then the weight on each rectangle and each annulus is bounded above by the finite sum of intersections with nearby closed geodesics. We conclude that the weights are in $\ell^2$-space using the Cauchy-Schwarz inequality.
\end{proof}

\subsection{The case of bounded geodesic pants decomposition}
\label{sec:tt}

In this subsection, we assume that $X$ has a bounded pants decomposition with cuffs $\{\alpha_n\}_n$.  
The Dehn-Thurston train track $\Theta$ corresponding to the geodesics $\{\alpha_n\}_n$ is obtained by adding finitely many edges in each pair of pants (see \cite{Sarictt}). In fact, given any $\mu\in ML(X)$ there is a Dehn-Thurston train track $\Theta$ that weakly carries $\mu$ (see \cite{Sarictt}). The measured lamination $\mu$ induces a weight function 
$w:E(\Theta )\to\mathbb{R}$, where $E(\Theta )$ is the set of edges of $\Theta$. We find an equivalent condition to (\ref{eq:sum-intersections}) in terms of the weights on the train track $\Theta$. 

\begin{figure}[h]
\leavevmode \SetLabels
\endSetLabels
\begin{center}
\AffixLabels{\centerline{\epsfig{file =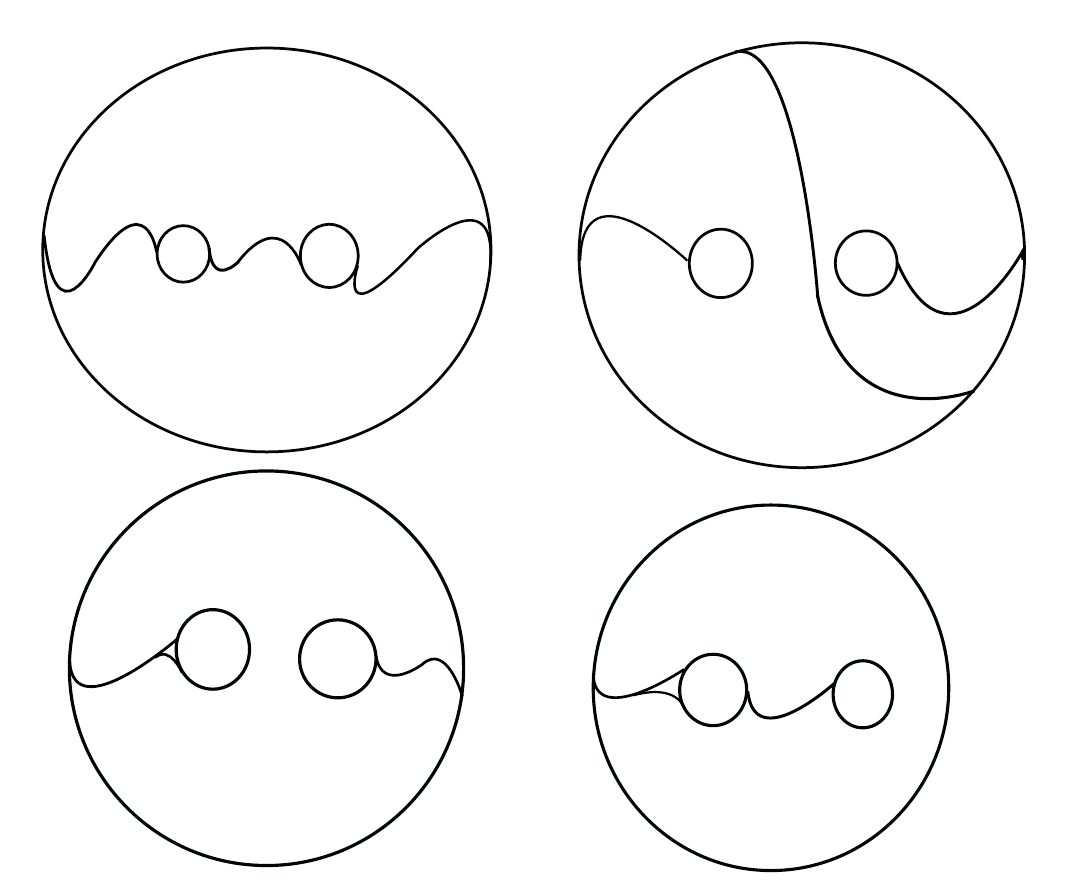,bb= 0 0 400 400,width=10cm,angle=0}}}
\vspace{-20pt}
\end{center}
\label{pants}
\caption{The standard train tracks on pairs of pants with $3$ cuffs.} 
\end{figure}

\begin{figure}[h]
\leavevmode \SetLabels
\endSetLabels
\begin{center}
\AffixLabels{\centerline{\epsfig{file =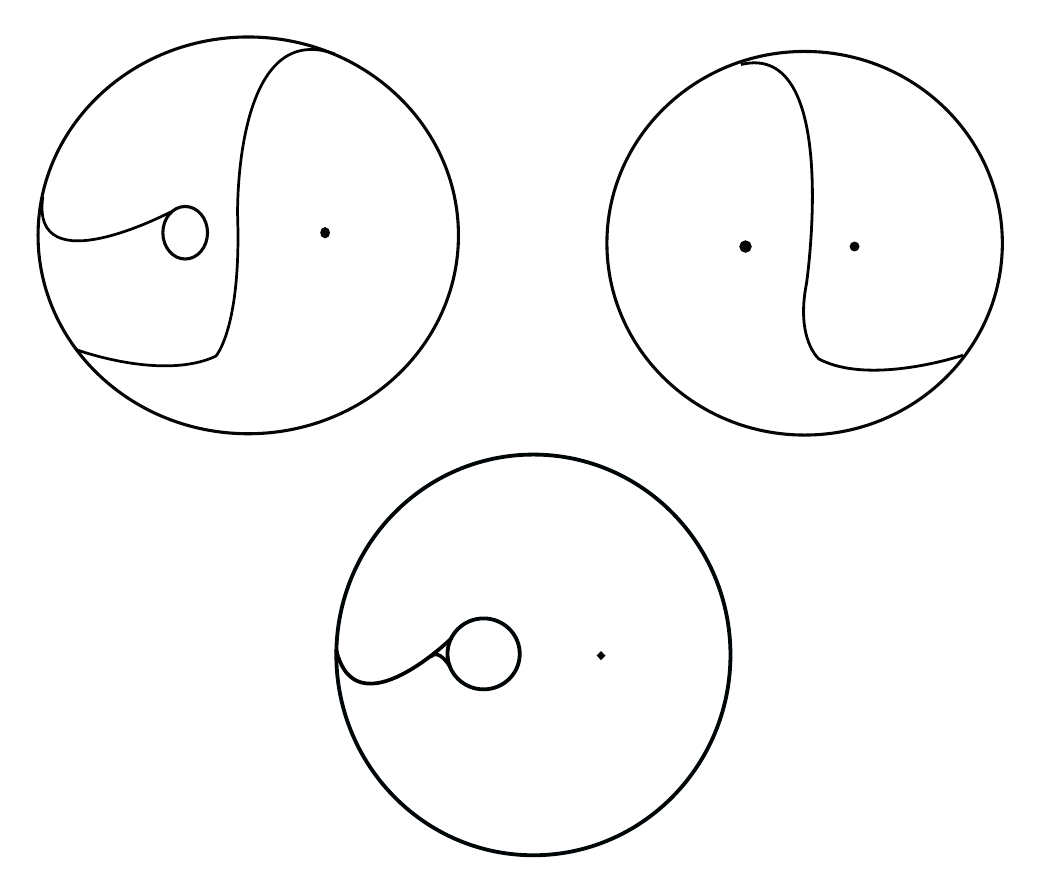,bb= 0 0 500 500,width=10cm,angle=0}}}
\vspace{-20pt}
\end{center}
\label{pants}
\caption{The standard train tracks on pairs of pants with $2$ and $1$ cuffs.} 
\end{figure}

\begin{prop}
\label{prop:int-weights}
Let $\mu\in {ML}(X)$ and $w:E(\Theta )\to\mathbb{R}$ be the corresponding weights on the Dehn-Thurston train tracks $\Theta$ that weakly carries $\mu$. Then
$$
\sum_n [i(\alpha_n,\mu )^2+i(\beta_n,\mu )^2]<\infty
$$
is equivalent to
\begin{equation}
\label{eq:l2-weights}
\sum_{a\in E(\Theta )}w(a)^2<\infty .
\end{equation}
\end{prop}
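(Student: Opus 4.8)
The plan is to prove the two implications by a \emph{localization plus bounded overlap} argument. The bounded-geometry hypothesis gives three combinatorial facts: (i) the Dehn--Thurston train track $\Theta$ has a uniformly bounded number $N_0$ of edges in each pair of pants; (ii) each cuff $\alpha_n$ lies on at most two pairs of pants; and (iii) each curve $\beta_n$ has uniformly bounded length, so when put in minimal transverse position with respect to $\Theta$ it meets only a uniformly bounded number of edges, each at most a uniformly bounded number of times, and all these edges lie in the one or two pairs of pants adjacent to $\alpha_n$ (this is exactly why $\beta_n$ was chosen to be a minimal-length curve crossing $\alpha_n$ once or twice inside the adjacent pairs of pants). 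Together with the standard correspondence between a measured lamination weakly carried by $\Theta$ and its weight function (see \cite{Sarictt}), these facts yield the two comparison estimates below, with all constants depending only on the injectivity-radius bounds of $X$.

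\emph{First comparison.} For every $n$,
$$ i(\alpha_n,\mu)\le\sum_a w(a),\qquad i(\beta_n,\mu)\le\sum_a c_a(n)\,w(a), $$
where the first sum is over the boundedly many edges $a$ of $\Theta$ crossing $\alpha_n$, and in the second the $c_a(n)$ are nonnegative integers with $c_a(n)\le N_1$, $\sum_a c_a(n)\le N_1$, and $c_a(n)=0$ unless $a$ lies in a pair of pants having $\alpha_n$ as a cuff. \emph{Second comparison.} For every edge $a$ of $\Theta$, say $a\subset P_j$,
$$ w(a)\le C_0\sum_{m\in I(P_j)}\bigl(i(\alpha_m,\mu)+i(\beta_m,\mu)\bigr), $$
where $I(P_j)$ is the uniformly bounded set of indices $m$ with $\alpha_m$ a cuff of $P_j$ or of a pair of pants adjacent to $P_j$. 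The second comparison is where the Dehn--Thurston coordinate formulas enter: the weight of a connection edge of $\Theta$ in $P_j$ is a connection number of $\mu$ in $P_j$, hence at most half the sum of $i(\alpha_m,\mu)$ over the cuffs of $P_j$; while the weight of a twisting edge of $\Theta$ at a cuff $\alpha_n$ of $P_j$ is controlled by the $\alpha_n$-twisting coordinate of $\mu$, which differs from $i(\beta_n,\mu)$ only by a bounded combination of the connection numbers in the pairs of pants flanking $\alpha_n$.

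Granting the two comparisons, the proposition follows by Cauchy--Schwarz together with the bounded-overlap facts. For $(\ref{eq:l2-weights})\Rightarrow\sum_n[i(\alpha_n,\mu)^2+i(\beta_n,\mu)^2]<\infty$: the first comparison and Cauchy--Schwarz give $i(\alpha_n,\mu)^2+i(\beta_n,\mu)^2\le C\sum_{a}w(a)^2$, the sum over the boundedly many edges $a$ near $\alpha_n$; summing over $n$ and interchanging the order of summation (each edge $a$ is near at most three cuffs) bounds the total by $C'\sum_{a\in E(\Theta)}w(a)^2<\infty$. Conversely, the second comparison and Cauchy--Schwarz give $w(a)^2\le C_0^2\,|I(P_j)|\sum_{m\in I(P_j)}\bigl(i(\alpha_m,\mu)^2+i(\beta_m,\mu)^2\bigr)$; summing over all edges $a$, grouping by the pants containing $a$ (at most $N_0$ edges per pants) and interchanging summation (each $m$ lies in $I(P_j)$ for boundedly many $j$) bounds $\sum_a w(a)^2$ by $C''\sum_n[i(\alpha_n,\mu)^2+i(\beta_n,\mu)^2]<\infty$.

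The main obstacle is the twisting part of the second comparison: bounding the weight on a twisting branch of $\Theta$ at $\alpha_n$ by $i(\beta_n,\mu)$ plus a bounded combination of intersection numbers with nearby cuffs. This requires unwinding the Dehn--Thurston normal form of $\mu$ relative to $\alpha_n$ in the one or two pairs of pants flanking $\alpha_n$ and using that $\beta_n$ realizes the minimal transverse position crossing $\alpha_n$; bounded geometry is what makes the constant uniform in $n$. (Alternatively, in the absence of planar parts the edges of $\Theta$ are in bounded-to-one correspondence with the rectangles and annuli of \S\ref{sec:const-rect}, and the two weight functions are comparable up to bounded linear combinations, after which Proposition~\ref{prop:weights-int-num} with no $\beta_s$-terms gives the claim directly; note that this route still passes through the same twisting-versus-$i(\beta_n,\mu)$ comparison, now hidden inside Proposition~\ref{prop:weights-int-num}.) Everything else is bookkeeping with Cauchy--Schwarz.
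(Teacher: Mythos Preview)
Your proposal is correct and follows essentially the same strategy as the paper: localize to a pair of pants, bound intersection numbers by nearby weights and vice versa, then sum using bounded overlap and Cauchy--Schwarz. The only notable difference is in the handling of the twisting edges. Where you invoke Dehn--Thurston coordinate formulas and the comparison of the twisting coordinate with $i(\beta_n,\mu)$, the paper argues more directly: since $\beta_n$ crosses $\alpha_n$, it necessarily crosses at least one of the edges of $\Theta$ lying on $\alpha_n$, so that edge has weight at most $i(\beta_n,\mu)$; then the switch relations propagate this bound (up to adding weights of incoming edges, which are already controlled by $i(\alpha_n,\mu)$) to the remaining edges on $\alpha_n$. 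This is a slightly cleaner route to the same inequality and avoids appealing to the twisting-coordinate formalism, though your version is perfectly valid and arguably makes the uniform constants more visible.
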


\begin{proof}
Consider a train track $\Theta$ weakly carrying $\mu$ constructed using the boundary geodesics $\{\alpha_n\}_n$ of the pants decomposition by slightly deforming the standard train tracks in \cite{Sarictt} as in Figures 4 and 5. The train track $\Theta$ is constructed by adding finitely many edges to each pair of pants(see Figures 4 and 5). If the weights on the train track satisfy (\ref{eq:l2-weights}), then it follows that 
$\sum_n [i(\alpha_n,\mu )^2+i(\beta_n,\mu )^2]<\infty$ since there is an upper bound on the number of edges of $\Theta$ that have vertices on each $\alpha_n$ and $\beta_n$.

Conversely, assume that $
\sum_n [i(\alpha_n,\mu )^2+i(\beta_n,\mu )^2]<\infty
$. Then, all the edges on one side of $\alpha_n$ correspond to the leaves that intersect $\alpha_n$. Therefore, the weight of each edge meeting $\alpha_n$ is less than or equal to $i(\alpha_n,\mu )$. Since $\beta_n$ intersects $\alpha_n$, the weight of one edge on $\alpha_n$ of the train track $\Theta_0$ is less than or equal to $i(\beta_n ,\mu)$. Any other edge on $\alpha_n$ has weight bounded by the sum of  $i(\beta_n ,\mu)$ and weights of edges meeting $\alpha_n$ from one side (because of the switch relations). Therefore each edge on $\alpha_n$ has weight bounded by a constant times  $ i(\alpha_n ,\mu)+i(\beta_n ,\mu)$. The condition (\ref{eq:l2-weights}) follows.
\end{proof}

\section{A necessary condition for measured laminations to represent horizontal foliations of finite-area
differentials}

In this section, $X=\mathbb{H}/\Gamma$ is an infinite Riemann surface with bounded geometry. Then, the fundamental group $\Gamma$ of $X$ is of the first kind. The surface $X$ has countably many simple closed mutually disjoint geodesics $\{\alpha_n\}_n$ called {\it cuffs} that satisfy
$$
\frac{1}{C}\leq \ell_X(\alpha_n)\leq C
$$
for a fixed $C>0$ and all $n$. The cuffs decompose $X$ into a countable union of the geodesic pairs of pants $\{ P_j\}_j$ with boundary geodesics $\{\alpha_n\}_n$, and at most countably many planar parts decomposed into the union of right-angled hexagons $\{ H_k\}_k$ with three nonconsecutive sides on the cuffs and the other three sides orthogonal to cuffs and all side having their lengths between $1/C$ and $C$. 

To each cuff $\alpha_n$ that is in the interior of the union of two pairs of pants or a single pair of pants from $\{ P_j\}_j$, there is associated shortest simple closed geodesic $\beta_n$ with bounded length intersecting $\alpha_n$ in one or two points in the corresponding pants. For each side $s$ of a hexagon in $\{ H_k\}_k$ that does not lie on a cuff, there is associated simple closed geodesic $\beta_s$. The geodesics $\beta_s$ for all $s$ have bounded lengths, and they intersect the cuffs that $s$ connects.

Let $\varphi$ be a finite-area holomorphic quadratic differential on $X$ and let $\mu_{\varphi}\in {ML}_{int}(X)$ be the measured lamination corresponding to the vertical foliation of $\varphi$ as in \cite{Saric23}. 
We prove that the intersection numbers of any $\mu_{\varphi}$ with the family $\{ \alpha_n\}\cup\{\beta_n\}\cup\{\beta_s\}$ satisfies an $\ell^2$-summability condition. 

\begin{prop}
\label{prop:int-numbers}
Let $X$ be an infinite Riemann surface with a bounded geometry and cuffs $\{\alpha_n\}_n$ and transverse geodesic families $\{\beta_n\}_n$ and $\{\beta_s\}_s$ defined above. For any $\varphi\in A(X)$, we have
\begin{equation}
\label{eq:sum-intersections}
\sum_n [i(\alpha_n,\mu_{\varphi})^2+i(\beta_n,\mu_{\varphi})^2]+\sum_s i(\beta_s,\mu_{\varphi})^2 <\infty .
\end{equation}
\end{prop}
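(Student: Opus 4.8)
The plan is to convert each intersection number into a height, bound that height by a localized length–area (extremal-length) inequality obtained from the collar lemma, and then sum the local bounds against $\int_X|\varphi|$ using a finite-multiplicity argument that comes from bounded geometry.

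First I would recall that for a simple closed geodesic $\gamma$ and a finite-area holomorphic quadratic differential $\varphi$ one has
$$
i(\gamma,\mu_\varphi)=h_{\mathcal{F}_\varphi}(\gamma)=\inf_{\gamma_1\sim\gamma}\int_{\gamma_1}|dv|,
$$
where $\mathcal{F}_\varphi$ is the foliation of $\varphi$ and $|dv|=|\mathrm{Im}(\sqrt{\varphi}\,dz)|$ is its transverse measure: the intersection number of $\gamma$ with a measured foliation is by definition the infimum of the total transverse measure in its homotopy class, and straightening $\mathcal{F}_\varphi$ to the measured lamination $\mu_\varphi$ preserves intersection numbers with simple closed curves (see \cite{Saric-heights}, \cite{Saric23}). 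Hence it suffices to prove $\sum_n[h_\varphi(\alpha_n)^2+h_\varphi(\beta_n)^2]+\sum_s h_\varphi(\beta_s)^2<\infty$, writing $h_\varphi$ for $h_{\mathcal{F}_\varphi}$.

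The key local estimate is next. Every $\gamma$ in the family $\{\alpha_n\}\cup\{\beta_n\}\cup\{\beta_s\}$ is a simple closed geodesic of hyperbolic length bounded above by a constant depending only on $C$, so by the collar lemma it bounds an embedded annular neighborhood $N(\gamma)\subset X$ of modulus $\ge c=c(C)>0$; equivalently $\mathrm{Ext}_{N(\gamma)}(\gamma)\le 1/c$. Applying the definition of extremal length with the conformal metric $\rho=|\varphi|^{1/2}\,|dz|$ on $N(\gamma)$, whose area is $\int_{N(\gamma)}|\varphi|$ and whose $\rho$-length of any loop $\gamma_1$ dominates $\int_{\gamma_1}|dv|$ because $|dv|\le|\varphi|^{1/2}|dz|$ pointwise, gives
$$
h_\varphi(\gamma)^2\le h_{\varphi,N(\gamma)}(\gamma)^2\le \mathrm{Ext}_{N(\gamma)}(\gamma)\int_{N(\gamma)}|\varphi|\le \frac1c\int_{N(\gamma)}|\varphi|,
$$
where $h_\varphi(\gamma)\le h_{\varphi,N(\gamma)}(\gamma)$ simply because the infimum for the latter runs over fewer loops. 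I would then sum: by the construction in \S\ref{sec:const-rect}, each $\gamma$ lies within a uniformly bounded Hausdorff distance of a single cell (a bounded pair of pants, or a union of boundedly many bounded hexagons together with a bounded excursion into the rest of $X$), so $\mathrm{diam}\,N(\gamma)\le d_0(C)$; and bounded geometry forces any ball of radius $d_0$ to meet at most $N_0=N_0(C)$ of these cells, hence every point of $X$ lies in at most $N_0$ of the annuli $N(\gamma)$. Therefore
$$
\sum_{\gamma}i(\gamma,\mu_\varphi)^2=\sum_{\gamma}h_\varphi(\gamma)^2\le\frac1c\sum_\gamma\int_{N(\gamma)}|\varphi|=\frac1c\int_X\Big(\sum_\gamma\mathbf{1}_{N(\gamma)}\Big)|\varphi|\le\frac{N_0}{c}\int_X|\varphi|<\infty,
$$
which is exactly (\ref{eq:sum-intersections}). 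Alternatively one could homotope $\mu_\varphi$ into the union of annuli and rectangles of \S\ref{sec:const-rect}, bound the weight of each piece by the height of a transversal across a bounded-modulus neighborhood of it, and then quote Proposition \ref{prop:weights-int-num}; this is the same computation carried out piece by piece.

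The step I expect to be the main obstacle is the finite-multiplicity claim: one has to check, using only the two-sided bounds on the lengths of cuffs and hexagon sides, that the auxiliary curves $\beta_n$, and especially the $\beta_s$ (built from loops wandering into $X\setminus X_s$), stay within a uniformly bounded distance of a bounded-diameter cell, and that only boundedly many such cells — hence boundedly many collars $N(\gamma)$ — can approach any given point. A minor additional point is that $|dv|=|\mathrm{Im}(\sqrt\varphi\,dz)|$ is defined only up to sign away from the zeros of $\varphi$; but $|\varphi|^{1/2}|dz|$ is a bona fide conformal metric (degenerate only on the discrete zero set) and the pointwise bound $|dv|\le|\varphi|^{1/2}|dz|$ persists, so the extremal-length inequality is unaffected by zeros of $\varphi$ inside $N(\gamma)$.
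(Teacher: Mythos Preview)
Your proposal is correct and follows essentially the same approach as the paper: both arguments bound $i(\gamma,\mu_\varphi)^2$ by a constant times $\int_{N(\gamma)}|\varphi|$ using the collar lemma together with a length--area/extremal-length inequality, and then sum using bounded overlap of the collars. The only cosmetic differences are that the paper works with the curve family $\mathcal{B}_n$ of horizontal subleaves crossing the collar and applies Cauchy--Schwarz explicitly (obtaining the modulus of the crossing family), whereas you invoke the extremal length of the core curve directly; and for the multiplicity bound the paper simply counts how many of the $\beta_n$ (at most four) or $\beta_s$ (uniformly finitely many) can pairwise intersect, which is the concrete version of your bounded-diameter cell argument.
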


\begin{proof}
Let $\mathcal{C}_n$ be the standard collar around $\alpha_n$, that is, the set of points in $X$ that are on the distance at most $\sinh^{-1}\frac{1}{\sinh \ell_X(\alpha_n)/2}$ from the geodesic $\alpha_n$. By the Collar Lemma \cite{Buser}, the collars $\mathcal{C}_n$ are mutually disjoint. Let $\mathcal{B}_n$ be the subleaves of the horizontal foliation $\mathcal{F}_{\varphi}$ of $\varphi$ that connect the two boundary components of $\mathcal{C}_n$. Note that by \cite[Lemma 6.1]{Saric-heights}  we have
$$
i(\alpha_n,\mu_{\varphi})\leq i(\alpha_n,\mathcal{B}_n).
$$

Let $I_n$ be the union of countably many vertical arcs that intersect only the leaves of $\mathcal{B}_n$, and at most countably many leaves of $\mathcal{B}_n$ can intersect $I_n$ more than once. 
Let $w:=u+iv=\int_{*}\sqrt{\varphi (z)}dz$ be the natural parameter of $\varphi$. Then we have 
$$i(\alpha_n,\mathcal{B}_n)=\int_{I_n}dv$$
by the definition of the intersection number, where the intersection number is with respect to the family of curves in $\mathcal{C}_n$ homotopic to $\alpha_n$ .

The Cauchy-Schwarz inequality and the above give
\begin{equation}
\label{eq:intersection_cuff}
i(\alpha_n,\mu_{\varphi})^2\leq \Big{(}\int_{I_n}dv\Big{)}^2\leq\int_{I_n}l_n(w)dv\int_{I_n}\frac{1}{l_n(w)}dv
\end{equation}
where $l_n(w)$ is the length of the leaf of $\mathcal{B}_n$ through $w$ in the $\varphi$-metric. 

Note that by the Fubini's theorem $$\int_{I_n}l_n(w)dv=\int_{\widehat{\mathcal{B}}_n}dudv\leq\int_{\mathcal{C}_n}|\varphi (z)|dxdy,$$ where $\widehat{\mathcal{B}}_n$ stands for the region in $\mathcal{C}_n$ which is the union of the leaves of $\mathcal{B}_n$.

In the natural parameter $w=u+iv$ of $\varphi$, the leaves of $\mathcal{B}_n$ are horizontal arcs that are disjoint and parallel. Given $w\in\widehat{\mathcal{B}}_n$, define $\frac{1}{l_n(w)}|dw|$ a conformal metric supported on $\mathcal{C}_n$ which is zero on $\mathcal{C}_n\setminus \widehat{\mathcal{B}}_n$.  This metric is extremal for the family of curves $\mathcal{B}_n$ (see \cite{HakobyanSaric}) and since $|dw|=du$ on the curves in $\mathcal{B}_n$, we obtain
$$
\int_{I_n}\frac{1}{l_n(w)}dv=\mathrm{mod}\mathcal{B}_n
$$
where $\mathrm{mod}\mathcal{B}_n$ is the modulus of the family of curves $\mathcal{B}_n$. 
Further
$$
\mathrm{mod}\mathcal{B}_n\leq\mathrm{mod} \mathcal{C}_n\leq C'
$$
where $\mathrm{mod} \mathcal{C}_n$ is the modulus of all curves in $\mathcal{C}_n$ that connect the two boundary components of $\mathcal{C}_n$. The inequality $\mathrm{mod} \mathcal{C}_n\leq C'$ holds for all $n$ by the lower bound on the lengths $\ell_X(\alpha_n)$ (see \cite{Maskit}, \cite{BHS} or \cite{Saric-heights} for details).

Therefore from (\ref{eq:intersection_cuff}) we obtain
\begin{equation}
\label{eq:int_local}
i(\alpha_n,\mu_{\varphi})^2\leq C'\int_{\mathcal{C}_n}|\varphi (z)|dxdy
\end{equation}
and summing over all $n$ gives
\begin{equation}
\label{eq:int-sum-cuffs}
\sum_n i(\alpha_n,\mu_{\varphi})^2\leq C'\|\varphi\|_{L^1(X)}.
\end{equation}

Consider the family of simple closed geodesics $\{\beta_n\}_n$ associated to $\{\alpha_n\}_n$ as above. Note that $\ell_X(\beta_n)$ is also bounded between two positive constants because $\ell_X(\alpha_n)$ is bounded and $\beta_n$ is chosen to be of minimal length among all closed geodesics intersecting $\alpha_n$ in a minimal number of points inside $P_1\cup\alpha_n\cup P_2$ or $P\cup\alpha_n$. In addition, each $\beta_n$ can intersect at most four other geodesics from the family $\{\beta_n\}_n$. Thus, each standard collar around $\beta_n$ can intersect at most four standard collars around other geodesics in $\{\beta_n\}_n$. Applying the inequality (\ref{eq:int_local}) to the family $\{\beta_n\}_n$ gives
$$
\sum_n i(\beta_n,\mu_{\varphi})^2\leq 4C'\|\varphi\|_{L^1(X)}.
$$

Consider the family of simple closed geodesics $\{\beta_s\}_s$ associated with the sides of the hexagons $s$ not lying on the cuffs. Note that $\ell_X(\beta_s)$ is also bounded between two positive constants because $\ell_X(\alpha_n)$ is bounded, the hexagons are bounded, and $\beta_s$ is chosen to be homotopic to the union of four arcs each of bounded length. In addition, each $\beta_s$ can intersect at most finitely many (uniformly over all $s$) other geodesics from the family $\{\beta_s\}_s$. Thus
$$
\sum_s i(\beta_s,\mu_{\varphi})^2\leq C''\|\varphi\|_{L^1(X)}
$$
and the proposition is proved. 
\end{proof}

\section{A sufficient condition for measured laminations to represent horizontal foliations of finite-area differentials}
\label{sec:sufficient-cond}

In this section, we assume that $X$ is an infinite Riemann surface with bounded geometry. Given $\mu\in ML_{int}(X)$, our goal is to construct a partial measured foliation $\mathcal{F}$ on $X$ whose support homotopes to $|\mu |$ such that its transverse measure agrees with $\mu$ under the push-forward and $D_X(\mathcal{F})<\infty$. Then there exists $\varphi\in A(X)$ such that $\mu_{\varphi}=\mu$. 

To construct $\mathcal{F}$, we will construct measured foliations of the rectangles and annuli defined with respect to $\mu$ in \S\ref{sec:const-rect}. Our first lemma constructs a horizontal foliation of a rectangle whose transverse measure is the Euclidean measure on the vertical segments.

\begin{lem}
\label{lem:vert-rect-fol}
Let $R=[a,b]\times [c,d]$ be a rectangle in the complex plane $\mathbb{C}$. Then the function
$$
v(x,y)=y
$$
defines a foliation of $R$ by leaves $y=const$ and satisfies
$$
D_R(v)=area(R)=(b-a)(d-c).
$$
\end{lem}

\begin{proof}
Note that $(\partial u/\partial x)^2+(\partial u/\partial y)^2=1$. 
\end{proof}

The following lemma provides the connection between two foliations when one foliation is horizontal, and the other is vertical for the canonical Euclidean structure on the rectangles. 

\begin{lem}
\label{lem:L-fol}
Let $R=[a,b]\times [c,d]$ be a rectangle in the complex plane $\mathbb{C}$ with $\frac{d-c}{b-a}\leq C$. Then, there exists a partial measured foliation on $R$ given by
$$
v:R\to\mathbb{R}
$$
such that its leaves connect the top side to the left side of $R$ (see Figure 6), the transverse measure to the left side is the Euclidean measure, the transverse measure to the top side is proportional to the Euclidean measure, and
$$
D_R(v)=C'\cdot area(R),
$$
where $C'=1+C^2$.
\end{lem}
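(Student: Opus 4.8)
The plan is to write down an explicit piecewise-defined function $v$ on $R=[a,b]\times[c,d]$ whose level sets are the desired $L$-shaped arcs joining the top side to the left side, and then compute the Dirichlet integral directly. After translating so that $a=c=0$, write $p=b-a$ for the width and $q=d-c$ for the height, so $q/p\le C$. Each $L$-shaped leaf should consist of a horizontal segment starting on the left edge $\{0\}\times[0,q]$ at some height, running rightward, then turning and running upward to the top edge $[0,p]\times\{q\}$. The natural parametrization is to let the leaf through a point be indexed by the single real number $t = (\text{its horizontal level}) + (\text{its vertical level measured from the top})$, i.e. set
\begin{equation}
\label{eq:Ldef}
v(x,y)=\min\{y,\; (q-y)+ \tfrac{q}{p}\,x\}\quad\text{(up to an affine rescaling)},
\end{equation}
or equivalently describe $R$ as the union of two triangular regions, a lower-left region $T_1$ where the leaves are horizontal segments $y=\mathrm{const}$ and an upper-right region $T_2$ where the leaves are vertical segments $x=\mathrm{const}$, glued along the diagonal from the bottom-left corner to the top-right corner. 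On $T_1$ one takes $v=y$; on $T_2$ one takes $v=$ an affine function of $x$ alone, scaled so that the two definitions agree along the diagonal (this is where the factor $q/p$ enters, since the diagonal has slope $q/p$). The transition across the diagonal is then of the form $v_i=\pm v_j+\mathrm{const}$ as required by \eqref{eq:coord-change}, the tangent map of $v$ is surjective everywhere, and by construction the level sets hit the left side and the top side exactly, with the transverse measure on the left side equal to the Euclidean one (since $v=y$ there) and the transverse measure on the top side a constant multiple ($q/p$, or its reciprocal) of the Euclidean one.

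Next I would compute $D_R(v)=D_{T_1}(v)+D_{T_2}(v)$. On $T_1$, $\nabla v=(0,1)$ so $|\nabla v|^2=1$ and the contribution is $\mathrm{area}(T_1)=\tfrac12 pq$. On $T_2$, $\nabla v=(\lambda,0)$ with $\lambda$ the chosen slope; to make $v$ continuous along the diagonal one needs $\lambda = q/p$ (matching the rate at which $y$ changes along the diagonal against the rate at which $x$ changes), so $|\nabla v|^2 = q^2/p^2$ and the contribution is $\tfrac{q^2}{p^2}\cdot\mathrm{area}(T_2)=\tfrac{q^2}{p^2}\cdot\tfrac12 pq = \tfrac12\,\tfrac{q^3}{p}$. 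Adding, $D_R(v)=\tfrac12 pq\big(1+\tfrac{q^2}{p^2}\big)= \tfrac12\big(1+ (q/p)^2\big)\,pq$. Since $pq=\mathrm{area}(R)$ and $q/p\le C$, this gives $D_R(v)\le \tfrac12(1+C^2)\,\mathrm{area}(R)$, hence $D_R(v)=C'\cdot\mathrm{area}(R)$ with $C'=1+C^2$ (absorbing the harmless factor $\tfrac12$, or adjusting the orientation of the $L$ so that the role of $p$ and $q$ is as stated; in any case the bound $C'=1+C^2$ holds). One should also double-check the degenerate and boundary behavior: the leaves through the bottom-left corner and along the diagonal are the codimension-one exceptional set on which $v$ is merely piecewise differentiable, which is exactly what the definition of a partial measured foliation permits, so no genuine smoothness is lost.

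The only real subtlety — and the step I expect to require the most care — is the matching condition \eqref{eq:coord-change} along the diagonal: one must verify that choosing the slope $\lambda=q/p$ on $T_2$ makes $v$ globally continuous and that the transverse measures induced from the two sides of the diagonal coincide, so that the two charts $v|_{T_1}$ and $v|_{T_2}$ genuinely assemble into a single partial measured foliation of $R$ in the sense of the definition, rather than two unrelated foliations. Once that compatibility is confirmed, the surjectivity of the tangent map and the Dirichlet computation are immediate, and the stated proportionality constants on the top and left sides fall out of the explicit formulas. I would also remark that the freedom to take the leaves bending the other way (joining the top side to the left side via a vertical-then-horizontal path) gives the mirror-image foliation, which will be the version actually invoked when gluing rectangles to annuli in \S\ref{sec:sufficient-cond}.
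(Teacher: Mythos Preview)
Your approach is different from the paper's and, once a geometric slip is fixed, correct. The paper does not build $L$-shaped leaves: it simply takes the single linear function $v(x,y)=y-\tfrac{d-c}{b-a}\,x$ on the translated rectangle, so the leaves are parallel straight lines of slope $(d-c)/(b-a)$ joining the left side to the top side, and $|\nabla v|^2 \equiv 1+\big(\tfrac{d-c}{b-a}\big)^2$ gives $D_R(v)=\big[1+(\tfrac{d-c}{b-a})^2\big]\,\mathrm{area}(R)$ in one line, with no triangular charts and no matching condition to verify.

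Your $L$-shaped construction also works and even yields the sharper constant $\tfrac12(1+(q/p)^2)$, but two details are wrong as written. First, the displayed formula $\min\{y,\,(q-y)+\tfrac{q}{p}x\}$ does not describe $L$-shaped leaves: the second branch has level curves of slope $q/p$, not vertical lines. Second---and this is the real slip---for horizontal-then-vertical $L$-leaves running from the left edge to the top edge to be pairwise disjoint and foliate all of $R$, their corners must lie on the \emph{anti}-diagonal from $(0,q)$ to $(p,0)$, not on the main diagonal from $(0,0)$ to $(p,q)$; with corners on the main diagonal the $L$-leaves cross one another (the vertical arm of a lower leaf meets the horizontal arm of a higher one). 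With the anti-diagonal the two charts are $v=y$ on the lower-left triangle and $v=q-\tfrac{q}{p}x$ on the upper-right triangle; these agree on the anti-diagonal, so the compatibility condition is trivially satisfied, and your Dirichlet computation then goes through verbatim. After that correction your argument is complete, but the paper's single linear $v$ is shorter and sidesteps the matching issue entirely.
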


\begin{proof}
We place the bottom left vertex of $R$ to the origin by translating such that $R=[0,b-a]\times [0,d-c]$. Let $t\in [0,d-c]$ and note that the line though point $(0,t)$ of the $y$-axis and slope $\frac{d-c}{b-a}$ has equation
$$
y-t=\frac{d-c}{b-a}x.
$$
The function
$$
v(x,y)=t=y-\frac{d-c}{b-a}x
$$
defines a partial measured foliation of $R$ whose leaves connect the top side to the left side of $R$ and whose transverse measure on the left side is the Euclidean measure of mass $d-c$. The transverse measure of the top side of $R$ is the Euclidean measure multiplied by $\frac{d-c}{b-a}$. Finally, we get
$$
D_R(v)=\Big{[}1+\Big{(}\frac{d-c}{b-a}\Big{)}^2\Big{]}(d-c)(b-a)
$$
\end{proof}

\begin{figure}[h]
\leavevmode \SetLabels
\endSetLabels
\begin{center}
\AffixLabels{\centerline{\epsfig{file =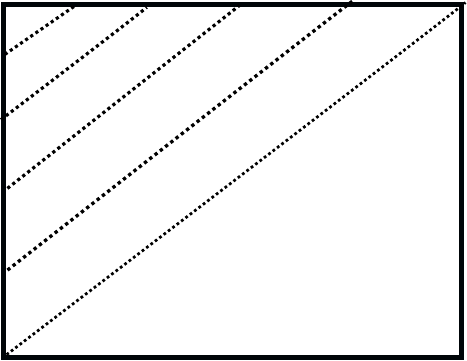,bb= 0 0 200 200,width=6cm,angle=0}}}
\vspace{-20pt}
\end{center}
\label{pants}
\caption{The corner foliation of a rectangle that connects the vertical and horizontal sides.} 
\end{figure}

We also need a lemma that constructs a partial measured foliation of a parallelogram.

\begin{lem}
\label{lem:parallelogram-fol}
Let $Q$ be a parallelogram in the complex plane $\mathbb{C}$ with vertices $(0,0)$, $(a,0)$, $(b,c)$
and $(b+a,c)$. 
Then, there exists a partial measured foliation on $Q$ given by
$$
v:Q\to\mathbb{R}
$$
such that its leaves connect the left side to the right side of $Q$, the transverse measure to the left and the right sides is the Euclidean measure of their projections to a vertical line, and
$$
D_R(v)=area(Q).
$$
\end{lem}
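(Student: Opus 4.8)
The plan is to mimic the construction in Lemma \ref{lem:L-fol}, but now the leaves must go from one slanted side of $Q$ to the opposite slanted side rather than meeting a corner. First I would place $Q$ with one vertex at the origin as in the statement, so the left side is the segment from $(0,0)$ to $(b,c)$ and the right side is the segment from $(a,0)$ to $(a+b,c)$; both slanted sides are parallel translates of the vector $(b,c)$, and the horizontal sides have length $a$. The natural guess is to foliate $Q$ by horizontal segments $y=\text{const}$: for each $t\in[0,c]$, the line $y=t$ meets $Q$ in a segment of Euclidean length exactly $a$, running from a point of the left side to a point of the right side. Thus the function $v(x,y)=y$ has leaves that connect the left side to the right side, as required.

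Next I would check the two claims about transverse measures. The left side is parametrized by $s\mapsto(sb/c,\,s)$ for $s\in[0,c]$ (writing the side as $\{(xb/c,x):0\le x\le c\}$ only makes sense if I use the actual slope, so more precisely parametrize by the height coordinate $y$ ranging over $[0,c]$). Under $v(x,y)=y$ the transverse measure assigned to a subarc of the left side is $\int|dv|=\int|dy|$ along that arc, which is exactly the length of the projection of that arc to the vertical axis; the same computation applies verbatim to the right side. So the transverse measure to each slanted side is the Euclidean measure of its vertical projection, as stated. This is the point where one must be a little careful that "Euclidean measure of the projection" is the intended normalization and not the arclength measure on the slanted side itself — but that is precisely what $v(x,y)=y$ gives.

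Then I would compute the Dirichlet integral. Since $v(x,y)=y$, we have $\partial v/\partial x=0$ and $\partial v/\partial y=1$, so the integrand $(\partial v/\partial x)^2+(\partial v/\partial y)^2$ equals $1$ identically, and hence
$$
D_Q(v)=\int_Q 1\,dx\,dy = \mathrm{area}(Q).
$$
(The statement writes $D_R(v)$; this is a harmless typo for $D_Q(v)$, matching the pattern of the two preceding lemmas.) Finally I should confirm that $v$ genuinely defines a partial measured foliation in the sense of the paper's definition: the tangent map of $v$ is surjective everywhere (its differential is $dy\ne0$), each level set $v^{-1}(t)=\{y=t\}\cap Q$ is a single connected differentiable arc or empty, and $Q$ is a closed Jordan domain with piecewise differentiable boundary. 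I expect no real obstacle here — the only thing requiring attention is bookkeeping about which "side" is which and the normalization of the transverse measures; the horizontal foliation $v=y$ does all the work at once, exactly as the constant-slope foliation did in Lemma \ref{lem:L-fol} (the present case is the degenerate slope-zero version on a sheared domain).
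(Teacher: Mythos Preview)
Your proposal is correct and follows exactly the same approach as the paper: define $v(x,y)=y$, observe that the horizontal level sets connect the left and right slanted sides with transverse measure $|dy|$ on each (i.e., the Euclidean measure of the vertical projection), and note that the gradient has unit norm so $D_Q(v)=\mathrm{area}(Q)=ac$. Your write-up is simply more detailed than the paper's terse version.
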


\begin{proof}
The function
$$
v(x,y)=y
$$
defines a partial measured foliation of $Q$ whose leaves connect the left side to the right side of $Q$ and whose transverse measure on the sides is the Euclidean measure of their projections to the vertical lines whose total mass is $c$. Since $(\partial u/\partial x)^2+(\partial u/\partial y)^2=1$, we get
$$
D_Q(v)=area(Q)=ac.
$$
\end{proof}

In addition, we will need partial measured foliations of trapezoids whose one side is orthogonal both base sides. 

\begin{lem}
\label{lem:trapezoid-foliation}
Let $Q$ be a trapezoid whose bases are vertical segments of lengths $a$ and $a_1$, and one side is orthogonal to both bases with length at least $D>0$. Then there exists a partial measured lamination of $Q$ whose leaves are Euclidean lines connecting the two base sides and whose transverse measure on one base side is the Euclidean measure and on the other base side is a multiple of the Euclidean measure. Moreover, the Dirichlet integral is, at most, a constant multiple of the area of $Q$ when $D>0$ is fixed.
\end{lem}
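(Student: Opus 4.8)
\emph{Proof sketch.} The plan is to exhibit $Q$ as a ``vertical stretch'' of a rectangle and to read the foliation off that map. Translating and choosing coordinates, I may take the orthogonal side of $Q$ to be the horizontal segment from $(0,0)$ to $(w,0)$ (whence $w\ge D$), the two vertical bases running upward from its endpoints, with the base carrying the Euclidean transverse measure taken to be the shorter one; thus $Q$ has vertices $(0,0)$, $(0,a)$, $(w,0)$, $(w,a_1)$ with $a\le a_1$ and slanted side from $(0,a)$ to $(w,a_1)$. Writing $h(x)=a+\frac{a_1-a}{w}\,x$ for the height of $Q$ above the point $x\in[0,w]$, I set
$$
v(x,y)=\frac{a\,y}{h(x)} .
$$
For $t\in[0,a]$ the level set $\{v=t\}$ is the Euclidean segment $y=\frac{t}{a}h(x)$ running from the point $(0,t)$ of the $a$-base to the point $(w,\frac{t}{a}a_1)$ of the $a_1$-base, and these segments are precisely the leaves of a foliation of $Q$; on the $a$-base one has $v(0,y)=y$, so the transverse measure $|dv|$ there is the Euclidean measure, and on the $a_1$-base one has $v(w,y)=\frac{a}{a_1}\,y$, so $|dv|$ is a constant multiple of the Euclidean measure. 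Hence the qualitative assertions are immediate, and it remains to estimate $D_Q(v)$.

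The computation of $D_Q(v)$ is elementary: with $h'=\frac{a_1-a}{w}$ one has $v_y=a/h$ and $v_x=-ay\,h'/h^2$, so $|\nabla v|^2=\frac{a^2}{h^2}\big(1+\frac{(h')^2y^2}{h^2}\big)$; integrating first in $y$ over $[0,h(x)]$ and then in $x$ over $[0,w]$, and using $\int_0^w \frac{dx}{h(x)}=\frac{1}{h'}\log\frac{a_1}{a}$ (with limiting value $w/a$ when $a=a_1$), one obtains
$$
D_Q(v)=a^2\Big(1+\frac{(h')^2}{3}\Big)\frac{\log(a_1/a)}{h'}=\frac{a^2w\log(a_1/a)}{a_1-a}+\frac{a^2(a_1-a)\log(a_1/a)}{3w},
$$
while $\mathrm{area}(Q)=\int_0^w h=\frac{(a+a_1)w}{2}$. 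Since $\log(a_1/a)\le(a_1-a)/a$, the first summand is at most $aw\le\mathrm{area}(Q)$. For the second summand I would use $w\ge D$ together with the standing bounded-geometry hypothesis: there the base lengths $a\le a_1$ are pinched between two positive constants, so $a^2(a_1-a)\log(a_1/a)$ is bounded by an absolute constant, whereas $\mathrm{area}(Q)\ge\tfrac12 a_1w\ge\tfrac12 a_1D$ is bounded below, and therefore the second summand is at most a constant depending only on $D$ times $\mathrm{area}(Q)$. Hence $D_Q(v)\le C(D)\,\mathrm{area}(Q)$, as claimed.

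The one point requiring care is precisely this uniformity. The factor $1/w$ in the second summand of $D_Q(v)$ is what forces the hypothesis $w\ge D$, and that summand also grows with the size of the bases, so the argument genuinely relies on the trapezoids arising from the decomposition in \S\ref{sec:const-rect} having uniformly bounded bases; I would make this dependence explicit. I would also record the degenerate case $a=a_1$ separately: there $Q$ is a rectangle, $h'=0$, $v(x,y)=y$, and the displayed formula reduces to $D_Q(v)=\mathrm{area}(Q)$, in agreement with Lemma~\ref{lem:vert-rect-fol}.
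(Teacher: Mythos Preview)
Your proposal is correct and follows essentially the same approach as the paper: the same positioning of $Q$, the same function $v(x,y)=ay/h(x)$, and the same closed-form Dirichlet integral. You are actually a bit more careful than the paper in distinguishing the side length $w\ge D$ from the lower bound $D$, in making explicit the dependence on an upper bound for the base lengths (which the paper also uses but states only at the end), and in recording the degenerate case $a=a_1$.
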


\begin{proof}
We position $Q$ to have one base on the positive $y$-axis starting at the origin and the other base starting at point $D>0$ on the $x$-axis and orthogonal to the $x$-axis while staying in the upper half-plane. Let $0\leq t\leq 1$. We connect the point $ta$ on the left base side of $Q$ to the point at height $ta_1$ on the right base side by Euclidean line. The equation of the line is
$$
y-ta=\frac{ta_1-ta}{D}x.
$$
From this formula, we get $t=y/(\frac{a_1-a}{D}x+a)$. Define
$$
v(x,y)=at=\frac{ay}{\frac{a_1-a}{D}x+a}.
$$
A straightforward differentiation and integration gives
$$
D_Q(v)=\frac{a^2D}{a_1-a}\log\frac{a_1}{a}+\frac{a^2(a_1-a)}{3D}\log\frac{a_1}{a}
$$
which implies the uniform bound when $D>0$ and $a_1\geq a>0$ are bounded from the above.
\end{proof}

We will use the above four lemmas to construct a partial measured foliation on $X$ representing $\mu$. Since we used rectangles and parallelograms to define the foliations above, we will conformally represent the rectangles and annuli from \S\ref{sec:const-rect} as Euclidean rectangles and parallelograms in the complex plane.

\begin{figure}[h]
\leavevmode \SetLabels
\L(.3*.2) $A$\\
\L(.47*.38) $i$\\
\L(.44*.85) $e^{\ell}i$\\
\L(.5*.58) $\ell$\\
\L(.63*.39) $m$\\
\L(.5*.18) $\theta_t$\\
\L(.53*.37) $t$\\
\endSetLabels
\begin{center}
\AffixLabels{\centerline{\epsfig{file =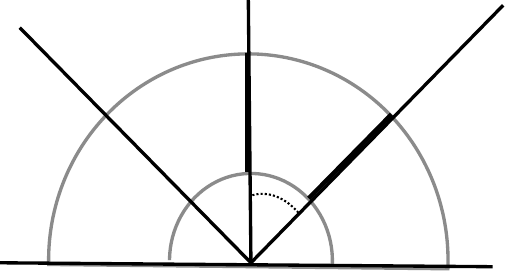,bb= 0 0 240 130,width=8.0cm,angle=0} }}
\vspace{-20pt}
\end{center}
\caption{The lengths of the corresponding arcs.} 
\end{figure}

\begin{lem}
 \label{lem:dist-cuff}
Let $A\subset\mathbb{H}$ be the set between the two hyperbolic geodesics orthogonal to the $y$-axis at points $i$ and $e^{\ell}i$. Let $r$ be the sets of points to the right of $y$ on the hyperbolic distance $t>0$ from the $y$-axis. 
 Let $0<\theta_t <\frac{\pi}{2}$ be the Euclidean angle that ray $r$ subtends with the $y$-axis at $0$. Then 
 $$ 
 \cos\theta_t =\frac{1}{\cosh t}
 $$
 and the hyperbolic length $m$ of $r\cap A$ satisfies
 $$
 m=\frac{\ell}{\cos \theta_t}.
 $$
 \end{lem}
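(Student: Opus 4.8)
The plan is to put everything in explicit coordinates on the upper half-plane model $\mathbb{H}$ with metric $\frac{|dz|}{\operatorname{Im} z}$ and reduce the statement to two one-variable integrals. First I would identify the curves involved. The geodesics of $\mathbb{H}$ that cross the imaginary axis orthogonally are exactly the Euclidean semicircles centered at $0$; the two passing through $i$ and $e^{\ell}i$ are $|z|=1$ and $|z|=e^{\ell}$, so $A=\{\,z\in\mathbb{H}:1\le|z|\le e^{\ell}\,\}$. The set $r$ of points at fixed hyperbolic distance $t$ from the imaginary axis, lying on the side of positive real part, is a hypercycle of that geodesic; since the isometries $z\mapsto\lambda z$ ($\lambda>0$) preserve the imaginary axis and act transitively on each Euclidean ray from $0$, this hypercycle is the ray $r=\{\,\rho(\sin\theta_t+i\cos\theta_t):\rho>0\,\}$, where $\theta_t\in(0,\tfrac{\pi}{2})$ is its Euclidean angle with the positive imaginary axis (the sign of the real part is what forces $\theta_t>0$).

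To prove $\cos\theta_t=1/\cosh t$ I would compute the hyperbolic distance from a point $z=\rho(\sin\theta_t+i\cos\theta_t)$ of $r$ to the imaginary axis. The common perpendicular from $z$ to that axis is the geodesic semicircle $|w|=\rho$, meeting the axis at $i\rho$; parametrizing it by $w(s)=\rho e^{is}$ one gets $ds_{\mathrm{hyp}}=ds/\sin s$, with $z$ at $s=\tfrac{\pi}{2}-\theta_t$ and $i\rho$ at $s=\tfrac{\pi}{2}$. Hence
$$
t=\int_{\pi/2-\theta_t}^{\pi/2}\frac{ds}{\sin s}=-\log\tan\!\Big(\frac{\pi}{4}-\frac{\theta_t}{2}\Big),
$$
so $e^{-t}=\tan(\tfrac{\pi}{4}-\tfrac{\theta_t}{2})$ and $e^{t}=\tan(\tfrac{\pi}{4}+\tfrac{\theta_t}{2})$. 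Adding these and using $\tan(\tfrac{\pi}{4}+\tfrac{\theta_t}{2})+\tan(\tfrac{\pi}{4}-\tfrac{\theta_t}{2})=\dfrac{\sin(\pi/2)}{\cos(\tfrac{\pi}{4}+\tfrac{\theta_t}{2})\cos(\tfrac{\pi}{4}-\tfrac{\theta_t}{2})}=\dfrac{2}{\cos\theta_t}$ gives $\cosh t=1/\cos\theta_t$, i.e.\ $\cos\theta_t=1/\cosh t$. (Equivalently one may observe $\operatorname{Re}z/\operatorname{Im}z=\tan\theta_t$ and invoke the standard distance-to-a-geodesic formula to get $\sinh t=\tan\theta_t$, with the same conclusion.)

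For the length $m$ of $r\cap A$, the description of $A$ above shows $r\cap A$ is the sub-ray on which $\rho$ runs over $[1,e^{\ell}]$. Parametrizing by $\rho$, the point $z(\rho)=\rho(\sin\theta_t+i\cos\theta_t)$ has $|z'(\rho)|=1$ and $\operatorname{Im}z(\rho)=\rho\cos\theta_t$, so
$$
m=\int_{1}^{e^{\ell}}\frac{d\rho}{\rho\cos\theta_t}=\frac{1}{\cos\theta_t}\log\rho\,\Big|_{1}^{e^{\ell}}=\frac{\ell}{\cos\theta_t},
$$
which is the asserted identity.

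I do not anticipate a genuine obstacle here: once the Euclidean rays and origin-centered semicircles are matched with the hyperbolic hypercycle, geodesic, and region $A$, the entire content is the two integrals $\int ds/\sin s$ and $\int d\rho/\rho$ together with one addition-formula manipulation. The only points needing care are the orientation convention hidden in ``to the right of $y$'' (which pins down the sign of the real part, hence $0<\theta_t<\tfrac{\pi}{2}$) and keeping straight which radius corresponds to which of $i$ and $e^{\ell}i$.
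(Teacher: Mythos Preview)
Your proof is correct and essentially matches the paper's approach: for the length formula $m=\ell/\cos\theta_t$ you carry out precisely the same parametrization $z(\rho)=\rho e^{i(\pi/2-\theta_t)}$ and integral $\int_1^{e^\ell}d\rho/(\rho\cos\theta_t)$ that the paper does. The only difference is that the paper cites Beardon for $\cos\theta_t=1/\cosh t$, whereas you supply a self-contained derivation via the integral $\int ds/\sin s$ along the orthogonal geodesic---this is a welcome addition but not a genuinely different route.
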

 
 \begin{proof}
 The first formula is given in Beardon \cite[\S 7.20]{Bear}. To prove the second formula, note that $z=se^{(\frac{\pi}{2}-\theta )i}$ for $1\leq s\leq e^{\ell}$ parametrizes $r\cap A$. Since $Im(se^{(\frac{\pi}{2}-\theta )i})=s\cos\theta$, a direct integration gives the second formula (see Figure 7).
 \end{proof}

We prove that, when $X$ has bounded geometry, the necessary condition (\ref{eq:sum-intersections}) for $\mu\in {ML}_{int}(X)$ is also sufficient which is the main result of this section.

\begin{thm}
\label{thm:holq-ml-bdd}
Let $X$ be an infinite Riemann surface with bounded geometry and $\mu\in {ML}(X)$. If 
$$
\sum_n [i(\alpha_n,\mu )^2+i(\beta_n,\mu )^2]+\sum_s i(\beta_s,\mu )^2 <\infty
$$
then $\mu\in {ML}_{int}(X)$. 
\end{thm}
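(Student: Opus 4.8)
The strategy is to build the required integrable partial measured foliation $\mathcal{F}$ piece by piece on the rectangles and annuli associated to $\mu$ in \S\ref{sec:const-rect}, using the four model foliations from Lemmas \ref{lem:vert-rect-fol}--\ref{lem:trapezoid-foliation}, and then to glue them so that the transverse measures match along the shared boundary arcs. First I would invoke Proposition \ref{prop:weights-int-num}: the hypothesis $\sum_n[i(\alpha_n,\mu)^2+i(\beta_n,\mu)^2]+\sum_s i(\beta_s,\mu)^2<\infty$ is equivalent to the weights $\{w_k\}$ assigned by $\mu$ to the rectangles and the inner and outer annuli lying in $\ell^2$. Thus it suffices to produce, on each piece, a partial measured foliation realizing the prescribed weight, whose Dirichlet integral is bounded by a uniform constant times (weight)${}^2$ (plus possibly a uniform constant times the Euclidean area of that piece, which is itself uniformly bounded by bounded geometry and summable because $\ell^2\subset\ell^\infty$ forces only finitely many pieces to carry weight above any threshold — more precisely one must be careful here, see below).

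The construction proceeds as follows. Each cuff annulus, an $m$- or $2m$-collar neighborhood of a geodesic of bounded length, is conformally a Euclidean cylinder of bounded modulus; cutting it open and rescaling so the core has length equal to the weight $w$ carried through it presents it as a Euclidean rectangle $[0,w]\times[0,h]$ with $h$ comparable to the modulus, i.e.\ $h\asymp 1$, and Lemma \ref{lem:vert-rect-fol} foliates it by horizontal lines with transverse measure the Euclidean measure on vertical fibers and $D=wh\asymp w\cdot 1$. Wait — the relevant Dirichlet bound must be in terms of $w^2$, so instead I would take the rectangle with the weighted side of length $w$ playing the role of the height and the bounded side as the base, getting $D\asymp w^2 + w$; to kill the linear term one uses the $\ell^2$ hypothesis together with the fact that only pieces with $w$ bounded below contribute a linear term comparably to their square, while pieces with small $w$ are handled by a rectangle of area $\asymp w\cdot w=w^2$. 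For the rectangles around orthogeodesics in the pairs of pants and around hexagon sides, which have bounded Euclidean dimensions, Lemma \ref{lem:parallelogram-fol} (or Lemma \ref{lem:L-fol} for the corner pieces where a leaf turns from a vertical side to a horizontal side, and Lemma \ref{lem:trapezoid-foliation} where the two ends carry different weights, as happens at a cuff meeting several orthogeodesics) produces the local foliation with $D$ a uniform multiple of the area, hence a uniform multiple of $w^2$ once the piece is rescaled so that the transverse dimension matches $w$. Lemma \ref{lem:dist-cuff} is the tool that controls how a leaf entering a cuff annulus at a given Euclidean angle crosses it, ensuring the lengths (and hence the comparisons between weights on the two sides of a matching arc) stay uniformly bounded.

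The gluing is the technical heart. The Dehn--Thurston train track $\Theta$ (or its hexagon analogue) organizes which rectangles abut which annuli and in what cyclic order around each cuff; the switch conditions for $\mu$ on $\Theta$ say exactly that the total weight entering a cuff annulus from the rectangles on one side equals the weight of the core, which equals the total leaving on the other side. I would fix, once and for all in each pair of pants and each hexagon, a normalization of the foliations on the finitely many rectangles so that the induced one-dimensional measures on the arcs of $\partial(\text{annulus})\cap\partial(\text{rectangle})$ agree with the measure the annulus foliation puts there — this is where condition (\ref{eq:coord-change}) $v_i=\pm v_j+const$ is verified, and it is forced by matching total masses plus choosing the model foliations (Lemmas \ref{lem:L-fol}, \ref{lem:trapezoid-foliation}) whose transverse measure on a boundary side is a prescribed multiple of Euclidean measure. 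Once matched, summing the local Dirichlet integrals gives
\[
D_X(\mathcal{F})\le C\Big(\sum_k w_k^2 + \sum_j \mathrm{area}(\text{bounded piece }j)\cdot[\text{piece carries positive weight}]\Big)<\infty,
\]
since each bounded piece has area $\le C^2$ and only finitely many carry weight exceeding any fixed $\epsilon$, while those with tiny weight contribute through rectangles of area $\asymp w_k^2$. Finally, the straightening of the horizontal trajectories of $\mathcal{F}$ recovers a measured lamination homotopic to $|\mu|$ with the correct transverse measure, so $\mu=\mu_{\mathcal{F}}\in ML_{int}(X)$.

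\textbf{Main obstacle.} The delicate point is not any single Dirichlet computation but the consistency of the gluing: one must choose the constants in the corner and trapezoid foliations (Lemmas \ref{lem:L-fol}, \ref{lem:trapezoid-foliation}) simultaneously across all the finitely many rectangles in a given pants or hexagon so that the transverse measures agree on every shared arc, while keeping every local Dirichlet integral a \emph{uniform} multiple of the square of the weight carried. Because a single cuff can be met by several orthogeodesics and by several hexagon sides with a priori different weights, the bookkeeping of the switch relations (controlled by Proposition \ref{prop:int-weights} and the bounded combinatorics of $\Theta$) is what makes the uniform constant $C$ exist; verifying that the linear-in-$w$ error terms are dominated by the $\ell^2$ sum is the one place where the hypothesis is used in its full strength rather than just through $\ell^2\subset\ell^\infty$.
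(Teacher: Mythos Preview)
Your overall plan --- build $\mathcal{F}$ on the rectangles and annuli of \S\ref{sec:const-rect}, use the model foliations of Lemmas \ref{lem:vert-rect-fol}--\ref{lem:trapezoid-foliation}, glue along shared arcs, and invoke Proposition \ref{prop:weights-int-num} to reduce the hypothesis to $\ell^2$ weights --- is exactly the paper's, and you have correctly located the gluing as the hard part. But there is a genuine missing idea: you never address the \emph{twisting} around the cuffs. The data ``total weight entering the annulus from each rectangle on either side'' does not determine the foliation inside the annulus; a leaf crossing $\alpha_n$ may wrap around many times before exiting, and this wrapping is recorded by the edges of $\Theta$ lying \emph{on} the cuff, not by the rectangle edges meeting it. Its contribution to the Dirichlet integral is controlled only through $i(\beta_n,\mu)$, not through $i(\alpha_n,\mu)$ or the rectangle weights alone. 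The paper handles this by first pre-composing with a power of the Dehn twist around each $\alpha_n$ (a uniformly quasiconformal map, hence harmless for the conclusion) to normalize the direction of wrapping, and then building the annulus foliation with an explicit twist parameter $a\le i(\beta_n,\mu)$; after scaling, the annulus contributes at most $C\bigl(a^2+\sum_l b_l^2\bigr)$, which is summable by the hypothesis. Your sketch treats the annulus as a pure pass-through matching incoming to outgoing and would fail for laminations with large twists.

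Secondarily, the ``linear term'' discussion is confused and unnecessary. The clean mechanism (and what the paper does) is: first foliate each piece with \emph{fixed} geometry --- transverse measure equal to the Euclidean measure, total mass $2m^*$ on a rectangle side --- so that each local $D(v)$ is bounded by a uniform constant (the area of the piece, bounded by bounded geometry); \emph{then} replace $v$ by $v^*=\frac{w}{2m^*}\,v$. The Dirichlet integral scales by the square of the factor, giving $D(v^*)\le C\,w^2$ on the nose, with no linear remainder and no need to rescale the domain. Your attempt to absorb a putative linear term via ``only finitely many pieces carry weight above any fixed $\epsilon$'' does not work as written: infinitely many pieces can carry positive weight, so a sum of uniformly bounded area contributions over all such pieces need not be finite.
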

 
\begin{proof}
We construct a partial foliation with finite Dirichlet integral that represents $\mu\in ML_{int}(X)$. To do so, recall that there exists a union of annuli around the cuffs and rectangles around the orthogonal arcs between the cuffs (induced by the combinatorics of $|\mu |$) such that $|\mu |$ can be homotoped into the union (see Proposition \ref{prop:ml-support}). The homotopy is not unique, but the weights on the annuli and rectangles determine the measured lamination $\mu$. 

We first consider a rectangle $R_{i,j}$, corresponding to an orthogeodesic arc $o_{i,j}$ between two cuffs $\alpha_i$ and $\alpha_j$. We allow that $i=j$, i.e., $\alpha_i=\alpha_j$, which happens if a geodesic of $|\mu |$ connects the cuff $\alpha_i$ to itself inside a pair of pants.
The rectangle $R_{i,j}$ corresponding to the orthogeodesic $o_{i,j}$ is foliated by lines equidistant to $o_{i,j}$, which will become the leaves of the partial foliation (see Figure 8). The orthogeodesic $o_{i,j}$ is not completely contained in $R_{i,j}$, but we can assume that the length of $o_{i,j}\cap R_{i,j}$ is bounded below by a positive constant by taking $m>0$ sufficiently small.
Let $C_i\subset U$ and $C_j\subset U$ be the $m$-neighborhoods of the cuffs $\alpha_i$ and $\alpha_j$ that $R_{i,j}$ connect if they are not on the boundary of a planar part, and take them to be $2m$-neighborhoods on the sides where they meet planar parts, if any.

Consider two components $\tilde{C}_i$ and $\tilde{C}_j$ of the lifts $C_i$ and $C_j$ to the upper half-plane $\mathbb{H}$ such that the lifts $\tilde{\alpha}_i$ and $\tilde{\alpha}_j$ of $\alpha_i$ and $\alpha_j$ are orthogonal to the $y$-axis and one lift the orthogeodesic between $\alpha_i$ and $\alpha_j$ is connecting $\tilde{\alpha}_i$ and $\tilde{\alpha}_j$ (see Figure 8). 

\begin{figure}[h]
\leavevmode \SetLabels
\L(.52*.25) $\tilde{R}_{i,j}$\\
\L(.27*.5) $\tilde{\alpha}_{i}$\\
\L(.36*.05) $\tilde{\alpha}_{j}$\\
\L(.505*.87) $\phi_0$\\
\L(.57*.13) $\theta$\\
\endSetLabels
\begin{center}
\AffixLabels{\centerline{\epsfig{file =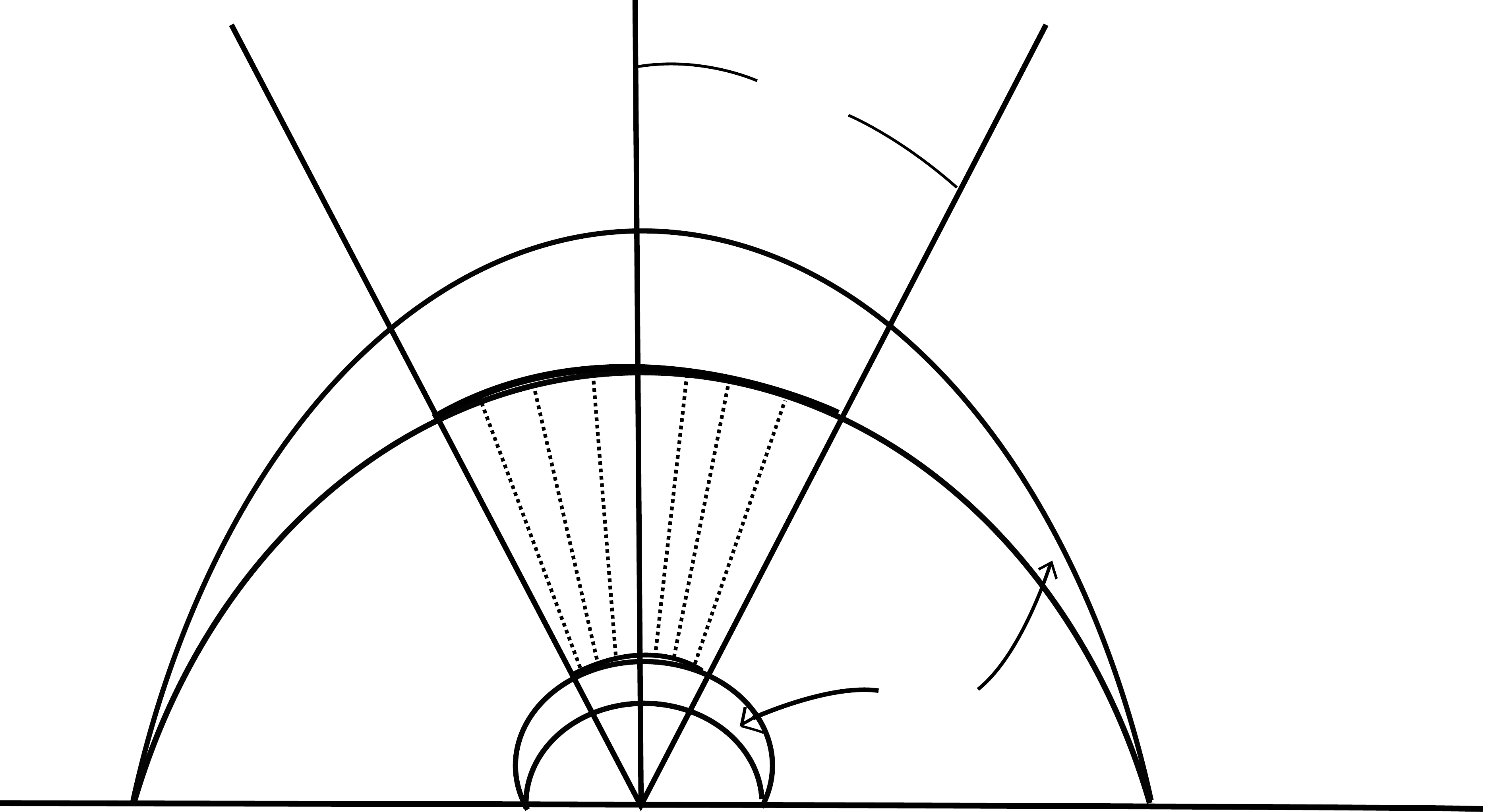,bb= 0 0 1800 1000,width=8.0cm,angle=0} }}
\vspace{-20pt}
\end{center}
\caption{The foliation of $\tilde{R}_{i,j}$ by the arcs of equidistant points to the $y$-axis (dotted lines).} 
\end{figure}

The component $\tilde{R}_{i,j}$ connecting $\tilde{C}_i$ and $\tilde{C}_j$ is between two Euclidean rays starting at $0$ that subtend angles $\phi_0$ with the $y$-axis such that (see Beardon \cite[\S7.20]{Bear})
$$
\sin\phi_0 =\tanh m^*.
$$
Let $\theta$ be the angle between $\partial \tilde{C}_i$ and $\tilde{\alpha}_i$ and thus it satisfies $\sin\theta =\tanh m$. We introduce a partial measured foliation of
 $\tilde{R}_{i,j}$ whose arcs are equidistant points to the $y$-axis (see Figure 8) and whose transverse measure on any geodesic arc orthogonal to the $y$-axis is the hyperbolic length on the arc.
 
Let $\log z=s+{it}$ be the polar coordinates for $z\in \tilde{R}_{i,j}$, where $\frac{\pi}{2}-\phi_0\leq t\leq\frac{\pi}{2}+\phi_0$ and $-\ell (o_{i,j})/2<s<\ell (o_{i,j})/2$. The inequalities for $s$ are strict since the image of $\tilde{R}_{i,j}$ does not meet the vertical sides of the rectangle  $[\frac{\pi}{2}-\phi_0, \frac{\pi}{2}+\phi_0]\times [-\ell (o_{i,j})/2,\ell (o_{i,j})/2]$. If $m(z)$ is the hyperbolic distance from $z$ to the $y$-axis, then the above formula gives
$$
|\cos\phi |=\tanh m(z).
$$
The function
$$
f(s+{\bf i} t)=s+{\bf i}\cdot\mathrm{sgn} (t-\pi /2)\cdot\tanh^{-1} [\cos (t)]
$$ 
is quasiconformal on $[\frac{\pi}{2}-\phi_0, \frac{\pi}{2}+\phi_0]\times [-\ell (o_{i,j})/2,\ell (o_{i,j})/2]$, where $\mathrm{sgn} (x)$ is the sign of $x$ and the quasiconformal constant continuously depends on $\phi_0>0$ (which in turn depends on $m^*$). Finally, the composition of $\log z$ with $f(s+{\bf i}t)$ is an isometry on the geodesic arcs orthogonal to the $y$-axis. Therefore, the image of $\tilde{R}_{i,j}$ is in a rectangle $R$ with the height $2m^*$ and the length $\ell (o_{i,j})$. We define a partial foliation of $R$ by
$$
{v}(s+{\bf i}t)=t.
$$
The leaves are horizontal lines, the Dirichlet integral $D_R(v)\leq 2m^*\ell (o_{i,j})$ and the pull-back of $\tilde{v}_{i,j}$ to $\tilde{R}_{i,j}$ gives a partial measured foliation with leaves as in Figure 8 and the Dirichlet integral bounded above by a constant multiple of the hyperbolic area on $\tilde{R}_{i,j}$ (the Dirichlet integral of the pre-composition of a function with a $K$-quasiconformal map is at most $K$ times the Dirichlet integral of the original function). The transverse measure of the foliation is given by the hyperbolic length on the arc orthogonal to the $y$-axis. We denote by $v_{i,j}:R_{i,j}\to\mathbb{R}$ the partial measured foliation on the rectangle $R_{i,j}\subset X$ with the desired properties.

We form a partial foliation of $C_i$. 
There is a uniform bound on the number of orthogeodesics meeting $\alpha_i$ from each of its sides and the distance between the foots of the orthogeodesics on each sides are bounded below and above by two positive constants. 
Let $\tilde{C}_i$ be a single component lift of $C_i$ to the upper half-plane such that $\tilde{\alpha}_i$ is the $y$-axis. Take the fundamental region for the covering $\tilde{C}_i$ of $C_i$ to be between the semicircles $|z|=1$ and $|z|=e^{{\bf i}\ell_X(\alpha_i)}$ in the upper half-plane $\mathbb{H}$. The boundary of the fundamental region consists of two subarcs of $|z|=1$ and $|z|=e^{{\bf i}\ell_X(\alpha_i)}$, and two subarcs of two Euclidean rays starting at $0$ that subtend angles $\theta$ on both sides of the $y$-axis. The lifts $\tilde{R}_{i,j}$ meet $\tilde{C}_i$ at the boundary arcs on the Euclidean rays from $0$ (see Figure 9). 
\begin{figure}[h]
\leavevmode \SetLabels
\L(.32*.2) $\tilde{R}_{i,j}$\\
\L(.56*.23) $\tilde{R}_{i,k}$\\
\L(.47*.5) $\tilde{C}_i$\\
\L(.515*.86) $\theta$\\
\endSetLabels
\label{fig9}
\begin{center}
\AffixLabels{\centerline{\epsfig{file =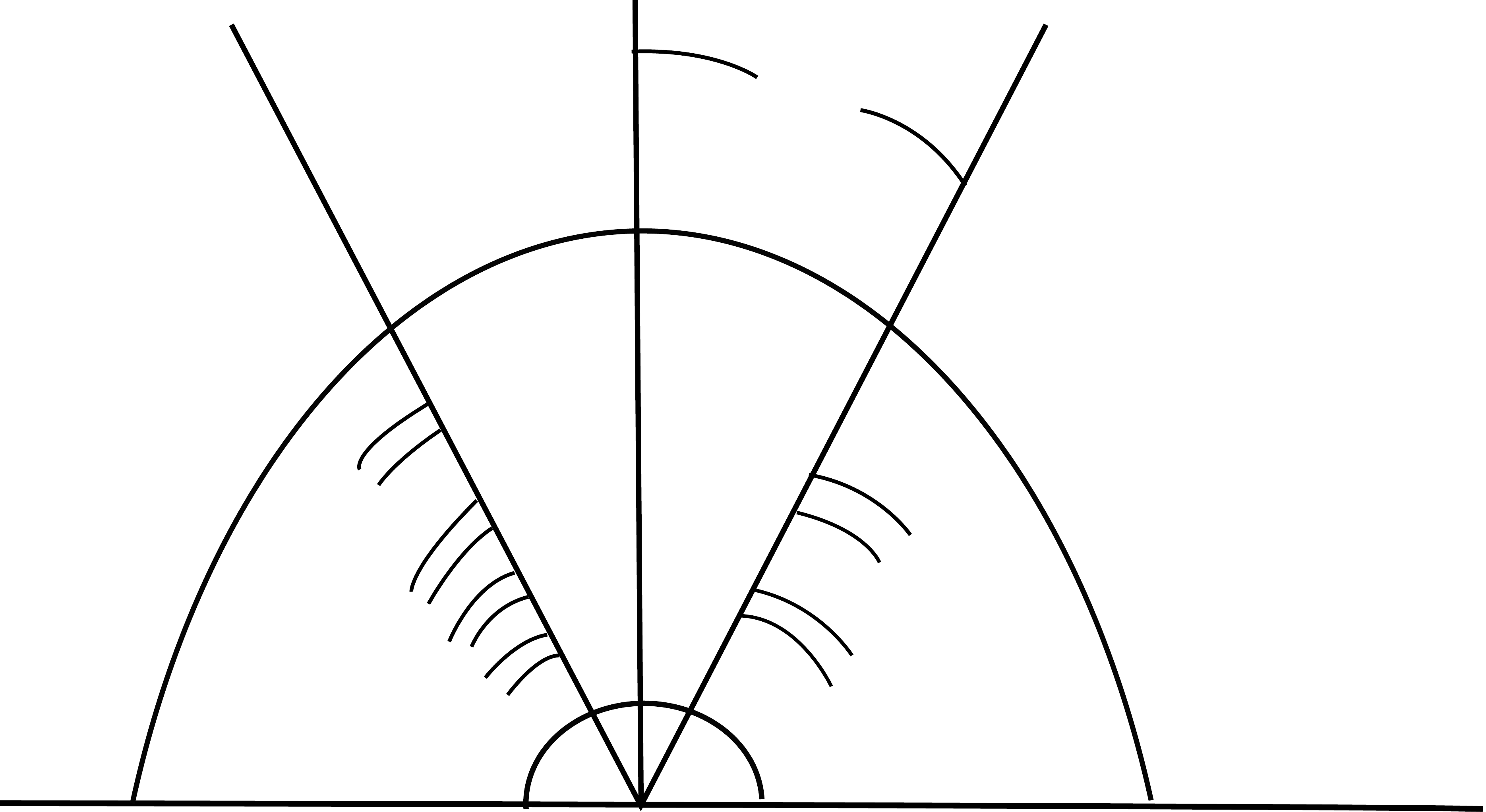,bb= 0 0 1800 1000,width=8.0cm,angle=0} }}
\vspace{-20pt}
\end{center}
\caption{The collar ($m$-neighborhood) around $\alpha_i$ is lifted to $\tilde{C}_i$.} 
\end{figure}

Let $Q$ be the image of the fundamental region under the conformal map $z\mapsto \log z$, where the argument of $z\in \mathbb{H}$ is taken in $(0,\pi )$ (see Figure 11). 
Then $Q=[0,\ell_X(\alpha_i)]\times [\frac{\pi}{2} -\theta ,\frac{\pi}{2}+\theta ]$ and the identification by the Euclidean translation $z\mapsto z+1$ of the vertical sides of $Q$ is conformal to the collar $C_i$. The image of $\tilde{R}_{i,j}\cap \tilde{C}_i$ consists of finitely many intervals on the top and the bottom side of $Q$ 
that have length $2m^*$ and are separated by intervals whose Euclidean length is bounded below by a positive constant depending on $m^*>0$. The intervals can be as small as needed by choosing $m^*>0$ small enough. Since the hyperbolic distance between ${\bf i}y_1\in\mathbb{H}$ and ${\bf i}y_2\in\mathbb{H}$ is $\log y_2/y_1$, it follows that the Euclidean measure of the image of $\tilde{R}_{i,j}\cap \tilde{C}_i$ on the horizontal boundary of $Q$ (under the map $\log z$) is given by the hyperbolic length of the projection of $R_{i,j}\cap C_i$ onto $\alpha_i$ (whose mass is $2m^*$).


We first deal with the case when some leaves of $|\mu |$ are homotoped to enter the outer annulus of $C_i$ through a rectangle $R_{i,j}$ and leave it on the same side through a rectangle $R_{i,k}$ different from $R_{i,j}$. We consider the lifts $\tilde{C}_i$, $\tilde{R}_{i,j}$ and $\tilde{R}_{i,k}$ to the universal cover and their images under $\log z$. In the $\log z$ coordinates, the distance between $\tilde{R}_{i,j}$ and $\tilde{R}_{i,k}$ is bounded below by a universal constant $D$. The $\mu$-measure of the geodesics in $R_{i,j}$ homotoped into the outer annulus around $\alpha_i$ that leaves through $R_{i,k}$ is denoted by $s_{j,k}^i$. The ratios $a_j=s_{j,k}^i/\mu (R_{i,j})$ and $a_k=s_{j,k}^i/\mu (R_{i,k})$ can be different. The proportion of the vertical lines of $\tilde{R}_{i,j}$ that leaves through $\tilde{R}_{i,k}$ is $2m^*a_j$ and the proportion of the vertical lines of $\tilde{R}_{i,k}$ that leaves through $\tilde{R}_{i,j}$ is $2m^*a_k$. We extend these parts of $\tilde{R}_{i,j}$ and $\tilde{R}_{i,k}$ into squares with top sides $\tilde{R}_{i,j}\cap\tilde{C}_i$ and $\tilde{R}_{i,k}\cap\tilde{C}_i$, and the side lengths $2m^*a_j$ and $2m^*a_k$. The foliation is extended vertically, going down until it hits the main diagonal, and then extended horizontally until it hits the right vertical side (see Figure 10). The total Dirichlet integral of this part of the foliation is the Euclidean area of these squares by Lemma \ref{lem:vert-rect-fol}. It remains to connect the sides of the squares by a foliation. We make a trapezoid $T_{j,k}$ whose parallel sides are the vertical sides of the squares, as in Figure 10. Using Lemma \ref{lem:trapezoid-foliation}, we form a partial measured foliation of $T_{j,k}$ such that its leaves are the Euclidean lines in $T_{j,k}$ connecting the parallel sides whose transverse measure on the side corresponding to $R_{i,j}$ is equal to the Euclidean measure and the transverse measure on the side corresponding to $R_{i,k}$ is proportional to the Euclidean measure. 

\begin{figure}[h]
\leavevmode \SetLabels
\L(.26*.9) $\tilde{R}_{i,j}$\\
\L(.63*.9) $\tilde{R}_{i,k}$\\
\endSetLabels
\begin{center}
\AffixLabels{\centerline{\epsfig{file =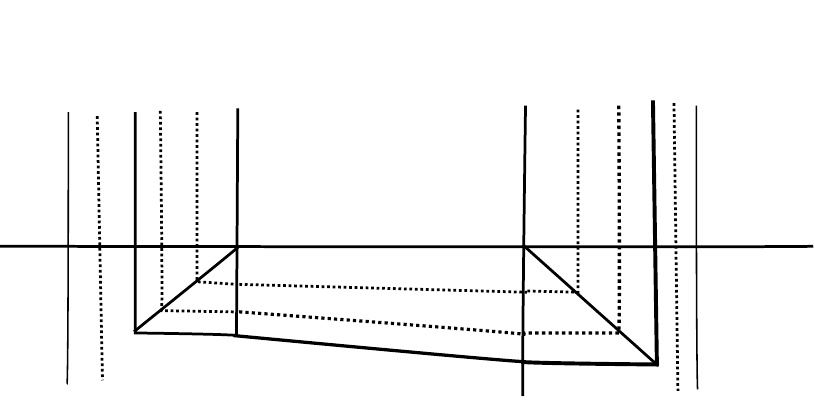,bb= 0 0 390 170,width=8.0cm,angle=0} }}
\vspace{-20pt}
\end{center}
\label{fig10}
\caption{The foliation in $Q_i$ that corresponds to the leaves of $|\mu |$ entering only the outer annulus.} 
\end{figure}

We next consider the leaves of $|\mu |$ that cross the cuff $\alpha_i$, which implies that the homotoped leaves are entering the inner annulus of $\alpha_i$ and exiting on the other side. We introduce a partial measured foliation of $C_i$ corresponding to this case. 
To simplify the construction, we can assume that each homotoped leaf corresponding to a geodesic of $|\mu |$ essentially intersects each geodesic arc in $C_i$ orthogonal to $\alpha_i$ at least twice. If this is not the case, we can apply the square of the Dehn twist around each $\alpha_i$ where this is not the case. Since the composition of these Dehn twists is a quasiconformal map, it follows that the partial measured foliation represnting the new (twisted) lamination is mapped to a partial measured foliation (by the inverse of the quasiconformal map) representing the original lamination with a bounded Dirichlet integral. 

Therefore, the total transverse measure across an arc orthogonal to $\alpha_i$ that connects the two boundaries of $C_i$ is at least three times the transverse measure of the leaves entering the inner annulus of $\alpha_i$ from one side (which is equal to the transverse measure of the leaves entering the annulus from the other side).

Let $\tilde{C}_i$ be a fundamental domain of the lift of $C_i$ to the upper half-plane $\mathbb{H}$ such that $\alpha_i$ is covered by the $y$-axis. Let $Q_i$ be the image of $\tilde{C}_i$ under $\log z$. The height of $Q_i$ is $2\theta$ and the length is $\ell_X(\alpha_i)$. 
When the vertical sides of $Q_i$ are identified by the Euclidean translation, we obtain the cylinder conformal to $C_i$
We divide each $Q_i$ into several subdomains and define the partial foliation in each subdomain such that it extends the partial foliation already defined on the rectangles and outer annuli, if any. 

\begin{figure}[h]
\leavevmode \SetLabels
\L(.12*.5) $2\theta$\\
\L(.255*.95) $b_1$\\
\L(.42*.95) $b_2$\\
\L(.56*.95) $b_3$\\
\L(.71*.95) $b_4$\\
\L(.28*.02) $c_1$\\
\L(.455*.02) $c_2$\\
\L(.645*.02) $c_3$\\
\endSetLabels
\begin{center}
\AffixLabels{\centerline{\epsfig{file =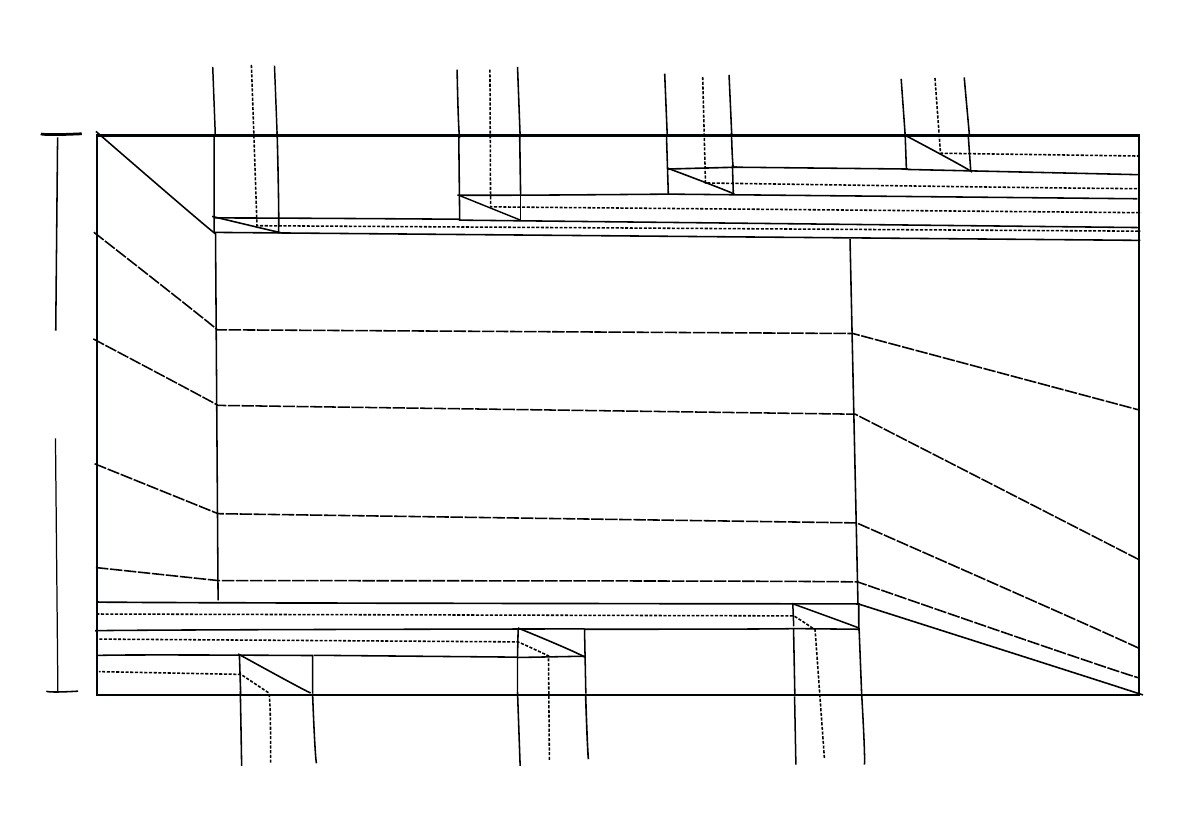,,bb= 0 0 540 400,width=10.0cm,angle=0} }}
\vspace{-20pt}
\end{center}
\label{fig11}
\caption{The partial measured foliation of $Q_i$.} 
\end{figure}


Let $a$ be the difference of the supremum of the intersection numbers of geodesic arcs orthogonal to $\alpha_i$ connecting the two boundaries of $C_i$ and the measured lamination $\mu$, and the sum of the weights of all the rectangles meeting $C_i$ from one side.  
We define a partial foliation of $Q_i$, which depends on the weights of the rectangles meeting $C_i$ (note that we did not use the weights for defining a partial foliation of $R_{i,j}$). Let $b_j$ be the weights of the rectangles $\tilde{R}_{i,j}$ incoming to the top of $Q_i$, and we assume that the twisting is such that the leaves in $\tilde{R}_{i,j}$ are incoming from the right (the other case is analogous). Then, the leaves in all the edges $R_{i,k}$ incoming to the bottom of $Q_i$ are from the right and denote their weights by $c_k$ (see Figure 11). Then $\sum_{l}b_l=\sum_{h}c_h$.

Let $b_j'=\frac{ 2\theta b_j}{a+\sum_lb_l}$ and $c_k'=\frac{ 2\theta c_k}{a+\sum_hc_h}$, where $2\theta$ is the length of the vertical  boundary sides of $Q$. 
It follows that $\sum_lb_l'$ and $\sum_hc_h'$ are each less than or equal to the length of the vertical side of $Q$. 

We extend each $\tilde{R}_{i,j}$ by starting from the leftmost by adding a rectangle whose top side is on $Q_i\cap\tilde{R}_{i,j}$ and the height is $b_j'$. The foliation of $\tilde{R}_{i,j}$ is extended by foliating the part above the main diagonal, as in Figure 6. We point out that this extension is different than in the previous case. Since the ratio $b_j'/b_j$ is bounded, the Dirichlet integral of the extension is also bounded by Lemma \ref{lem:L-fol}. We extend this rectangle by attaching a rectangle whose right vertical side is equal to the left vertical side of the above rectangle, and the left vertical side lies on the left vertical side of $Q_i$. The foliation by the horizontal leaves and Lemma \ref{lem:vert-rect-fol} the Dirichlet integral is bounded. We continue to extend all other rectangles $\tilde{R}_{i,j}$ from the left side and obtain foliations whose Dirichlet integrals are bounded (see Figure 11). Analogously, we extend the rectangles $\tilde{R}_{i,k}$ that meet $Q_i$ on its bottom side. The two partial foliations meet the left and the right vertical side of $Q_i$, and they need to be connected to form a partial foliation of the whole surface of $X$. We make two trapezoids and one rectangle to complete the foliation of $Q_i$. The sides of the trapezoids that are slanted have slopes $\sum b_j'/D'$, where $D'$ is the distance between the left vertical side of $Q_i$ and the leftmost $\tilde{R}_{i,j}$. The constant $D'$ is bounded by the half of the shortest distance between the rectangle incoming to the top side of $Q_i$ when the cut arc of $C_i$ defining $Q_i$ is appropriately chosen. Therefore, the foliation of two trapezoids chosen to have transverse measure equal to the Euclidean measure on the base sides belonging to the vertical sides of $Q_i$ have bounded Dirichlet integrals. The natural extension to the rectangle between them also has a bounded Dirichlet integral.

The leaves of partial foliations of the above rectangles and trapezoids glue together to make a proper partial unmeasured foliation of $X$, whose each leaf is homotopic to a geodesic of $|\mu |$. Conversely, every geodesic of $|\mu |$ is homotopic to a unique leaf of the proper unmeasured partial measured foliation. 
We multiply functions defining the partial measured foliation on various rectangles and trapezoids by appropriate positive numbers to make a measured proper partial foliation on $X$ such that the weights $w$ are achieved. For each $v_{i,j}:R_{i,j}\to\mathbb{R}$, we define $v_{i,j}^*=\frac{w(R_{i,j})}{2m^*}v_{i,j}$. The weight of $R_{i,j}$ for $v_{i,j}^*$ is $w(R_{i,j})$ and $D_{R_{i,j}}(v_{i,j}^*)\leq C\cdot [w(R_{i,j})]^2$, where $C>0$ is a universal constant. 

Assume that $R_{i,j}$ connects to a square $S$ inside $C_i$ corresponding to the geodesics of $|\mu |$ that leave $C_i$ on the same side of $C_i$. Then the function $v:S\to\mathbb{R}$ defines a partial measured foliation on the rectangle with leaves making an $L$-shape as in Figure 10 induces a transverse measure on the top and right vertical side equal to the Euclidean measure of total mass $2m^*a_j$. The Dirichlet integral $D_S(v)$ is the Euclidean area of $S$. We define $v^*=\frac{w(R_{i,j})}{2m^*}v$ and note that the transverse measure on the top side agrees with the transverse measure from $v_{i,j}^*$ on the portion where they meet. In addition, $D_S(v^*)\leq C [w(R_{i,j})]^2$ for a universal constant $C>0$.

We next adjust the transverse measure induced by the function $v_{j,k}:T_{j,k}\to\mathbb{R}$ on the trapezoid $T_{j,k}$ that connects two squares attached to $R_{i,j}$ and $R_{i,k}$ (see Figure 10). We can assume that the transverse measure to the foliation of $T_{j,k}$ is equal to the Euclidean measure of the right vertical side of the square attached to $R_{i,j}$ (the other case is symmetric). We define $v_{j,k}^*=\frac{w(R_{i,j})}{2m^*}v_{j,k}$. The partial foliation of the trapezoid $T_{j,k}$ agrees on both of its vertical sides with the partial foliation induced on the attached squares and $D_{T_{j,k}}(v_{j,k}^*)\leq C [w(R_{i,j})]^2$ for a universal constant $C>0$.

It remains to consider the foliation of the part of $Q_i$ that represents the part of $|\mu |$ that crosses $\alpha_i$. For each $R_{i,j}$, there is (a possibly empty) rectangle entering $Q_i$ that is foliated by vertical lines, followed by a rectangle where either the part above or the part below the diagonal is foliated by lines parallel to the diagonal, and then followed by another rectangle that ends on a vertical side of $Q_i$. By Lemmas \ref{lem:vert-rect-fol}, \ref{lem:L-fol} and \ref{lem:trapezoid-foliation}, the total Dirichlet integral over these pieces is uniformly bounded. The transverse measure of the partial foliation of the rectangle with the foliation by vertical lines is the Euclidean measure on the horizontal lines whose mass is $2m^*$. We multiply the function defining the foliation by $\frac{a+\sum_lb_l}{ 2\theta}$ so that the new measure agrees with the measure from $R_{i,j}$ and the Dirichlet integral is bounded above by $C(a+\sum_lb_l)^2\leq C'[a^2+\sum_lb_l^2]$, where the last inequality follows by the Cauchy-Schwarz inequality. The other parts are treated similarly, and we obtain the same inequality for the Dirichlet integrals. The two trapezoids and the rectangle that separate the parts where $R_{i,j}$ and $R_{i,k}$ enter $Q_i$ are easily estimated using the same lemmas and multiplying by the appropriate constants so that the transverse measures of the pieces agree on their common sides. The proper partial foliation has the desired weights on the rectangles and annuli, representing the geodesic lamination $\mu$.  

In conclusion, the Dirichlet integral of the measured foliation is bounded above by a constant multiple of the sum of the square of the weights on the rectangles $R_{i,j}$ connecting the cuffs and the sum of $[a^2+\sum_lb_l^2]$ for each cuff. Since $a$ is less than $i(\beta_n,\mu )$, we conclude that the Dirichlet integral is less than a constant multiple of the sum of the squares of the intersection numbers. 
\end{proof}

By the above Theorem and Proposition \ref{prop:int-weights} we obtain
\begin{cor}
\label{cor:holq-ml-bdd}
Let $X$ be an infinite Riemann surface with a bounded geodesic pants decomposition and $\mu\in {ML}(X)$. Let $\Theta$ be the standard train track that carries $\mu$. If the induced weights $w_{\mu}=w:E(\Theta )\to\mathbb{R}$ satisfy 
$$
\sum_{e\in E(\Theta )}[w(e)]^2<\infty
$$
then $\mu\in {ML}_{int}(X)$. 
\end{cor}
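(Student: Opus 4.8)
The plan is to deduce Corollary~\ref{cor:holq-ml-bdd} directly by combining the two results already in hand. Since $X$ has a bounded geodesic pants decomposition, it has bounded geometry with no planar parts, so the family $\{\beta_s\}_s$ is empty and the $\ell^2$-summability condition of Theorem~\ref{thm:holq-ml-bdd} reduces to $\sum_n[i(\alpha_n,\mu)^2+i(\beta_n,\mu)^2]<\infty$. By Proposition~\ref{prop:int-weights}, the hypothesis $\sum_{e\in E(\Theta)}[w(e)]^2<\infty$ is equivalent to exactly this sum being finite. Therefore $\mu$ satisfies the hypothesis of Theorem~\ref{thm:holq-ml-bdd}, and we conclude $\mu\in ML_{int}(X)$.

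Concretely, I would proceed in three short steps. First, observe that the absence of planar parts forces $\{\beta_s\}_s=\emptyset$, so the double-sum condition of the theorem collapses to $\sum_n[i(\alpha_n,\mu)^2+i(\beta_n,\mu)^2]<\infty$. Second, apply the equivalence in Proposition~\ref{prop:int-weights}: the weight function $w=w_\mu$ on the Dehn--Thurston train track $\Theta$ (which weakly carries $\mu$, as constructed in \S\ref{sec:tt}) satisfies $\sum_{e\in E(\Theta)}w(e)^2<\infty$ if and only if $\sum_n[i(\alpha_n,\mu)^2+i(\beta_n,\mu)^2]<\infty$. Third, feed this into Theorem~\ref{thm:holq-ml-bdd} to get $\mu\in ML_{int}(X)$, and then invoke Theorem~\ref{thm:main} (the version for $\Gamma$ of the first kind, noting that bounded geometry forces $\Gamma$ of the first kind) to produce the finite-area holomorphic quadratic differential $\varphi$ with $\mu_\varphi=\mu$.

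There is essentially no obstacle here, since the corollary is a formal consequence of two already-proved statements; the only point requiring care is making sure the hypotheses line up. In particular one should check that the train track $\Theta$ appearing in the corollary is the same one used in Proposition~\ref{prop:int-weights} — both are the standard Dehn--Thurston train track adapted to the cuffs $\{\alpha_n\}_n$ with finitely many edges added per pair of pants, so this matches — and that ``$\mu\in ML(X)$'' together with the weight condition is precisely the input Proposition~\ref{prop:int-weights} requires. One might also remark that the converse holds as well: by Proposition~\ref{prop:int-numbers} combined with Proposition~\ref{prop:int-weights}, if $\mu=\mu_\varphi$ for some $\varphi\in A(X)$ then the weights are square-summable, so in fact $ML_{int}(X)$ corresponds exactly to the square-summable weight functions on $\Theta$, which is the content of Theorem~\ref{thm:weights} in the introduction.
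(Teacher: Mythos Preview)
Your proposal is correct and matches the paper's own proof, which simply says ``By the above Theorem and Proposition~\ref{prop:int-weights} we obtain'' the corollary. Your additional remarks (that the bounded pants decomposition makes $\{\beta_s\}_s$ empty, and the converse via Proposition~\ref{prop:int-numbers}) are accurate elaborations, though note that the corollary as stated only asserts $\mu\in ML_{int}(X)$, so your third step invoking Theorem~\ref{thm:main} to produce $\varphi$ is not strictly needed.
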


\section{A class of harmonic functions on Riemann surfaces}
\label{sec:harmonic-classes}

In this section, $X$ is an arbitrary Riemann surface. 
Recall that $X\in O_G$ if does not support Green's function. A Green's function is a real-valued function $u:X\to\mathbb{R}$ that is harmonic in $X\setminus\{ z_0 \}$, $u(z)\asymp -\log |z-z_0|$ near $z_0$ and $u(z)\to 0$ as $z\to\partial_{\infty}X$, where $\partial_{\infty}X$ is the ideal boundary of $X$. If $X$ supports Green's function with the singularity at some $z_0$, then $X$ supports Green's function with the singularity at any point of $X$.

In addition to Green's function, a Riemann surface $X$ can support a harmonic function $u:X\to\mathbb{R}$ with additional properties (see Ahlfors-Sario \cite{AhlforsSario}). The class of bounded harmonic function $u:X\to\mathbb{R}$ is denoted by $HB$. The class of surfaces that do not support a non-constant bounded harmonic function are denoted by $O_{HB}$ (see \cite{AhlforsSario}). Kaimanovich \cite{Kaim} proved that $X\in O_{HB}$ if and only if the horocyclic flow on $X$ is ergodic. Another interesting class of maps are harmonic maps $u:X\to\mathbb{R}$ that have finite Dirichlet integrals $D_X(u)<\infty$, denoted by $HD$. The class of Riemann surfaces without non-constant harmonic function with finite Dirichlet integrals is denoted by $O_{HD}$. The class of bounded harmonic functions with finite Dirichlet integrals is denoted by $HBD$, and the class of Riemann surfaces that does not have non-constant $HBD$-functions is denoted by $O_{HBD}$. It turns out that if $X$ supports a non-constant $HD$-function then it also supports a non-constant $HBD$-function (see \cite[page 203, Theorem 5C.]{AhlforsSario}). We have the following inclusions
$$
O_G\subset O_{HB}\subset O_{HD}=O_{HBD}.
$$
The above inclusions are equalities when $X$ is a planar surface. When $X$ is not planar, Lyons-Sullivan \cite{LyonsSullivan}, McKean-Sullivan \cite{McKS}, Ahlfors \cite{AhlforsSario}, \cite{Ahlfors1} and T\^oki \cite{Toki} showed that the proper inclusions hold by producing examples of the Riemann surfaces. The examples given are infinite Loch-Ness monsters.

The class $O_G$ is invariant under quasiconformal maps \cite{AhlforsSario}. Lyons \cite{Lyons} proved that $O_{HB}$ is not invariant under quasiconformal maps by giving an example of two infinite Riemann surfaces with a Cantor set of ends that are quasiconformal with one in $O_{HB}$ and the other not in $O_{HB}$. 
Sario-Nakai \cite[\S 3]{SarioNakai} proved that the class $O_{HD}$ is invariant under quasiconformal mappings as an application of the (deep) properties of Royden's compactification of Riemann surfaces. 
We prove this fact using a more geometric approach by studying horizontal foliations of quadratic differentials. 

\begin{thm}
\label{thm:O_HD)}
The class $O_{HD}=O_{HBD}$ of Riemann surfaces is invariant under quasiconformal maps.
\end{thm}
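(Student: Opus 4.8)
The plan is to reduce the quasiconformal invariance of $O_{HD}$ to the realization theorem (Theorem \ref{thm:realization-arbitrary}) together with the correspondence between $HD$-functions and Abelian differentials with purely imaginary periods whose squares are finite-area holomorphic quadratic differentials. First I would recall the standard dictionary: a non-constant harmonic function $u$ on $X$ with $D_X(u)<\infty$ gives a holomorphic Abelian differential $\omega = \partial u = (u_x - i u_y)\,dz$ with purely imaginary periods (purely imaginary because $u$ is single-valued, so $\oint_\gamma \omega = i\oint_\gamma *du$), and $\int_X |\omega|^2 = D_X(u) < \infty$. Conversely, an Abelian differential $\omega$ with purely imaginary periods and $\int_X|\omega|^2<\infty$ integrates to a (possibly multivalued but with single-valued real part) function whose real part $u$ is harmonic with $D_X(u) = \int_X |\omega|^2 < \infty$. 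Hence $X \notin O_{HD}$ if and only if $X$ carries a non-constant holomorphic Abelian differential $\omega$ with purely imaginary periods and finite $L^2$-norm. Setting $\varphi = \omega^2$, this is a finite-area holomorphic quadratic differential; its horizontal foliation (the level sets of $u=\mathrm{Re}\int\omega$) has the special feature that \emph{every} non-singular leaf is a closed curve or a cross-cut along which $u$ is constant, and the transverse measure is exactly $|du| = |{*dv}|$ locally. The point of passing to foliations is that the foliation, and in particular its straightened measured lamination $\mu_\varphi \in ML_{int}(X)$, is a \emph{topological} object together with a transverse measure, while the property ``$u$ is single-valued'' translates into a topological condition on $\mu_\varphi$: the leaves close up, i.e. the measured lamination has no holonomy — equivalently, integrating the transverse measure around any loop in $X$ returns zero.

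Next I would make that last condition precise and observe it is a quasiconformal invariant once we know $ML_{int}(X)$ is. Concretely: given a quasiconformal map $f\colon X\to X'$, one has $f_*\mu_\varphi \in ML_{int}(X')$ by Theorem \ref{thm:realization-arbitrary} combined with the fact that a $K$-quasiconformal map distorts the Dirichlet integral of a partial measured foliation by at most a factor $K$ (this is exactly the estimate used repeatedly in \S\ref{sec:sufficient-cond}, e.g. in the proof of Theorem \ref{thm:holq-ml-bdd}: precomposing the defining functions $v_i$ with a $K$-quasiconformal map multiplies $D(\mathcal{F})$ by at most $K$), so a partial measured foliation with finite Dirichlet integral representing $\mu_\varphi$ is pushed forward to one representing $f_*\mu_\varphi$ with still-finite Dirichlet integral. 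Then by Theorem \ref{thm:realization-arbitrary} there is a unique $\varphi' \in A(X')$ with $\mu_{\varphi'} = f_*\mu_\varphi$. The remaining — and this is the crux — is to check that $\varphi'$ is again the square of an Abelian differential with purely imaginary periods, i.e. that the ``single-valued harmonic function'' structure survives. The key observation is that whether $\sqrt{\varphi}$ is a global (single-valued up to sign is automatic; we need it to be a genuine $\pm$ but we actually want $*du$ to be exact) Abelian differential with purely imaginary periods can be read off from the measured lamination $\mu_\varphi$: it holds exactly when, for every loop $\gamma$ on $X$, the total signed transverse measure of $\gamma$ against the \emph{co-oriented} leaves is zero. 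Since $f$ is a homeomorphism it carries loops to loops and preserves intersection/transverse-measure pairings up to the push-forward, so if $\mu_\varphi$ has trivial period homomorphism $H_1(X;\mathbb{Z}) \to \mathbb{R}$ then so does $f_*\mu_\varphi = \mu_{\varphi'}$, whence $\varphi'$ comes from a harmonic function $u'$ with $D_{X'}(u') = \|\varphi'\|_{L^1} < \infty$. One must also check $u'$ is non-constant, which is immediate: $\varphi' \ne 0$ because $\mu_{\varphi'}$ is non-trivial (it is the push-forward of a non-trivial lamination). This shows $X\notin O_{HD}\Rightarrow X'\notin O_{HD}$, and by symmetry (applying the argument to $f^{-1}$) we get the equivalence, proving $O_{HD}$ is a quasiconformal invariant. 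The identity $O_{HD}=O_{HBD}$ is quoted from Ahlfors--Sario and requires no extra work.

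The main obstacle I anticipate is the bookkeeping around \emph{co-orientations and periods}: a measured lamination $\mu_\varphi$ records only the unsigned transverse measure, so one has to argue carefully that the extra data needed to reconstruct $u$ — a consistent choice of ``which side $u$ increases'' along the leaves, equivalently a transverse co-orientation, together with the vanishing of its period class — is itself topological and hence transported by $f$. One clean way to handle this is to work not with $\mu_\varphi$ alone but with the pair $(\mathcal{F}_\varphi, \text{co-orientation})$, i.e. the partial \emph{oriented} measured foliation, for which the ``$du$ exact'' condition is literally the statement that the closed $1$-form $dv$ (the transverse measure, now signed) is exact on $X$; exactness of a closed form is a homeomorphism-invariant notion. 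A quasiconformal $f$ pulls back the oriented partial measured foliation $\mathcal{F}_{\varphi'}$ to one representing $\mathcal{F}_\varphi$ up to homotopy, and since $f^*$ is a chain-homotopy equivalence on de Rham cohomology (being a homeomorphism), exactness is preserved. A secondary, more technical point is that Theorem \ref{thm:realization-arbitrary} is stated for measured geodesic laminations, so one must confirm that the oriented refinement of the correspondence $\varphi \leftrightarrow \mu_\varphi$ is still bijective onto oriented laminations with vanishing period class; this should follow from the injectivity part (\cite{MardenStrebel}, \cite{Saric-heights}) by noting that the orientation and transverse-measure data together with $\mu_\varphi$ determine $\varphi$, and that an $HD$-function is recovered from $\varphi$ uniquely up to an additive constant precisely when the period class vanishes.
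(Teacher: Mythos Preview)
Your strategy is essentially the paper's: start from a non-constant $HD$-function $u$ on $X$, form $\varphi=\omega^2$ with $\omega=\partial u$, push the (co-oriented) vertical foliation forward by $f$, realize it on the target by Theorem~\ref{thm:realization-arbitrary} as some $\psi\in A(Y)$, take a global square root $\phi_Y$ using the transported orientation, and then verify that $\phi_Y$ has purely imaginary periods so that $\mathrm{Re}\int\phi_Y$ is a well-defined $HD$-function on $Y$.

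The one place where your write-up diverges from the paper, and where it needs tightening, is the verification that $\mathrm{Re}\oint_\alpha d\phi_Y=0$ for every loop $\alpha$ on $Y$. You propose to deduce this from ``exactness of the transverse $1$-form is a homeomorphism invariant via de Rham cohomology,'' but that shortcut does not work as stated: $f$ is only quasiconformal, so $f_*(du)$ is not a smooth $1$-form, and in any case the $1$-form you must analyze is $\mathrm{Re}(d\phi_Y)$, which comes from the \emph{realized} differential $\psi$ and is only measure-equivalent (not equal) to the pushed-forward foliation. What is needed is precisely what the paper supplies: an explicit computation that the real period $\mathrm{Re}\oint_\alpha d\phi_Y$ equals a \emph{signed algebraic intersection number} of $\alpha$ with the oriented vertical foliation of $\psi$. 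The paper does this by replacing $\alpha$ with a step curve (alternating horizontal and vertical arcs for $\phi_Y$), observing that vertical arcs contribute zero and horizontal arcs contribute $\pm$ their transverse measure, and then cancelling the contributions from strips that do not separate the endpoints of a lift $\tilde\alpha$. This reduces the period to a quantity depending only on the measure-equivalence class of the co-oriented foliation, which is transported by $f$ from the corresponding (zero) period on $X$. Your acknowledged ``main obstacle'' is exactly this point, and your instinct that it is a topological intersection-number statement is correct; you just need to replace the de Rham language with the step-curve/signed-intersection argument to make it rigorous.
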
 

\begin{proof}
Let $X\notin O_{HBD}$ and $f:X\to Y$ be a quasiconformal map. Let $u:X\to\mathbb{R}$ be a non-constant $HBD$-function. Let $\phi =d(u+{\bf i}u^*)$ be an Abelian differential on $X$, where $u^*$ is a local harmonic conjugate of $u$. Since $u$ is a globally defined function, it follows that all the periods of $\phi$ are purely imaginary. The Dirichlet integral $D_X(u)$ is finite because $u$ is an $HBD$-function. Therefore
$$
\int_X |\phi ^2|=D_X(u)<\infty 
$$
and $\phi^2\in A(X)$. The vertical leaves of $\phi^2$ are given by $u(z)=c$ for $c\in\mathbb{R}$; the transverse measure to the vertical foliation is given by $\int_{*} |du|$. Denote by $\mathcal{V}_{\phi^2}$ the vertical foliation of $\phi^2$.

By Theorem \ref{thm:realization-arbitrary}, there exists a finite-area holomorphic quadratic differential $\psi$ on $Y$ such that its vertical measured foliation $\mathcal{V}_{\psi}$ is equivalent to the push-forward measured foliation $f^{*}(\mathcal{V}_{\phi^2})$ of the foliation $\mathcal{V}_{\phi^2}$ by $f$. The leaves of the measured foliation $f^{*}(\mathcal{V}_{\phi^2})$ are globally oriented, which induces a global orientation of the vertical leaves of $\psi$. It follows that the square root of $\psi$ is globally defined Abelian differential $\phi_Y$ on the Riemann surface $Y$. Indeed, if $w$ is a natural parameter for $\psi$ on $Y$, then we have two choices $\pm w$ for the natural parameter. We choose the sign in front of $w$ so that the imaginary part of the choice made increases as we traverse the vertical arc in the positive direction of the orientation. This makes a unique choice of the natural parameter for every point of $Y$, and the transition maps satisfy $w_2=w_1+const$. The differential $dw$ in these local charts defines an Abelian differential $\phi_Y$ on $Y$, which is the square root of $\psi$ in the $w$-coordinate.

Let $\alpha$ be a homotopically non-trivial closed differentiable curve on $Y$ and fix $\epsilon >0$. We replace $\alpha$ by a homotopic step curve for $\phi_Y^2$, denoted by $\alpha$ again, that consists of horizontal and vertical arcs such that 
$$
h_{\phi^2_Y}(\alpha )\geq \int_{\alpha}|Re(d\phi_Y )|-\epsilon .
$$

We need to compute $\int_{\alpha}Re(d\phi_Y)$ in order to show that $\phi_Y$ induces an $HD$-function on $Y$.  
The steps of $\alpha$ are horizontal arcs $h_j$ and vertical arcs $v_k$  for $\phi^2_Y$. Then 
 $$
 \int_{v_k}Re(d\phi_Y) =0
 $$
 and 
 $$
 \int_{h_j}Re(d\phi_Y) =\pm \int_{h_j}|Re(d\phi_Y)|,
 $$
 where the sign is $"+"$ if $Re(d\phi_Y)>0$ along $h_j$ for the orientation induced by the orientation of $\alpha$, and the sign is $"-"$ if $Re(d\phi_Y)<0$ along $h_j$. We have
$$
Re\int_{\alpha}d\phi_Y =\sum_j(\pm \int_{h_j}|Re(d\phi_Y)|).
$$
 
Let $\tilde{\phi}_Y$ be the lift of $\phi_Y$ to the universal covering of $Y$. Let $\tilde{\alpha}$ be a connected closed arc in the universal covering of $Y$ that maps injectively to $\alpha$ when its two endpoints $a_1$ and $a_2$ are identified. Let $\tilde{h}_j$ be the lifts of $h_j$ on $\tilde{\alpha}$, and $\tilde{v}_k$ be the lifts of $v_k$ on $\tilde{\alpha}$. We consider the strips of vertical trajectories between the vertical trajectories $v_{a_1}$ and $v_{a_2}$ through $a_1$ and $a_2$. If $\tilde{v}_k$ is a vertical arc such that it meets two horizontal arcs $\tilde{h}_{j_1}$ and $\tilde{h}_{j_2}$ on the same side, then there is a strip $S_r$ that contains $\tilde{v}_k$ and does not separate $v_{a_1}$ and $v_{a_2}$. Then
$$
\int_{S_r\cap \tilde{h}_{j_1}}Re(d\tilde{\phi}_Y)+\int_{S_r\cap \tilde{h}_{j_2}}Re(d\tilde{\phi}_Y)=0
$$ 
as $S_r\cap \tilde{h}_{j_1}$ and $S_r\cap \tilde{h}_{j_1}$ are homotopic modulo the vertical trajectories of $\tilde{\phi}^2_Y$ and have opposite orientations (see \cite[Figure 1]{Saric-heights}). 
We modify $\tilde{h}_k$ to $\tilde{h}_k':=\tilde{h}_k\setminus\cup_rS_r$. 
The above discussion gives that the integral
$$
\int_{\tilde{\alpha}}Re (d\tilde{\phi}_Y)
$$
is equal to the sum of the integral of $Re (d\tilde{\phi}_Y)$ over all horizontal subarcs $\tilde{h}_k'$ of $\tilde{\alpha}$. 

Let $\tilde{f}:\tilde{X}\to\tilde{Y}$ be the lift of $f:X\to Y$. 
The vertical leaves that intersect $\tilde{h}'_k$ are mapped by $\tilde{f}^{-1}$ to vertical leaves that separate $\tilde{f}^{-1}(v_{a_1})$ and $\tilde{f}^{-1}(v_{a_2})$. Moreover, the set of vertical trajectories that separate $v_{a_1}$ and $v_{a_2}$ is in a bijection with the set of the vertical trajectories that separate $\tilde{f}^{-1}(v_{a_1})$ and $\tilde{f}^{-1}(v_{a_2})$. Let $\tilde{\beta}$ be a step curve for $\phi$ connecting $\tilde{f}^{-1}(v_{a_1})$ and $\tilde{f}^{-1}(v_{a_2})$.

Since $\mathcal{V}_{\phi^2}$ and $\mathcal{V}_{\phi^2_Y}$ correspond under $\tilde{f}$, the transverse measures of any two sets of corresponding vertical trajectories are equal. 
 The integrals over horizontal trajectories are positive if the direction of the horizontal subarc of $\tilde{\beta}$ and the directions of the vertical trajectories subtend an angle $\pi /2$ in the positive direction for the orientation of the universal covering; otherwise, the integral is negative. This property is invariant under $\tilde{f}$ and we conclude that 
$$
\int_{\tilde{\alpha}}Re(d\phi_Y )=\int_{\tilde{\beta}}Re(d\phi )=0.
$$
Therefore, the real part of $d\phi_Y$ gives a globally defined harmonic function $u_Y:Y\to\mathbb{R}$, which is of class $HD$. Thus $Y\notin O_{HD}=O_{HBD}$. The invariance is established.
\end{proof}

\begin{rem}
The above theorem implies that if $X$ supports a non-constant $HB$-function and a quasiconformally equivalent Riemann surface $Y$ does not, then the $HB$-function has an infinite Dirichlet integral.
\end{rem}

\section{The type problem for Riemann surfaces with bounded geometry}
\label{sec:type-graph}

In this section, $X$ stands for an infinite Riemann surface with bounded geometry. By a result of Kinjo \cite{Kinjo}, there exists $M>0$ and a decomposition of the surface $X$ into pairs of pants $\{ P_j\}_j$ and hexagons $\{ H_k\}_k$ such that the cuff lengths and side lengths of the hexagons are between $1/M$ and $M$. We form a graph $\mathcal{G}$ by taking its vertices to be the pairs of pants and hexagons in the fixed decomposition of $X$ as above. Two vertices of $\mathcal{G}$ are connected by an edge if they share a part of their boundaries. Therefore, $\mathcal{G}$ is an infinite graph with bounded valence. 

Indeed, a vertex corresponding to a pair of pants $P_j$ has a degree at most three times the maximum of the attached pieces to each cuff (from the other side). If the attached piece is another pair of pants, the number of attached pieces to a cuff is one. Otherwise, the bounded geometry gives the number of attached hexagons to a cuff at most $M^2$. For a vertex corresponding to a hexagon $H_k$, each side of the hexagon that connects two cuffs gives one edge at the vertex. A side $a_k$ of a hexagon on a cuff $\alpha$ can give one edge if there is a pair of pants on the other side of the cuff $\alpha$ or if only one hexagon on the other side of $\alpha$ contains $a_k$ in its boundary. Otherwise, $a_k$ can be contained in an at most $M^2$ boundary sides of the hexagons on the other side of $\alpha$, and this corresponds to an at most $M^2$ edges at the vertex $H_k$ corresponding to the side $a_k$. Therefore, at most, $3M^2+3$ edges are at the vertex $H_k$.
A vertex of $\mathcal{G}$ can have an edge connecting it to itself (i.e., a loop), which happens when a cuff is on the boundary of a single pair of pants. 
 Denote by $V=V(\mathcal{G})$ and $E=E(\mathcal{G})$ the sets of vertices and edges of $\mathcal{G}$, respectively.

We consider the simple random walk on $\mathcal{G}$ with the probability of moving from one vertex $x$ to its neighbor $y$ (i.e., vertices $x$ and $y$ are connected by an edge) is given by $1/deg(x)$, where $deg(x)$ of the degree of the vertex $x$. For any two vertices $x$ and $y$ of $\mathcal{G}$, let $p(x,y)$ denote the probability of the random walk moving from $x$ to $y$ in one step. Then $p(x,y)=1/deg(x)$ if $x$ and $y$ are connected by an edge, and $p(x,y)=0$ otherwise. Note that $p(x,y)$ may not equal $p(y,x)$ since the degrees of $x$ and $y$ may differ. For $n>0$, denote by $p^{(n)}(x,y)$ the probability that the random walk will move from $x$ to $y$ in $n$ steps. The Green's function $G(x,y)$ of the random walk is defined by (for example, see \cite{Woess})
$$
G(x,y)=\sum_{n>0} p^{(n)}(x,y).
$$
By the definition, the random walk on the graph $\mathcal{G}$ is {\it recurrent} if $G(x,y)=\infty$ for some pair of vertices $x,y$
(which is equivalent to all pairs of vertices). The random walk is {\it transient} if $G(x,y)<\infty$.

We choose an arbitrary orientation of each edge $e\in E$. 
Denote by $e^-$ and $e^+$ the initial and the end point of $e$. In the language of Reversible Markov Chains as in \cite[I.2.A]{Woess}, we have the total conductance $m(x)=deg(x)$, the conductance between $x$ and $y$ to be $a(x,y)=1$ and the resistance $r(e)=1$ along $e$ when $x=e^-$ and $y=e^+$.
Let $\ell^2(V,m)$ and $\ell^2(E)$ be the spaces of $\ell^2$ functions on the sets of vertices and edges of $\mathcal{G}$, where the first space is weighted by the function $m(x)=deg(x)$ and the second space is unweighted. Note that  $\ell^2(V,m)=\ell^2(V)$ because $m(x)>0$ is bounded above, which implies that the norms are quasi-isometric.

Define
$$
\nabla :\ell^2(V,m)\to\ell^2(E)
$$
by
$$
\nabla (f)(e)=f(e^+)-f(e^-).
$$
The adjoint map
$$\nabla^*:\ell^2(E)\to\ell^2(V,m)$$ is given by
$$
\nabla^*(g)(x)=\frac{1}{m(x)}\Big{[}\sum_{e:e^+=x}g(e)-\sum_{e:e^-=x}g(e)\Big{]}. 
$$

By \cite[page 19, Theorem (2.12)]{Woess}, the random walk on $\mathcal{G}$ is transient if and only if there exists a function $u\in \ell^2(E)$ not identically equal to zero such that
\begin{equation}
\label{eq:transient_walk}
\nabla^*u(y)=-\frac{i_0}{m(x)}\delta_x(y),
\end{equation}
where $\delta_x(\cdot )$ is the Dirac measure with support $\{ x\}$ and $i_0$ is a non-zero constant. 

We give a complete characterization when $X\in O_G$ for $X$ with a bounded geometry. The proof could potentially be deduced from the result of Kanai \cite{kanai} by carefully analyzing the relationship between the graph $\mathcal{G}$ and nets. We give an independent and a direct proof in the Appendix.

\begin{thm}
\label{thm:bounded-pants-graph-O_G}
Let $X$ be an infinite Riemann surface with a bounded geometry. Then $X\in O_G$ if and only if the simple random walk on the graph $\mathcal{G}$ is recurrent.
\end{thm}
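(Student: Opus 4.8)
The plan is to transfer the type problem for $X$ to the type problem for the graph $\mathcal{G}$ by realizing $\ell^2$ functions on the edges of $\mathcal{G}$ as (weights of) measured laminations on $X$, and then applying Theorem~\ref{thm:main} together with the non-parabolicity criterion of Theorem~\ref{thm:non-parabolic-criteria}. First I would note that $\Gamma$ is of the first kind (bounded geometry forces this, as observed in \S\ref{sec:sufficient-cond}), so both theorems apply. The key dictionary is: a unit flow on $\mathcal{G}$ from a vertex $x_0$ to infinity with finite energy corresponds, via the edges of $\mathcal{G}$ dual to the cuffs $\{\alpha_n\}$ and hexagon sides, to an assignment of nonnegative real weights to a family of simple closed geodesics, hence (after the switch/train-track bookkeeping of Proposition~\ref{prop:int-weights} and Proposition~\ref{prop:weights-int-num}) to a measured lamination $\mu$ on $X$ whose intersection numbers with $\{\alpha_n\}\cup\{\beta_n\}\cup\{\beta_s\}$ satisfy $\sum_n[i(\alpha_n,\mu)^2+i(\beta_n,\mu)^2]+\sum_s i(\beta_s,\mu)^2<\infty$. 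By Theorem~\ref{thm:main} such a $\mu$ lies in $ML_{int}(X)$.

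\textbf{Key steps, in order.} (1) Set up the correspondence between edges of $\mathcal{G}$ and the transverse families of geodesics: each edge of $\mathcal{G}$ records the gluing of two pieces along (a portion of) a cuff, so a flow value on that edge measures how much of the lamination crosses from one piece to the adjacent one. (2) Show that a finite-energy flow on $\mathcal{G}$ with source $x_0$ produces weights satisfying the $\ell^2$ condition of Theorem~\ref{thm:weights}/Proposition~\ref{prop:int-numbers}, using the bounded valence of $\mathcal{G}$ and the uniform bounds on cuff and hexagon side lengths to convert between ``energy of the flow'' and ``sum of squares of intersection numbers'' up to multiplicative constants. (3) Conversely, show that a measured lamination $\mu\in ML_{int}(X)$ with positive mass of leaves escaping every compact set restricts to a flow on $\mathcal{G}$ (reading off, on each edge, the transverse measure of leaves crossing the corresponding cuff), and that the escaping-mass condition forces this flow to be a genuine nonzero flow to infinity rather than a flow supported on a compact subgraph; the switch conditions of the train track become the flow-conservation (Kirchhoff) conditions $\nabla^* u(y)=0$ at interior vertices. (4) Combine: $X\notin O_G$ $\iff$ (Theorem~\ref{thm:non-parabolic-criteria}) there is $\mu\in ML_{int}(X)$ with positive escaping mass $\iff$ there is a nonzero finite-energy flow to infinity on $\mathcal{G}$ $\iff$ (the criterion \eqref{eq:transient_walk} from \cite[Thm~(2.12)]{Woess}) the simple random walk on $\mathcal{G}$ is transient. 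Taking contrapositives gives $X\in O_G$ $\iff$ recurrence.

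\textbf{Main obstacle.} The delicate point is step (3): matching the measure-theoretic escaping condition of Theorem~\ref{thm:non-parabolic-criteria}---that the $\mu$-measure of geodesics leaving every compact set is \emph{positive}---with the graph-theoretic statement that the associated flow has nonzero net flux out to infinity. A lamination could a priori have infinitely many leaves, each escaping, yet contribute a flow that ``cancels'' or whose flux through every finite cut is zero; one must argue, using that $X$ has bounded geometry so that compact subsurfaces correspond to finite subgraphs and the transverse measure through a separating multicurve equals the flux through the dual cut, that positive escaping mass is equivalent to a uniformly positive lower bound on the flux across an exhausting sequence of cuts, which is exactly nonzero flow to infinity. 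A secondary technical nuisance is handling the hexagon pieces and the auxiliary geodesics $\{\beta_s\}$ on the same footing as the pants pieces and $\{\beta_n\}$, but this is routine given Propositions~\ref{prop:weights-int-num} and~\ref{prop:int-numbers}. I would also remark that the quasi-isometry $\ell^2(V,m)=\ell^2(V)$ (bounded valence) is used freely so that the choice of weights $m(x)=\deg(x)$ versus counting measure is immaterial.
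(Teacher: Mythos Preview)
Your overall architecture matches the paper's: both directions pass through the flow criterion \eqref{eq:transient_walk} for transience and through the $\ell^2$ characterization of $ML_{int}(X)$ (Theorem~\ref{thm:holq-ml-bdd}/Proposition~\ref{prop:int-numbers}). Your step (2) is essentially what the paper does, though you should note that turning a signed $u\in\ell^2(E)$ into an actual measured lamination requires a concrete choice of which rectangles to place in each pair of pants/hexagon; the paper does this by reorienting edges so $u\geq 0$, grouping the edges at each vertex into ``incoming'' and ``outgoing'' families, and connecting them in pairs. You also underplay the argument that the resulting $\mu$ has positive escaping mass: the paper proves this by applying the Poincar\'e recurrence theorem on an exhaustion by finite bordered subsurfaces, using that the net flux out of $P_x$ is $1$ while the flow is divergence-free elsewhere, so a set of leaves of transverse measure at least $1$ must exit every finite stage.

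The real gap is in your step (3). A measured lamination produced by Theorem~\ref{thm:non-parabolic-criteria} carries no orientation on its leaves, and without one you cannot manufacture a function $u$ on $E(\mathcal{G})$ satisfying $\nabla^*u(y)=0$: the quantity ``transverse measure of leaves crossing the cuff'' is nonnegative and obeys triangle-type inequalities inside a pair of pants, not a conservation law. Your claim that ``the switch conditions of the train track become the flow-conservation (Kirchhoff) conditions'' is false as stated---switch relations live on the train track inside each piece, not on the dual graph $\mathcal{G}$, and summing unsigned intersection numbers over the three cuffs of a pair of pants does not give zero. The paper avoids this entirely: rather than starting from an arbitrary $\mu\in ML_{int}(X)$ with escaping mass, it builds a specific oriented foliation from a harmonic function $g:X\setminus P_x\to (0,1)$ with $g|_{\partial P_x}=0$ (a construction from \cite[Theorem 4.2]{Saric23}). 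The leaves $g^*=\mathrm{const}$ are then globally oriented by increasing $g$, and it is this orientation that lets one define $u(e)$ as a \emph{signed} transverse measure and verify $\nabla^*u(y)=\delta_x(y)$ directly. Your proposal would need either to supply an orientation on the lamination coming from Theorem~\ref{thm:non-parabolic-criteria} (which is not available in general) or to replace that theorem by the harmonic-function construction, which is what the paper does.
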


\subsection{The difference between planar and non-planar surfaces with bounded pants decompositions} We consider $X$ a planar infinite Riemann surface with a bounded pants decomposition. An important observation is that the pants graph $\mathcal{G}$ is a tree since every closed curve on $X$ is separating. 

\begin{thm}
\label{thm:planar-bounded-finite-ends}
Let $X$ be a planar Riemann surface with bounded pants decomposition and finitely many infinite ends. Then $X\in O_G$.
\end{thm}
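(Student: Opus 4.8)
The plan is to use Theorem~\ref{thm:bounded-pants-graph-O_G} to reduce the statement to a purely combinatorial fact: the simple random walk on the pants graph $\mathcal{G}$ of $X$ is recurrent. The two structural inputs are (i) since $X$ is planar, every simple closed curve on $X$ separates, so no cuff can lie on a cycle of $\mathcal{G}$ — if a cuff $\gamma$ lay on a cycle then $X\setminus\gamma$ would stay connected (travel around the rest of the cycle), contradicting planarity — and in particular $\mathcal{G}$ has no loops and no multiple edges, i.e. $\mathcal{G}$ is a locally finite tree (of bounded valence, by bounded geometry); and (ii) $X$ has only finitely many infinite ends. For (ii) I would use a compact exhaustion of $X$ by finite unions of the bounded pairs of pants: the complementary subsurfaces that carry infinitely many pants correspond exactly to the infinite components of the complements of the finite balls in $\mathcal{G}$, hence to the ends of the tree $\mathcal{G}$, while the punctures of $X$ give only finite complementary pieces and are invisible to $\mathcal{G}$. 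Thus $\mathcal{G}$ is a tree with exactly $k<\infty$ ends, where $k$ is the number of infinite ends of $X$.

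Next I would fix a base vertex $v_0$ and for each $n\geq 1$ let $B_n$ be the ball of radius $n$ about $v_0$ in $\mathcal{G}$. Since $\mathcal{G}$ is locally finite, $\mathcal{G}\setminus B_n$ has only finitely many connected components, of which at most $k$ are infinite, say $C_n^{(1)},\dots,C_n^{(j_n)}$ with $1\le j_n\le k$. Because $\mathcal{G}$ is a tree, each $C_n^{(i)}$ is attached to $B_n$ along a single edge, so the cutset $\Pi_n$ consisting of the edges joining $B_n$ to $\bigcup_i C_n^{(i)}$ has $|\Pi_n|=j_n\le k$; and removing $\Pi_n$ leaves $v_0$ inside $B_n$ together with the (finitely many, finite) bounded components of $\mathcal{G}\setminus B_n$, so $\Pi_n$ indeed separates $v_0$ from the ends of $\mathcal{G}$. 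Every edge of $\Pi_n$ runs between a vertex at distance $n$ and one at distance $n+1$, so the $\Pi_n$ are pairwise disjoint. By the Nash--Williams recurrence criterion applied to the unit-conductance network $\mathcal{G}$ (see \cite{Woess}),
$$
\mathrm{R}_{\mathrm{eff}}(v_0\leftrightarrow\infty)\ \geq\ \sum_{n\geq 1}\frac{1}{|\Pi_n|}\ \geq\ \sum_{n\geq 1}\frac{1}{k}\ =\ \infty ,
$$
so the simple random walk on $\mathcal{G}$ is recurrent, and hence $X\in O_G$ by Theorem~\ref{thm:bounded-pants-graph-O_G}.

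Equivalently, one can read off recurrence from the electrical network directly: the current from $v_0$ to infinity is supported on the \emph{core} subtree of $\mathcal{G}$ swept out by the geodesic rays from $v_0$, which is a finite tree with $k$ disjoint infinite rays attached; each ray has infinite resistance to infinity, and $k$ of them in parallel still do, while the finite \emph{bushes} of $\mathcal{G}$ are dead ends that carry no current. Either formulation works; I would present the Nash--Williams version since the cutsets are of the explicit bounded size $k$.

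The step that needs the most care is conceptual rather than computational. A tree with finitely many ends may well have exponential volume growth — arbitrarily large \emph{finite} subtrees can hang off every vertex — so one cannot deduce recurrence from a growth estimate or from rough invariance (Kanai's theorem does not apply here). What makes the argument go through is precisely the tree structure, which forces the cutsets separating $v_0$ from infinity to remain of size at most $k$; the only other point requiring attention is the identification of the ends of $\mathcal{G}$ with the infinite ends of $X$, i.e. verifying that the punctures of $X$ contribute only finite pieces to the exhaustion and therefore create no ends of $\mathcal{G}$.
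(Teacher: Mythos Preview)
Your proof is correct and shares its two main structural observations with the paper's argument: planarity forces the pants graph $\mathcal{G}$ to be a tree, and the finitely many infinite ends of $X$ correspond to finitely many ends of $\mathcal{G}$ (the punctures contributing only finite branches). Where you and the paper diverge is in the final step, proving that a locally finite tree with finitely many ends is recurrent.

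The paper uses the flow characterization stated in (\ref{eq:transient_walk}): it shows directly that any $u\in\ell^2(E)$ with $\nabla^*u(y)=\delta_x(y)$ must vanish. First $u$ is forced to be zero on every finite branch (by propagating inward from the degree-one leaves), leaving only the $k$ infinite rays emanating from $x$; on each such ray the condition $\nabla^*u=0$ forces $|u|$ to be constant along the ray, which is incompatible with $u\in\ell^2(E)$ unless $u\equiv 0$. Your argument instead appeals to the Nash--Williams criterion, exhibiting disjoint cutsets $\Pi_n$ of size at most $k$ and concluding $R_{\mathrm{eff}}(v_0\leftrightarrow\infty)\ge\sum_n 1/k=\infty$.

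Both arguments are clean; yours is perhaps the more portable, since Nash--Williams is a black box one can cite directly from \cite{Woess}, while the paper's argument stays entirely within the $\nabla^*$ framework already set up in the section and has the advantage that the same hands-on analysis of $u$ along rays is reused verbatim in the proof of Theorem~\ref{thm:countable-ends-planar} (the countable-ends case), where it is iterated through the Cantor--Bendixson hierarchy. Your closing remark that exponential volume growth does not obstruct recurrence here is well taken and worth keeping: it explains why one cannot simply invoke a growth or rough-isometry criterion and must use the tree structure explicitly.
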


\begin{proof}
By Theorem \ref{thm:bounded-pants-graph-O_G}, it is enough to show that the pants tree $\mathcal{G}$ is recurrent. The finite (isolated) ends correspond to punctures of $X$, and there are, at most, countably, many such ends. An infinite surface has at least one infinite (non-isolated) end. Given that $X$ has finitely many infinite ends, it follows that $X$ has countably many punctures that accumulate to all of its finitely many infinite ends.

We first note that the space of infinite ends of $X$ (i.e., the ends different than punctures of $X$) is homeomorphic to the Gromov boundary of $\mathcal{G}$ when $\mathcal{G}$ is considered as a metric space where all the edges have lengths $1$. Indeed, an end of $X$ is a nested sequence of components $\{ U_n\}_n$ of the complements of a compact exhaustion $\{ K_n\}_n$ of $X$. An infinite end is not a puncture; therefore, each $U_n$ contains infinitely many punctures. Since the pants decomposition is locally finite, only finitely many cuffs intersect $K_n$. Let $\alpha_n$ be a cuff in $U_n$. Then $\alpha_n$ divides $U_n$ into two components; one component contains $U_k\subset U_n$ for all $k\geq k_0(n)$, and the other component is a finite surface with punctures. If $\alpha_n$ does not separate $K_n\cup\alpha_n$ from the infinite end, then  
we replace $\alpha_n$ with a cuff $\beta_n\subset U_n$ that separates $K_n$ from the end of $X$ corresponding to $\{ U_n\}$. Starting from this $\beta_n$, there is a sequence of adjacent pairs of pants in $U_n$ going to infinity. This sequence gives a sequence of adjacent vertices in $\mathcal{G}$, which defines a point in the Gromov boundary of $\mathcal{G}$. A finite tree is attached to each vertex of this sequence because the complement of $\alpha_n$ has a finite topology. Therefore, to each infinite end of $X$, there corresponds an infinite subtree with one Gromov boundary point obtained by an infinite path of edges with attached finite trees at each vertex. The sizes of the attached finite trees may go to infinity.

Since the Gromov boundary of $\mathcal{G}$ consists of finitely many points, a unique infinite edge path (without backtracking) connects the fixed vertex $x$ to each end of $\mathcal{G}$. The complement of the union of these infinite edge paths has connected components consisting of finitely many edges. If $u:E\to\mathbb{R}$ satisfies $\nabla^*u(y)=0$ for all $y\neq x$, it follows that $u$ must be zero on each edge that has a vertex of degree one. Then $u$ is zero on all edges in each component of the complement of the infinite paths connecting $x$ to the ends of $\mathcal{G}$. 

Next, consider a single infinite path from $x$ to an end of $\mathcal{G}$. After finitely many edges away from $x$, the path does not share an edge with other infinite paths. In this case, we may assume that this path has only vertices of degree two (since $u$ is zero on the components of the complement). Thus, if $u$ is non-zero on a single edge, then it is equal to the same value (up to sign change $\pm$) on the rest of the path going to infinity. That contradicts $u\in\ell^2(E)$ unless $u\equiv 0$. This is true for all infinite paths. Theorem \ref{thm:bounded-pants-graph-O_G} implies $X\in O_G$.
\end{proof}

The above Theorem implies that a flute surface (i.e., an infinite planar surface with infinitely many punctures and one topological end) with a bounded pants decomposition (with an arbitrary topological configuration of the pants decomposition) is always in $O_G$. We also point out that a flute surface with bounded geometry is not necessarily in $O_G$ by an example of Kinjo (see \cite{Kinjo2010}).

Even when a surface has a bounded pants decomposition, but it is not planar, the type of the surface depends on the topological configuration of the pants decomposition. 
To see this, consider the infinite Loch Ness monster surface $N$, which has an infinite genus and only one topological end. The surface $N$ has many pants decompositions, and, most importantly, the pants graph is not a tree. For example, the infinite Loch Ness monster surface $N$ can be obtained by taking the surface of the $\epsilon$-neighborhood $\mathbb{Z}^2$-grid in $\mathbb{R}^3$, where $\mathbb{Z}^2$-grid is realized in the $xy$-plane. We form a bounded pants decomposition of $N$ as follows. Consider the geodesic representatives of all the isotopy classes of curves, which are the circles on $N$ (in the Euclidean metric of $\mathbb{R}^3$) with centers the midpoints of the edges of the $\mathbb{Z}^2$-grid. The complementary regions are homeomorphic to a sphere minus four disks, and the isometry group of $N$ maps any complementary region to any other complementary region. We choose a simple closed geodesic in one complementary region to make a pants decomposition of the region and propagate this simple closed geodesic to all other complementary regions by elements of the isometry group. In this fashion, we obtain a bounded pants decomposition of $N$. It is well-known that the (simple) random walk on the $\mathbb{Z}^2$-grid is recurrent (\cite{Woess}). By Theorem \ref{thm:bounded-pants-graph-O_G}, the geodesic flow on the Riemann surface $N$ obtained from the above construction is ergodic if and only if the simple random walk on the pants graph $\mathcal{G}$ is recurrent. The pants graph $\mathcal{G}$ is obtained from the $\mathbb{Z}^2$-grid by introducing two vertices for each element of $\mathbb{Z}^2$ (i.e., vertex of the $\mathbb{Z}^2$-grid), of the four edges meeting each old vertex we distribute two edges to each new vertex and we connect the two new vertices by an extra edge (see Figure 12).

\begin{figure}[h]
\leavevmode \SetLabels
\endSetLabels
\begin{center}
\AffixLabels{\centerline{\epsfig{file =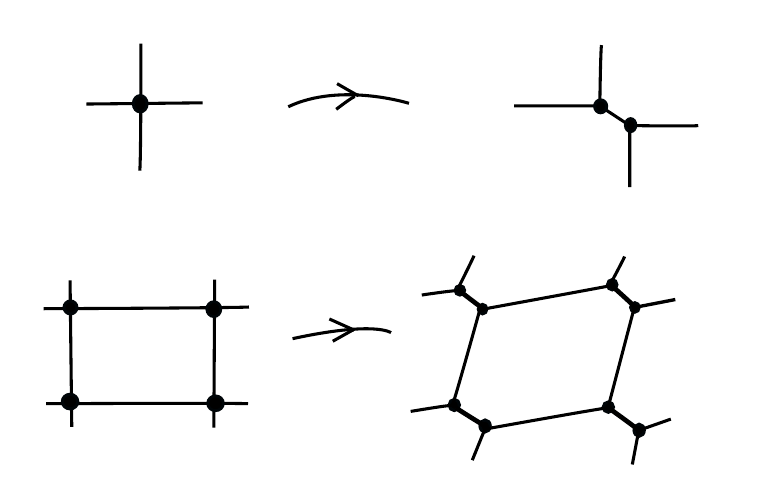,bb= 0 0 390 220,width=10.0cm,angle=0} }}
\vspace{-20pt}
\end{center}
\caption{Replacing a vertex of the $\mathbb{Z}^2$-grid with two vertices and an edge to obtain the pants graph $\mathcal{G}$ of the infinite Loch Ness monster.} 
\end{figure}

We claim the random walk on $\mathcal{G}$ is recurrent. Assume, on the contrary, that the random walk is not recurrent. Then there exists a non-trivial function $u\in \ell^2(E)$ and a vertex $x\in V$ such that $\nabla^*u(y)=\delta_x(y)$ for all $y\in V$ (see \cite[Theorem 2.12]{Woess}). Starting from $u\in \ell^2(E)$, we construct a function $\bar{u}\in\ell^2(\mathbb{Z}^2)$ which satisfies $\nabla^*\bar{u}(\bar{y})=\delta_{\bar{x}}(\bar{y})$ for a fixed $\bar{x}\in \mathbb{Z}^2$ and all $\bar{y}\in \mathbb{Z}^2$.
If $\bar{e}$ is the edge of the $\mathbb{Z}^2$, then there is a unique edge $e$ of $\mathcal{G}$ corresponding to it. We set $\bar{u}(\bar{e}):=u(e)$. It is immediate that $\bar{u}\in\ell^2(\mathbb{Z}^2)$. Let $\bar{x}\in\mathbb{Z}^2$ corresponds to two vertices of $\mathcal{G}$ one of which is $x$. There are several different possible orientations of edges of $\mathcal{G}$ at the vertex $x$, and we will consider the one possible orientation as in Figure 13. Then we have $\nabla^*u(y)=u(e_1)+u(e_2)-u(e)=\delta_x(y)=0$ which implies $u(e)=u(e_1)+u(e_2)$. In addition, we have $1=\nabla^*u(x)=u(e_1)+u(e_2)+u(e)=u(e_1)+u(e_2)+u(e_3)+u(e_4)=\bar{u}(\bar{e}_1)+\bar{u}(\bar{e}_2)+\bar{u}(\bar{e}_3)+\bar{u}(\bar{e}_4)=\nabla^*\bar{u}(\bar{x})$. In an analogous fashion we obtain $\nabla^*\bar{u}(\bar{y})=0$ for $\bar{y}\neq\bar{x}$. This contradicts the fact that the random walk on the $\mathbb{Z}^2$-grid is recurrent. Therefore the random walk on $\mathcal{G}$ is recurrent and the geodesic flow on $N$ is ergodic.

\begin{figure}[h]
\leavevmode \SetLabels
\L(.2*.7) $e_1$\\
\L(.15*.4) $e_2$\\
\L(.31*.2) $e_3$\\
\L(.33*.49) $e_4$\\
\L(.28*.51) $e$\\
\L(.73*.65) $e_1$\\
\L(.7*.3) $e_2$\\
\L(.79*.2) $e_3$\\
\L(.83*.5) $e_4$\\
\endSetLabels
\begin{center}
\AffixLabels{\centerline{\epsfig{file =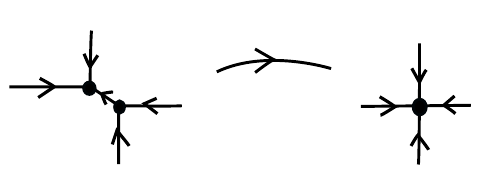,bb= 0 0 242 90,width=10.0cm,angle=0} }}
\vspace{-20pt}
\end{center}
\caption{From $\mathcal{G}$ to the $\mathbb{Z}^2$-grid.} 
\end{figure}

Let $N_1$ be the infinite Loch Ness monster surface obtained by taking the surface of the $\epsilon$-neighborhood of the $\mathbb{Z}^3$-grid in $\mathbb{R}^3$. Analogous to the construction of $N$, the family of simple closed geodesics corresponding to the edges of the $\mathbb{Z}^3$-grid decomposes $N_1$ into components homeomorphic to the sphere minus six disks, each corresponding to the element of $\mathbb{Z}^3$. Each component is decomposed into three pairs of pants by adding two simple closed geodesics that are disjoint from each other. We choose those geodesics to be propagated by elements of the isometry group of $N_1$ so that the obtained pants decomposition of $N_1$ has cuff lengths bounded between two positive constants. The pants graph $\mathcal{G}$ is obtained from the $\mathbb{Z}^3$-grid by replacing each vertex with four vertices and adding edges, as in Figure 14. The pants graph is more complicated than the construction with the $\mathbb{Z}^2$-grid.

\begin{figure}[h]
\leavevmode \SetLabels
\L(.23*.85) $\bar{e}_1$\\
\L(.31*.8) $\bar{e}_2$\\
\L(.32*.2) $\bar{e}_3$\\
\L(.23*.1) $\bar{e}_4$\\
\L(.15*.2) $\bar{e}_5$\\
\L(.13*.8) $\bar{e}_6$\\
\L(.45*.17) $e_5$\\
\L(.45*.63) $e_6$\\
\L(.55*.5) $f$\\
\L(.6*.11) $e_4$\\
\L(.65*.5) $g$\\
\L(.74*.14) $e_3$\\
\L(.72*.5) $h$\\
\L(.84*.6) $e_2$\\
\L(.77*.68) $e_1$\\
\L(.51*.31) $x_1$\\
\L(.61*.31) $x_2$\\
\L(.67*.31) $x_3$\\
\L(.77*.31) $x_4$\\
\endSetLabels
\begin{center}
\AffixLabels{\centerline{\epsfig{file =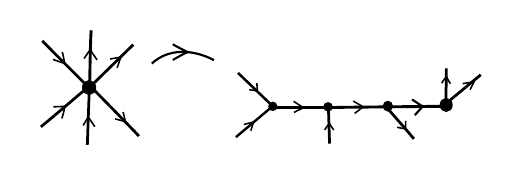,bb= 0 0 247 90,width=10.0cm,angle=0} }}
\vspace{-20pt}
\end{center}
\caption{From the $\mathbb{Z}^3$-grid to the pants graph $\mathcal{G}$ of $N_1$.} 
\end{figure}

It is a standard fact that the random walk on the $\mathbb{Z}^3$-grid is transient. Therefore, there exists a function $\bar{u}:\mathbb{Z}^3\to\mathbb{R}$ with $\bar{u}\in\ell^2(\mathbb{Z}^3)$ and $\nabla^*\bar{u}(\bar{y})=\delta_{\bar{x}}(\bar{y})$ for some fixed $\bar{x}\in\mathbb{Z}^3$ and all $\bar{y}\in\mathbb{Z}^3$. We construct a function $u:E\to\mathbb{R}$ that has the same properties as $\bar{u}$, where $E$ is the edge set of $\mathcal{G}$. This will imply that the pants graph $\mathcal{G}$ is transient, which implies that $N_1\notin O_G$. Mori \cite{Mori} and Rees \cite{Rees} (for higher order groups) obtained this result using different methods. 

Let $\bar{y}$ be a vertex of the $\mathbb{Z}^3$-grid with edges $\bar{e}_i$ for $1\leq i\leq 6$. We assume that they are oriented, as in Figure 14. We replace the configuration of the $\mathbb{Z}^3$-grid at the vertex $\bar{y}$ with the configuration on the right of Figure 14 and repeat this procedure at each vertex. Denote by $e_i$ the new edges on the right of Figure 14 that correspond to $\bar{e}_i$. Define $u(e_i):=\bar{u}(\bar{e}_i)$. Further, let
\begin{equation*}
\begin{array}l
u(f):=\bar{u}(e_6)+\bar{u}(e_5),\\
u(g):=u(f)+\bar{u}(e_4)=\bar{u}(e_6)+\bar{u}(e_5)+\bar{u}(e_4),\\
u(h):={u}(g)-\bar{u}(e_3)=\bar{u}(e_6)+\bar{u}(e_5)+\bar{u}(e_4)-\bar{u}(e_3).
\end{array}
\end{equation*}

Using the above definitions, we get
\begin{equation*}
\begin{array}l
\nabla^*u(x_1)=0,\\
\nabla^*u(x_2)=-u(g)+u(f)+u(e_4)=0,\\
\nabla^*u(x_3)=u(g)-u(e_1)-u(h)=0\ \mathrm{and}\\
\nabla^*u(x_4)=u(h)-u(e_1)-u(e_2)=\\
\hskip 1.5 cm -u(e_1)-u(e_2)-u(e_3)+u(e_4)+u(e_5)+u(e_6)=\delta_x(y).
\end{array}
\end{equation*}
Therefore $X\notin O_G$.

We also prove that planar surfaces with bounded pants decompositions and countably many ends are parabolic as well.

\begin{thm}
\label{thm:countable-ends-planar}
Let $X$ be an infinite planar surface with a bounded pants decomposition and countably many ends. Then $X\in O_G$.
\end{thm}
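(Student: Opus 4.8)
The plan is to reduce Theorem \ref{thm:countable-ends-planar} to Theorem \ref{thm:planar-bounded-finite-ends} (the finitely-many-ends case) by a compact-exhaustion argument, using the fact that recurrence of a graph is detected locally via the non-existence of a nonzero $u\in\ell^2(E)$ with $\nabla^*u = i_0\,\delta_x$, as recorded in \eqref{eq:transient_walk} and Theorem \ref{thm:bounded-pants-graph-O_G}. Since $X$ is planar, the pants graph $\mathcal{G}$ is a tree; the new feature compared to Theorem \ref{thm:planar-bounded-finite-ends} is that the space of infinite ends of $X$ — equivalently the Gromov boundary of the tree $\mathcal{G}$ — is now a countable (possibly infinite) compact set rather than a finite one. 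Punctures contribute finite subtrees as before and play no role, by exactly the argument already given in the proof of Theorem \ref{thm:planar-bounded-finite-ends}: $u$ must vanish on every edge incident to a degree-one vertex, hence on every finite subtree hanging off the ``skeleton'' of infinite paths.

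First I would set up the structure of $\mathcal{G}$ as in the proof of Theorem \ref{thm:planar-bounded-finite-ends}: fix a basepoint vertex $x$, and let $S\subset E$ be the union of all geodesic (backtrack-free) rays from $x$ to points of the Gromov boundary $\partial\mathcal{G}$. As before, $\mathcal{G}\setminus S$ consists of finite subtrees, so any $u\in\ell^2(E)$ with $\nabla^*u=i_0\delta_x$ off the skeleton vanishes identically on those finite subtrees; the whole problem lives on $S$. Now suppose, for contradiction, that such a nonzero $u$ exists. Because $\partial\mathcal{G}$ is countable, I would argue that $S$ is ``thin'': a countable compact subset of the boundary of a tree has a scattered (Cant--Bendixson) structure, and in particular there is a boundary point $\xi$ that is isolated in $\partial\mathcal{G}$. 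Along the ray $\rho_\xi$ from $x$ to such an isolated $\xi$, beyond some vertex all branch points have been passed (the only boundary point reachable past that vertex is $\xi$ itself), so the tail of $\rho_\xi$ is a bi-infinite-in-one-direction simple path of degree-two vertices in $S$. Then the flow equation $\nabla^*u=0$ along this tail forces $|u|$ to be constant along it, contradicting $u\in\ell^2(E)$ unless $u\equiv0$ on that tail.

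The main obstacle — and the point requiring the most care — is upgrading ``$u$ vanishes on the tail of every isolated-endpoint ray'' to ``$u\equiv0$ on all of $S$''. For this I would use transfinite induction on the Cantor--Bendixson rank of the countable compact set $\partial\mathcal{G}$. At each successor stage, the isolated points of the current derived set $\partial\mathcal{G}^{(\alpha)}$ correspond to rays whose tails become degree-two paths once one discards the edges already shown to carry $u=0$ (those feeding into strictly-higher-rank boundary points and into finite subtrees); the same constant-modulus-plus-$\ell^2$ argument kills $u$ on those tails. Limit stages are automatic since an edge lies in only finitely many of the relevant rays. Because $\partial\mathcal{G}$ is countable it is scattered, so the Cantor--Bendixson derivative eventually empties out, and the induction exhausts every edge of $S$; hence $u\equiv0$, contradicting transience. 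By Theorem \ref{thm:bounded-pants-graph-O_G}, $X\in O_G$. Alternatively, if one prefers to avoid transfinite bookkeeping, one can instead exhaust $X$ by subsurfaces whose complementary components have finitely many ends and invoke Theorem \ref{thm:planar-bounded-finite-ends} together with a monotonicity/Rayleigh comparison for extremal length (equivalently, effective resistance) to pass to the limit; I would present the Cantor--Bendixson version as the cleaner route.
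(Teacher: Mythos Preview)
Your proposal is correct and follows essentially the same approach as the paper: both arguments reduce to the tree $\mathcal{G}$ via Theorem \ref{thm:bounded-pants-graph-O_G}, observe that $u$ vanishes on the finite subtrees hanging off the skeleton, and then run a Cantor--Bendixson induction on the countable set of infinite ends, at each stage using the flute-surface argument (constant $|u|$ along an eventually degree-two tail forces $u=0$ there by $\ell^2$). The paper phrases the induction as successive truncations of the surface along separating cuffs, while you phrase it directly on the Gromov boundary $\partial\mathcal{G}$; these are the same mechanism.
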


\begin{proof}
Consider the graph $\mathcal{G}$, which is the dual graph to the pants graph of $X$ for the fixed bounded pants decomposition. By Theorem \ref{thm:bounded-pants-graph-O_G}, it is enough to prove that the random walk on $\mathcal{G}$ is recurrent. Note that the graph $\mathcal{G}$ is a tree since $X$ is planar.

By the transfinite induction, the space of ends $\mathcal{E}(X)$ of $X$ will become empty after applying countably many Cantor-Benedixson derivatives. Each puncture of $X$ is an isolated end of $X$ corresponding to either a bivalent or an univalent vertex of $\mathcal{G}$.

Taking the first derivative $\mathcal{E}(X)'$ on the space of ends of $X$, we obtain all infinite ends, i.e.-ends accumulated by other ends. Consider an isolated end ${\bf e}$ in $\mathcal{E}(X)'$. There exists a compact set $K_{\bf e}\subset X$ such that a single component of its complement $K_{\bf e}^C$ only approaches the end ${\bf e}$. This single component necessarily contains infinitely many cusps, and an infinite sequence of consecutive pairs of pants goes off to ${\bf e}$. This corresponds to a geodesic in $\mathcal{G}$, that is, an infinite edge path without backtracking that converges to a unique point on the Gromov boundary of $\mathcal{G}$. In addition to the edges of this path, at each vertex, there is attached, at most, a finite graph whose sizes may go to infinity. In particular, there are infinitely many cuffs on $X$, each separating $X$ into two parts, one of which approaches ${\bf e}$ and no other infinite ends of $X$. Among these cuffs, there is a cuff corresponding to the first edge of the above path in $\mathcal{G}$, and we truncate $X$ at this cuff, discarding the part approaching the end ${\bf e}$. We perform this truncation for each isolated end of $\mathcal{E}(X)'$ and denote the truncated surface by $X_1$. Note that the isolated ends of $X_1$ are the cuffs where we truncated and the remaining punctures. The graph dual to the pants graph of $X_1$ is connected by the edges corresponding to the cuffs where we performed the truncations to the dual pants graphs of the truncated parts of $X$.  

We perform the same process on $X_1$ for the isolated ends of $\mathcal{E}(X_1)'$. In this fashion, we obtain an edge path without backtracking in the dual graph of the pants graph of $X_1$ corresponding to the isolated ends of $\mathcal{E}(X_1)'$. To the vertices of each of these edge paths, we add finite trees if the attached pieces (to the cuffs) are finite surfaces, or we attach infinite paths corresponding to the isolated points of $\mathcal{E}(X)'$ from the previous step. We continue this process of taking the Cantor-Benedixson derivatives of $\mathcal{E}(X)$ until we arrive at an empty set. We conclude that to each infinite end in $\mathcal{E}(X)$, there corresponds an infinite path without backtracking in $\mathcal{G}$. For any two different ends, the corresponding paths are not bounded distance away in $\mathcal{G}$; therefore, they correspond to distinct points in the Gromov boundary. 

There is a hierarchy on these paths of $\mathcal{G}$ corresponding to infinite ends of $X$. First, there are paths corresponding to infinite ends accumulated only by the punctures, followed by the paths corresponding to infinite ends accumulated by the ends of the first level, and so on.
Assume now that $u:E\to\mathbb{R}$ satisfies $u\in\ell^2(E)$ and $\nabla^*u(y)=\delta_x(y)$. To see that $X\in O_G$, it is enough to prove that $u(y)\equiv 0$. Let $\{ e_i\}_{i=1}^{\infty}$ be an infinite edge path in $\mathcal{G}$ at the first level in the hierarchy. Let $x_1$ be the vertex of $e_1$ not shared by $e_2$. Consider the edge path 
$\{ e_i\}_{i=1}^{\infty}$ and all edges of $\mathcal{G}$ to this path except the ones that are connected through the vertex $x_1$. This subtree of $\mathcal{G}$ consists of $\{ e_i\}_{i=1}^{\infty}$ to which we add finite trees attached to its vertices except for the first vertex $x_1$. This subtree is isomorphic to a tree arising from the dual graph of the pants graph of a flute surface. 
When we restrict $u$ to the subtree that contains the edge path 
$\{ e_i\}_{i=1}^{\infty}$, we notice that the restricted function $u_1$ satisfies $\nabla^*u_1(y)=0$ if $y\neq x_1$ because this is inherited from $u$ and that $\nabla^*u_1(x_1)\neq 0$ because we erased at least one edge at $x_1$. Then the proof of Theorem \ref{thm:planar-bounded-finite-ends} gives that $u_1$ is constantly zero on these edges. We establish this for all paths in the first level of the hierarchy. The argument is then repeated on the paths of the second level since we can truncate the first level path (the function $u$ is constantly zero there). Indeed, after truncating, the second level path and the attached edges are again exactly as in the first level. After repeating this process, we conclude that $u$ is zero on all edge paths corresponding to the ends. The remaining edge set is finite, and we conclude that $u\equiv 0$. Thus $X\in O_G$.
\end{proof}

\subsection{Non-parabolic flute surfaces with bounded geometry} \label{subsec:nonPflutes}
Recall that an infinite flute Riemann surface is a planar surface with countably many punctures that accumulate to a single non-isolated topological end. Therefore, a flute Riemann surface is conformal to either a complex plane $\mathbb{C}$ minus a discrete, countable subset $A$ or to a disk $\mathbb{D}$ minus a discrete, countable subset $A$ of the disk. In the former case, the surface $X$ is parabolic, and in the latter case, the surface $X$ is not parabolic. When $X=\mathbb{D}\setminus A$, there are two possibilities: $A$ accumulates to all points of the unit circle $S^1$, or $A$ does not accumulate to an open arc of $S^1$. In the former case, the covering group of $X$ is of the first kind (see \cite{BS}). 

Sullivan posed a problem of finding examples of $\mathbb{D}\setminus A$ such that $A$ accumulates to all points of $S^1$ in terms of the hyperbolic invariants. We give a description of such surfaces in terms of the shear coordinates to an ideal triangulation decomposition of the surface. 

We start with a countable planar graph with bounded valence and one topological end whose simple random walk is transient. Such planar graphs are constructed by Geyer and Merenkov \cite{GM} (see also \cite{BMS}). In a private communication, Merenkov suggested using the graph obtained by embedding a trivalent tree in the plane and attaching a half-plane square grid to each complementary component. Further, each complementary component can be triangulated by adding finitely many edges between the vertices of the component. The obtained graph called $\mathcal{H}$, still has bounded valence and one topological end, and it is transient.

We draw a small circle around each vertex so that the circles over all vertices are disjoint. The complementary part of each triangle with respect to the union of the disks is topologically a hexagon, and it will correspond to a right-angled hexagon on the constructed surface. We change the graph $\mathcal{H}$ by adding a vertex inside each triangle and connecting this vertex by edges to the vertices of the triangle and to the new vertices of the adjacent triangles. We erase the old edges of the triangle. The new graph $\mathcal{G}$ is also transient because we can construct a non-zero potential function leaving to infinity by the methods above.

We form the surface $X$ whose decomposition into pairs of pants and right-angled hexagons corresponds to the graph $\mathcal{G}$. First, we realize the surface topologically in the plane. Indeed, to each vertex of $\mathcal{H}$, we take a pair of pants with one cuff and two punctures, which is realized by taking a small circle around the vertex and two punctures inside the disk bounded by the circle. To an edge of $\mathbb{H}$ connecting two vertices $v_1$ and $v_2$, we associate an ideal geodesic connecting one puncture in the pair of pants corresponding to $v_1$ to one puncture in the pair of pants corresponding to $v_2$. We arrange for all geodesics that we have drawn to be disjoint in the plane. Finally, all punctures in the pairs of pants are connected by ideal geodesics, and then the rest of the surface is triangulated by only making the punctures to be the vertices of the triangulation $\mathcal{T}$.

We form the flute surface $X$ by taking an ideal geodesic triangle for each triangle of $\mathcal{T}$ and gluing adjacent ideal triangles by zero shears (for the definition of shears, see, for example, \cite{saric}). The surface $X$ has the covering Fuchsian group of the first kind since zero shears guarantee that the lift of the ideal geodesic triangulation to $\mathbb{H}$ is the Farey triangulation (see also \cite[Theorem C]{saric}). 

We introduce cuffs (simple closed geodesics) corresponding to the small circles around the vertices of $\mathcal{H}$. Each cuff separates the two punctures from the rest of the surface and forms the boundary of a geodesic pair of pants with two punctures. We partition the complement of the union of pairs of pants into right-angled hexagons.
If two cuffs are connected by ideal geodesics from the triangulation, then we draw a unique common orthogonal between the two cuffs. These orthogonals are pairwise disjoint and divide $X$ into right-angled hexagons since $\mathcal{T}$ is a triangulation. By choice of zero shears, the lengths of cuffs and the lengths of the common orthogonal to the cuffs depend on the valence at the vertex of $\mathcal{H}$ and the combinatorics of the ideal triangles that we have drawn to define $X$. Since the valence is bounded and the number of combinatorial possibilities is finite, the surface $X$ has bounded geometry. Finally, the graph dual to the decomposition of $X$ into pairs of pants and right-angled hexagons with sides bounded away from $0$ and $\infty$ is isomorphic to $\mathcal{G}$. Since $\mathcal{G}$ is transient, we conclude by Theorem \ref{thm:bounded-pants-graph-O_G} that $X\notin O_G$. We obtained

\begin{thm}
\label{thm:nonparabolic-bdd-geom}
There exists a bounded geometry flute surface $X$ that is not parabolic. The surface $X$ can be chosen to have a locally finite ideal geodesic triangulation with zero shears.
\end{thm}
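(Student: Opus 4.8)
The plan is to build $X$ directly from a transient planar graph of bounded valence with a single topological end, and to transport transience through each step of the construction so that Theorem~\ref{thm:bounded-pants-graph-O_G} applies at the end. First I would fix a planar graph $\mathcal{H}$ of bounded valence with one end whose simple random walk is transient: the graphs of Geyer--Merenkov \cite{GM} work, as does the concrete model obtained by embedding a trivalent tree in the plane, attaching a half-plane square grid to each complementary region, and triangulating each region by adding finitely many edges between its vertices. Transience of a bounded-valence graph is equivalent, via \cite[Theorem 2.12]{Woess}, to the existence of a nonzero $u\in\ell^2(E)$ with $\nabla^*u=\pm i_0\delta_x/m$; since triangulating adds only finitely many edges per bounded region, such a potential can be extended (with the new weights obtained by summing along the inserted edges as in the Loch Ness computations earlier in the paper) without destroying $\ell^2$-summability, so $\mathcal{H}$ remains transient.

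Next I would pass from $\mathcal{H}$ to the graph $\mathcal{G}$ that will be dual to the pants-and-hexagon decomposition: inside each triangle of $\mathcal{H}$ place a new vertex, join it to the three vertices of the triangle and to the new vertices of the three adjacent triangles, and delete the original triangle edges. The same kind of potential-extension argument used for the $\mathbb{Z}^3$-grid in \S\ref{sec:type-graph} produces, from a transient potential on $\mathcal{H}$, an explicit transient potential on $\mathcal{G}$; hence $\mathcal{G}$ is still transient, still of bounded valence, and still one-ended.

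Then I would realize $\mathcal{G}$ geometrically. To each vertex of $\mathcal{H}$ I associate a geodesic pair of pants with one cuff and two cusps (a small circle around the vertex, with two punctures inside); to each edge of $\mathcal{H}$ I associate an ideal arc joining a cusp of one such pair of pants to a cusp of the adjacent one, drawn so that all these arcs are disjoint in the plane; and I complete this to a locally finite ideal triangulation $\mathcal{T}$ whose vertices are exactly the cusps. Gluing an ideal hyperbolic triangle for each face of $\mathcal{T}$ with \emph{all shears zero} gives a complete hyperbolic surface $X$; by \cite[Theorem~C]{saric} the developing map carries the lift of $\mathcal{T}$ to the Farey tessellation, so the covering Fuchsian group is of the first kind, and $X$ is a flute surface (a disk minus a discrete set accumulating to every point of $S^1$). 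Introducing the cuff around each vertex of $\mathcal{H}$, together with the unique common orthogonals between cuffs that are joined by arcs of $\mathcal{T}$, decomposes $X$ into geodesic pairs of pants (with two cusps) and right-angled hexagons whose dual graph is precisely $\mathcal{G}$; disjointness of the cuffs and of the orthogonals is arranged exactly as in \S\ref{sec:const-rect}.

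The main obstacle, and the step I would spend the most care on, is verifying that this decomposition has bounded geometry, i.e.\ that all cuff lengths and orthogonal lengths lie in $[1/C,C]$ for a uniform $C$. Here the choice of zero shears is essential: the length of a cuff, and the lengths of the orthogonals landing on it, are determined entirely by the combinatorics of the finitely many ideal triangles of $\mathcal{T}$ incident to the corresponding vertex of $\mathcal{H}$ together with the prescribed zero shears. Since $\mathcal{H}$ has bounded valence, only finitely many such local combinatorial configurations occur, each producing a definite finite length bounded away from $0$; this yields uniform two-sided bounds. With bounded geometry established and the dual graph $\mathcal{G}$ transient, Theorem~\ref{thm:bounded-pants-graph-O_G} gives $X\notin O_G$, so $X$ is a bounded-geometry flute surface that is not parabolic and carries the locally finite zero-shear ideal triangulation $\mathcal{T}$, as claimed.
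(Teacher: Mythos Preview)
Your proposal is correct and follows essentially the same route as the paper: start from a transient one-ended planar graph of bounded valence (Geyer--Merenkov, or the trivalent tree with attached half-plane grids, triangulated), modify it to the dual graph $\mathcal{G}$ by inserting a vertex in each triangle and rewiring, realize $\mathcal{G}$ geometrically via the zero-shear ideal triangulation with a two-cusp pair of pants at each vertex of $\mathcal{H}$, and then invoke bounded valence to get finitely many local combinatorial types and hence bounded geometry, so that Theorem~\ref{thm:bounded-pants-graph-O_G} applies. If anything, you are slightly more explicit than the paper about why transience survives the passage from $\mathcal{H}$ to $\mathcal{G}$, which the paper dispatches with a reference to ``the methods above.''
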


\section{Appendix}

\noindent {\it Proof of Theorem 7.1}
Assume the random walk on $\mathcal{G}$ is transient. Let $u\in\ell^2(E)$ satisfy (\ref{eq:transient_walk}) for $i_0=-m(x)$. The function $u$ induces a new orientation of the edges $E$ by keeping the old orientation on $e\in E$ if $u(e)>0$ and by reversing the orientation of $e\in E$ if $u(e)<0$. An edge $e$ with an endpoint $x\in V$ is said to be {\it incoming at} $x$ if $e^+=x$, it is {\it outgoing at} $x$ if $e^-=x$. After changing the orientation on the edges, the function $u(e)$ is changed to be the absolute value of the old value at $e$, and we keep the same notation $u(e)$ for the new function. The new function $u\in\ell^2(E)$ is non-negative and it satisfies $\nabla^*u(y)=\delta_x(y)$.  This condition implies that the sum of the values of $u$ on the incoming edges equals the sum of the values of $u$ on the outgoing edges at each vertex of $\mathcal{G}$ except at the vertex $x$. At the vertex $x$, the difference between the sum of the values of $u$ on the incoming edges and the sum of the values of $u$ on the outgoing edges is one. We use this property of $u$ to construct a partial measured foliation on $X$ with finite Dirichlet integral and transient trajectories. By \cite[Theorem 1.1]{Saric23}, this will imply that $X\notin O_G$. 

The vertex $x$ with the above property is arbitrary, and we choose it to correspond to a pair of pants  $P_x$. As in the section above, we form a collection of annuli around the cuffs and rectangles around the edges of the hexagons not lying on the cuffs (i.e., connecting two cuffs).  Further, we form rectangles in the pairs of pants connecting pairs of distinct cuffs depending on the function $u$ (or, more precisely, depending on the orientation of the edges of $\mathcal{G}$).

Consider a pair of pants $P_y$ different from $P_x$, where $y\in V$. Let $\alpha_1$, $\alpha_2$, and $\alpha_3$ be the cuffs of $P$. Each cuff can be a boundary of another pair of pants, or one cuff is equal to another (which happens when the closure of $P$ in $X$ is a torus with a geodesic boundary), or a cuff meets finitely many hexagons from the other side.

If all three cuffs $\alpha_1$, $\alpha_2$, and $\alpha_3$ meet pairs of pants on the other side, then the vertex $y$ is trivalent in $\mathcal{G}$. Two edges are incoming, and one is outgoing, or one edge is incoming, and two are outgoing. Assume we are in the former case, and that the edges $e_1$ and $e_2$ corresponding to the cuffs $\alpha_1$ and $\alpha_2$ are incoming and the edge $e_3$ corresponding to the cuff $\alpha_3$ is outgoing. Then we have $u(e_1)+u(e_2)=u(e_3)$. We form two oriented rectangles that start at the annuli around $\alpha_1$ and $\alpha_2$ end on the annulus around $\alpha_3$. When two edges are outgoing and one incoming, the choice of the oriented rectangles is analogous. 

If $\alpha_1=\alpha_2$, and if $\alpha_3$ meets another pair of pants on the other side, then the vertex $y$ has a loop $e$ (corresponding to $\alpha_1=\alpha_2$) and an edge $e_3$ corresponding to $\alpha_3$. The loop $e$ is both incoming and outgoing at $y$, and the edge $e_3$ is either outgoing or incoming at $y$. This implies that $u(e)+u(e_3)=u(e)$ which gives $u(e_3)=0$. Therefore, we do not need a rectangle representing $e_3$. We can also set $u(e)=0$ because $e$ does not connect to the other edges of $\mathcal{G}$ and (\ref{eq:transient_walk}) remains valid after this modification. 

When at least one of the cuffs of $P_y$ meets hexagons on the other side, then for each such cuff, we consider the sum of the values of the function on the edges of $\mathcal{G}$ that correspond to the sides of the hexagons meeting the cuff. At the vertex $y$ in the graph $\mathcal{G}$, edges are divided into three families, each corresponding to a cuff of $P_y$ (see Figure 15). If the sum of the values of $u$ on the edges corresponding to a cuff is positive, then we will treat that as a single incoming edge at $y$, and if the sum is negative, then we will treat this family as a single outgoing edge at $y$. The total sum of all values of $u$ over the three families is zero. Then, we add two rectangles in $P_y$, connecting two pairs of cuffs $\alpha_1$ and $\alpha_2$ to the third cuff $\alpha_3$ if the families of edges corresponding to $\alpha_1$ and $\alpha_2$ are both outgoing or both incoming. The other possibilities are treated accordingly, and the rectangles are appropriately oriented.

\begin{figure}[h]
\leavevmode \SetLabels
\L(.32*.13) $e_1^{\alpha_1}$\\
\L(.45*.1) $e_2^{\alpha_1}$\\
\L(.55*.12) $e_3^{\alpha_1}$\\
\L(.31*.78) $e_1^{\alpha_2}$\\
\L(.4*.79) $e_2^{\alpha_2}$\\
\L(.6*.82) $e_3^{\alpha_3}$\\
\L(.637*.7) $e_2^{\alpha_3}$\\
\L(.69*.57) $e_1^{\alpha_3}$\\
\endSetLabels
\begin{center}
\AffixLabels{\centerline{\epsfig{file =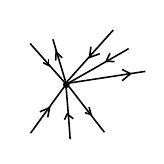,bb= 0 0 83 80,width=6.0cm,angle=0} }}
\vspace{-20pt}
\end{center}
\label{fig12}
\caption{The partition of the edges of $\mathcal{G}$ at a vertex $y$ corresponding to a pair of pants with cuffs $\alpha_1$, $\alpha_2$ and $\alpha_3$.} 
\end{figure}

Assume that a vertex $y$ of the graph $\mathcal{G}$ corresponds to a hexagon $H$. There are at most $3M^2+3$ edges at $y$. The sum of the values of $u$ on the incoming edges at $y$ equals the sum of the values of $u$ of the outgoing edges at $y$. We form a partial foliation corresponding to this configuration supported in the union of the rectangles corresponding to the sides of the hexagon $H$ that do not lie on the cuffs and the annuli around the cuffs. We realize the vertex $y$ of $\mathcal{G}$ as a point in $H$ and the edges at $y$ as geodesic arcs $e_i$ from $y$ that cross the corresponding sides of $H$ (see Figure 16). The geodesic arcs are given orientation such that they finish at $y$ or start at $y$ to agree with the orientation of the corresponding edges in $\mathcal{G}$. When two adjacent arcs $e_i$ and $e_{i+1}$ have opposite orientations, they make an oriented arc that can be homotoped modulo its endpoints to the boundary of the hexagon. If $u(e_i)<u(e_{i+1})$ then erase $e_i$ and replace it with a thin rectangle (with an appropriate orientation) going from the endpoint of $e_i$ to the endpoint of $e_{i+1}$ inside the prescribed neighborhood of the identity. When $u(e_i)=u(e_{i+1})$ then we erase both arcs. By continuing this process, after finitely many steps, we erase all geodesic arcs representing edges at $y$ and replace them with rectangles inside the neighborhood of the boundary of the hexagon $H$. The rectangles start and end at the place where the edges are replaced. They are oriented and have assigned weights from the values of the (positive) function $u$ (see Figure 16).

\begin{figure}[h]
\leavevmode \SetLabels
\L(.26*.18) $e_1$\\
\L(.45*.03) $e_2$\\
\L(.72*.14) $e_3$\\
\L(.65*.61) $e_4$\\
\L(.455*.79) $e_5$\\
\L(.28*.59) $e_6$\\
\endSetLabels
\begin{center}
\AffixLabels{\centerline{\epsfig{file =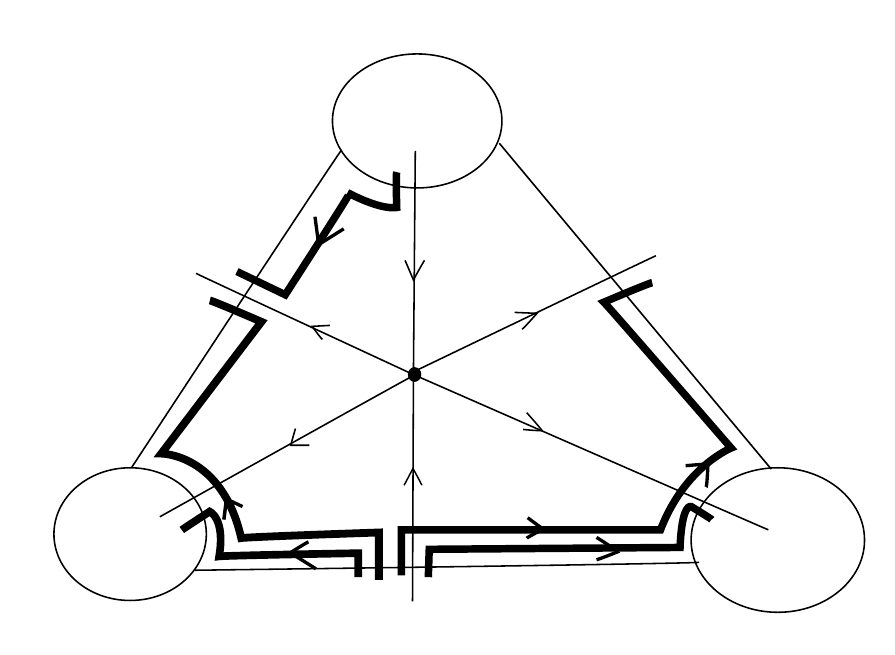,bb= -20 0 455 295,width=8.0cm,angle=0} }}
\vspace{-20pt}
\end{center}
\label{fig13}
\caption{The homotopy of the edges of $\mathcal{G}$ at a vertex $y$ corresponding to a hexagon $H$. The thick lines are rectangles obtained by the homotopy.} 
\end{figure}

Adding the rectangles in the pairs of pants and hexagons above produces a family of oriented rectangles with positive weights. A cuff meets several rectangles from both sides with different orientations. If a cuff $\alpha$ on the boundary of a pair of pants, then $\alpha$ meets either one or two rectangles from that pair of pants. If a cuff is on the boundary of a planar part, then $\alpha$ meets rectangles defined as neighborhoods of the sides of the hexagons that are orthogonal to $\alpha$. The cuff $\alpha$ is on the boundary of two pairs of pants or a pair of pants and a planar part or on the boundary of two planar parts. 

\begin{figure}[h]
\leavevmode \SetLabels
\L(.33*.3) $\alpha$\\
\endSetLabels
\begin{center}
\AffixLabels{\centerline{\epsfig{file =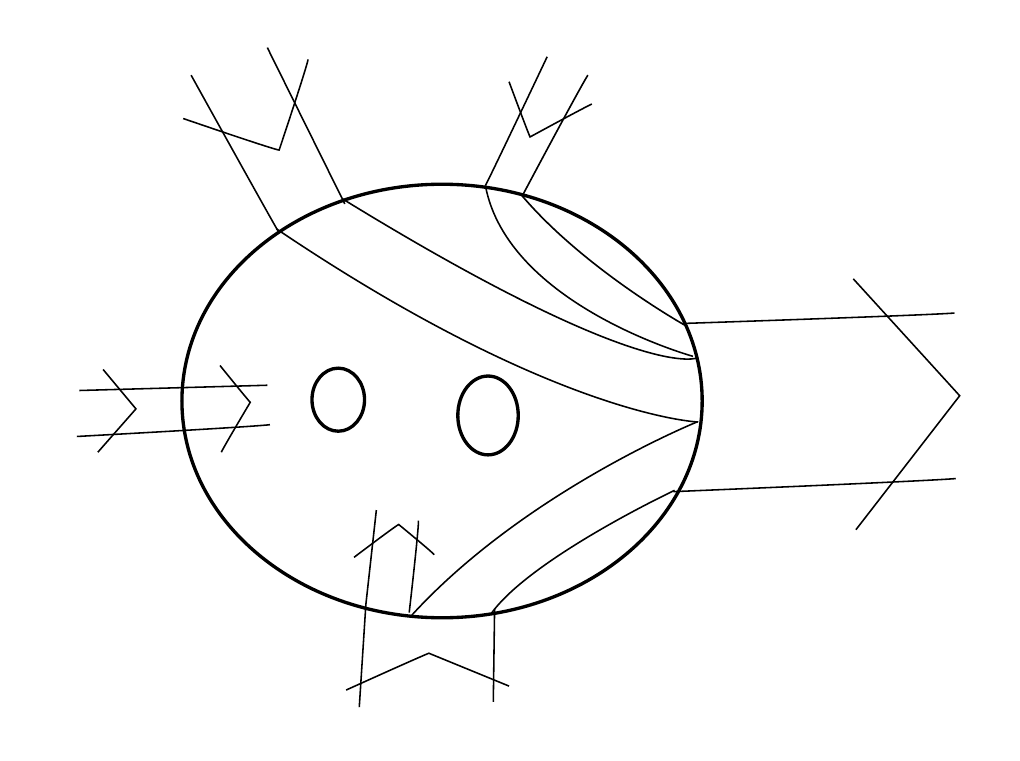,bb= 0 0 390 390,width=6.0cm,angle=0} }}
\vspace{-20pt}
\end{center}
\label{fig13}
\caption{The rectangles with opposite orientations on a cuff $\alpha$.} 
\end{figure}

When $\alpha$ is on the boundary of a planar part, the edges of $\mathcal{G}$ that correspond to $\alpha$ can have different orientations. We connect (inside the annulus around $\alpha$) the neighboring rectangles corresponding to the edges and assign the width given by a smaller of the two weights. Then, the smaller weight decreases the rectangle with the larger weight and crosses the cuff $\alpha$, and the rectangle with the smaller weight does not cross $\alpha$ (see Figure 17). We repeat this process until only rectangles with the same orientation remain. This process is repeated on both sides of all cuffs. The total $u$ value of the rectangles on one side that cross $\alpha$ equals the total $u$ value of the rectangles from the other side that cross $\alpha$. We connect the rectangles from one side to the rectangles from the other side inside the annulus around $\alpha$ so that the weights on the connected rectangles agree at their connections with the $u$ values and that the rectangles are mutually disjoint (see Figure 18).

\begin{figure}[h]
\leavevmode \SetLabels
\endSetLabels
\begin{center}
\AffixLabels{\centerline{\epsfig{file =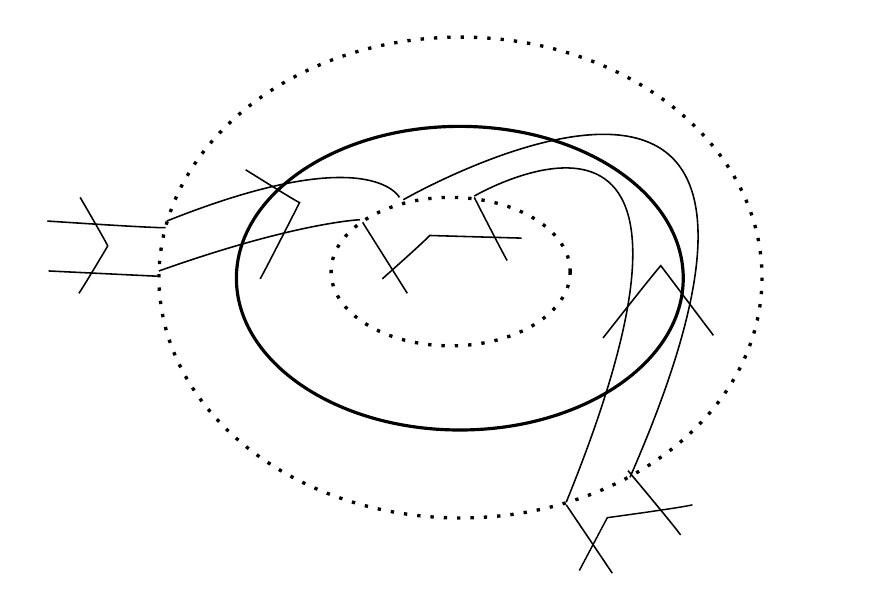,bb= 0 0 390 390,width=6.0cm,angle=0} }}
\vspace{-20pt}
\end{center}
\label{fig13}
\caption{Connecting the rectangles from the opposite side of $\alpha$.} 
\end{figure}

Therefore, we obtained a set of oriented rectangles that connect while respecting the orientation. The weights on the rectangles are obtained from the function $u:\mathcal{G}\to\mathbb{R}$. The weight on each rectangle is a linear function of the $u$ values of a bounded number of edges of $\mathcal{G}$ that are neighbors of either the edge to which the rectangle corresponds or edges adjacent to a cuff whose annulus contains the rectangle. Moreover, the coefficients of the linear function are $\pm 1$, and therefore, the weights are $\ell^2$ functions on the set of rectangles. We introduce a measured partial foliation $\mathcal{F}_u$ by foliating the rectangles and assigning the transverse measure given by the weight. Let $\mu$ be the measured geodesic lamination homotopic to $\mathcal{F}_u$. Note that 
$$
\sum_n [i(\alpha_n,\mu )^2+i(\beta_n,\mu )^2]+\sum_s i(\beta_s,\mu )^2 <\infty
$$
because the number of rectangles that each of $\alpha_n$, $\beta_n$ and $\beta_s$ can intersect is bounded.
By Theorem \ref{thm:holq-ml-bdd}, the corresponding partial foliation $\mathcal{F}_u$ has a finite Dirichlet integral. Then, there exists a unique finite-area holomorphic quadratic differential $\varphi_u$ whose horizontal foliation is equivalent to $\mathcal{F}_u$ (see Theorem \ref{thm:main} and \cite[Theorem 5]{Saric23}). 

To prove that $X\notin O_G$, it is enough to prove that the set of transient (i.e., leaving every compact subset) horizontal leaves of $\varphi_u$ has a non-zero transverse measure. Consider a finite-area geodesic subsurface $Y$ of $X\setminus P_x$ whose boundary consists of finitely many cuffs in $\{ P_j\}_j$. The restriction of the horizontal foliation of $\varphi_u$ can be made transverse to the boundary by slightly modifying the finite surface $Y$. We consider the set of the horizontal leaves of $\varphi_u$ starting from the cuff(s) of $P_x$  corresponding to the incoming edges of $\mathcal{G}$. The Poincar\'e recurrence theorem for finite surfaces with boundary (see \cite[Theorem 5.2]{FLP}) says that almost every leaf ends at a boundary of the finite surface. By (\ref{eq:transient_walk}), the total transverse measure of the horizontal leaves of $\varphi_u$ on the cuff(s) of $P_x$ corresponding to the incoming edge(s) at $x$ is larger by $1$ from the transverse measure of the outgoing horizontal leaves of $\varphi_u$ cuff(s) of $P_x$. Since the horizontal leaves of $\varphi_u$ are oriented in $X\setminus P_x$, it follows that the leaves with transverse measure at least $1$ have to end up on the boundary sides of the finite surface $Y$ that are not cuffs of $P_x$.

Consider exhaustion of $X\setminus P_x$ by finite bordered surfaces $Y_n$ and, on each finite surface, the set of leaves going from $\partial P_x$ to other boundary components.
These sets of leaves (when considering the full extension of the leaves) are descending for the inclusion along the exhaustion of $X$ by finite subsurfaces. Therefore, the intersection (of the sets of fully extended leaves) is a set of transient leaves of the horizontal foliation of $\varphi_u$ of $X\setminus P_x$ with a transverse measure of at least $1$. Thus $X\notin O_G$.

Assume that $X\notin O_G$. In \cite[Theorem 4.2]{Saric23}, a finite area holomorphic quadratic differential is constructed so that almost all the leaves connect a single geodesic pair of pants with infinity. We only consider the first part of this construction. Namely, let $P_x$ be the fixed pair of pants of the pants decomposition that corresponds to a fixed vertex $x\in V$ of the pants graph $\mathcal{G}$. Then there exists a harmonic function $g:X\setminus P_x$ such that $g|_{\partial P_x}=0$ and $0< g(z)<1$ for all $z\in X\setminus \bar{P}_x$. Let $g^*(z)$ denote the local harmonic conjugate of $g$ away from the isolated zeroes. Then $d(g+ig^*)$ is an Abelian differential on $X\setminus \bar{P}_x$, and its horizontal foliation $\mathcal{F}_g$ consists of curves $g^*(z)=const$. The function $g(z)$ monotonically increases along each horizontal trajectory starting at $\partial P_x$. This gives a global orientation on the horizontal leaves of $\mathcal{F}_g$ that start at $\partial P_x$ and leave every finite area subset of $X\setminus P_x$ (we remove recurrent leaves). The transverse measure to the horizontal foliation $\mathcal{F}_g$ is given by integrating $dg^*$ along transverse arcs. Let $\mu$ be measured lamination obtained by extending $\mathcal{F}_g$ to a proper partial measured foliation on $X$, straightening the leaves and pushing forward the measure. The leaves of the support of $\mu$ in $X\setminus P_x$ inherit the orientation from the leaves of the horizontal foliation of $\mathcal{F}_g$.

We first choose an arbitrary orientation of all edges of $\mathcal{G}$. An edge $e$ connects $e^-=y_1$ to $e^+=y_2$, where $y_1$ and $y_2$ can be pairs of pants or hexagons or one of each kind. Let $a$ be the arc of the intersection of the closures of $y_1$ and $y_2$. We define $u(e)$ to be the transverse measure of the geodesics of the support of $\mu$ that intersect $a$ and have the same orientation as $e$ minus the transverse measure of the geodesics of the support of $\mu$ that intersect $a$ and have opposite orientation to $e$. Note that two arcs $a_1$ and $a_2$ may intersect at an endpoint. Since $\mu$ has no atoms, this does not induce overcounting or ambiguities. For every vertex $y\neq x$, the total transverse measure of the geodesics of the support of $\mu$ that enter the corresponding pair of pants or hexagon is equal to the transverse measure of the geodesics exiting it. Thus, we have that (\ref{eq:transient_walk}) holds for $y\neq x$, and it remains to consider the edges with one endpoint at $x$. Orient all edges to start at $x$ and define $u$ on each such edge to be the transverse measure of the geodesics of the support of $\mu$ that intersect corresponding arcs. Then (\ref{eq:transient_walk}) holds for $y= x$ with an appropriate constant. 
Since $\mu$ is in $ML_{int}(X)$ it satisfies 
$$
\sum_n [i(\alpha_n,\mu )^2+i(\beta_n,\mu )^2]+\sum_s i(\beta_s,\mu )^2 <\infty.
$$
The values $u(e)$ are bounded functions of the above intersection numbers of nearby geodesics. Thus
$u\in\ell^2(E)$. Therefore $\mathcal{G}$ is transient. This finishes the proof.
$\Box$

\vskip .4 cm
\noindent
{\it Proof of Theorem \ref{thm:realization-arbitrary}}.
We consider a Riemann surface $X=\mathbb{H}/\Gamma$ with $\Gamma$ infinite but not necessarily of the first kind. Recall that $X$ has at most countably many border components (also called visible ends in \cite{BS}). A border component is the simple closed curve on the boundary of a funnel or an open curve on the boundary of an attached half-plane to the open geodesic component of the boundary of the convex core of $X$ (see \cite{AR}, \cite{BS}).

Let $\mathcal{F}$ be a proper partial measured foliation on $X$ that has finite Dirichlet integral $D_X(\mathcal{F})$ which realizes $\mu\in ML_{int}(X)$. Our goal is to construct $\varphi\in A(X)$ whose horizontal foliation is homotopic to $\mathcal{F}$. Note that the leaves of $\mathcal{F}$ can have their endpoints on the border components of $X$ and that also the leaves can go off to infinity of $X$ without reaching the border. 

We form a double Riemann surface $\widehat{X}$ of $X$ by taking a mirror image of $X$ across the border and gluing along the border. The proper partial foliation $\mathcal{F}$ extends by doubling to a partial foliation $\widehat{\mathcal{F}}$ of $\widehat{X}$. The foliation $\widehat{\mathcal{F}}$ on $\widehat{X}$ is not proper in general since a leaf of $\mathcal{F}$ can be homotopic to a single border component of $X$ and the double of this leaf is a closed leaf homotopic to a point on $\widehat{X}$. 

We form a new surface $\widehat{X}_n$ for any integer $n>1$ as follows. Each border component of $X$ is an interior curve of $\widehat{X}$, which is invariant under the reflection symmetry. Define $A_1$ to be a set that has exactly one point in each border component of $X$. Assume that $A_n$ is defined such that each border component of $X$ contains finitely many points of $A_n$. Each border component of $X$ is divided into finitely many subarcs by $A_n$. The set
 $A_{n+1}$ is obtained by adding to $A_n$ one point in each subarc. The choices are made such that $\cup_nA_n$ is dense in each component of the border of $X$. We define
$$
\widehat{X}_n=\widehat{X}\setminus A_n.
$$

We form a partial measured foliation $\widehat{\mathcal{F}}_n$ of $\widehat{X}_n$ by first erasing at most countably many leaves of the foliation $\widehat{\mathcal{F}}$ that contain points in $A_n$. Then we erase all leaves that are either homotopic to a point in $\widehat{X}_n$ or to a puncture in $\widehat{X}_n$, i.e., homotopic to a single point in $A_n$. Except for the countably many leaves containing points in $\cup_nA_n$, any leaf of $\widehat{\mathcal{F}}$ is in $\widehat{\mathcal{F}}_n$ for $n$ large enough. Denote by $\mathcal{F}_n$ the restriction of $\widehat{\mathcal{F}}_n$ to $X$ and note that it is a proper partial measured foliation with the Dirichlet integral bounded by $D_X(\mathcal{F})$. 

We claim that the double Riemann surface $\widehat{X}$ has a covering Fuchsian group of the first kind. Indeed, the boundary components corresponding to the funnels of $X$ are closed geodesics in $\widehat{X}$, and $X$ is doubled across them. Therefore, all funnels disappear in $\widehat{X}$, and no new funnels are created by doubling. The boundary component of $X$ corresponding to the attached half-plane becomes an open bi-infinite geodesic in $\widehat{X}$. By \cite{AR} and \cite{BS}, in order to have a half-plane inside a Riemann surface, it is necessary that the boundary geodesic of the half-plane does not intersect any simple closed curve. The boundary component of $X$ that became an open bi-infinite geodesic $g$ in $\widehat{X}$ necessarily intersects any closed curve whose one half goes from the geodesic $g$ back to the geodesics $g$ inside $X$ and the second half is its mirror image in $\widehat{X}\setminus X$. Therefore, $g$ is not on the boundary of a geodesic half-plane in $\widehat{X}$. Any other open bi-infinite geodesic in $\widehat{X}$ either stays in $X$ or stays in its complement, or it intersects a boundary geodesic of $\widehat{X}$. If a bi-infinite geodesic stays in $X$, then it intersects a closed geodesic of $X$ (since it is not on the boundary of a half-plane in $X$). A similar argument works for geodesics in the mirror image of $X$. If a bi-infinite geodesic intersects a boundary of $X$, then it also intersects a closed geodesic in $\widehat{X}$ since it enters the convex core of $X$. Therefore, $\widehat{X}$ contains no funnels or geodesic half-planes; its covering group is of the first kind.

Since the covering group of $\widehat{X}$ is of the first kind, it follows that the covering group of $\widehat{X}_n$ is also of the first kind. Therefore, 
by \cite[Theorem 1.6]{Saric23} (see also Theorem \ref{thm:par-ch-cross-cuts}), there exists a finite-area holomorphic quadratic differential $\widehat{\varphi}_n$ on $\widehat{X}_n$ that realizes $\widehat{\mathcal{F}}_n$.  Since $\widehat{\mathcal{F}}_n$ is invariant under the mirror symmetry, by the uniqueness, it follows that $\widehat{\varphi}_n$ is invariant under the mirror symmetry as well. 

Let $\varphi_n$ be the restriction of $\widehat{\varphi}_n$ to $X$. If a ray of a horizontal trajectory of $\varphi_n$ meets the boundary of $X$, then it has a single accumulation point on the boundary because $\varphi_n$  is a finite-area quadratic differential (see \cite{MardenStrebel}). The sequence $\{ {\varphi}_n\}_n$ on $X$ has a subsequence which converges locally uniformly because $\int_{{X}} |{\varphi}_n|\leq D_X(\mathcal{F})$. 

Denote the convergent subsequence by ${\varphi}_n$ and its limit by ${\varphi}$. Then ${\varphi}$ is a finite-area holomorphic quadratic differential on $X$. Let $\gamma$ be a simple closed (hyperbolic) geodesic on $X$. By \cite[Lemma 3.8]{Saric23}, we have that
$$
h_{\varphi}(\gamma )=i(\gamma ,\mathcal{F}).
$$

In our case, the Riemann surface $X=\mathbb{H}/\Gamma$ has a border obtained by taking the quotient (with respect to $\Gamma$) of the intervals of discontinuity of $\Gamma$. Thus, the equality of the $\varphi$-heights of the closed curves and the intersection of the closed curves with the foliation $\mathcal{F}$ does not guarantee that the horizontal foliation of $\varphi$ is equivalent to the measured foliation $\mathcal{F}$. To prove that they are equivalent, we need to consider cross-cut arcs (i.e., arcs with both endpoints on border components of $X$) and their heights. If a trajectory of $\mathcal{F}$ connects two different components of the border of $X$, then it intersects a simple closed geodesic of $X$ (since it crosses the convex hull of $X$ by \cite{BS}). Therefore, it is enough to consider the cross-cuts connecting the same component of the border that do not essentially intersect any closed geodesic, or equivalently, the cross-cuts that can be homotoped to a single border component modulo their endpoints.

Let $\delta$ be a geodesic arc on $X$ (not intersecting any simple closed geodesic) with both endpoints at the same component $b$ of the border of $X$, i.e., the geodesic arc $\delta$ converges at each end to a single border (ideal) point of $X$ on the same component. Let $p$ be a short geodesic arc orthogonal to $\delta$ such that every geodesic orthogonal to $p$ embeds in $X$ and has both of its endpoints in $d$. Consider the intersection number of the measured foliation $\mathcal{F}$ with each geodesic orthogonal to $p$. Then, for almost every geodesic, the intersection number is finite since $\mathcal{F}$ has a finite Dirichlet integral. Therefore, among all geodesics homotopic to an arc in $b$ modulo their endpoints, there exists a dense set of geodesics that have a finite intersection number with $\mathcal{F}$. From now on, we consider geodesics $\delta$ with $$i(\delta ,\mathcal{F})<\infty.$$

Given $\epsilon >0$, there exists a simple differentiable arc $\delta_1$ in $X$ which is homotopic to $\delta$ modulo the endpoints such that
\begin{equation}
\label{eq:delta_1}
h_{\varphi}(\delta_1)>\int_{\delta_1}|dv(x)|-\epsilon ,
\end{equation}
where $\varphi =u+iv$ and, if $\varphi \equiv 0$, then we set $h_{\varphi}(\delta_1)=\int_{\delta_1}|dv(x)|=0$.

  The arc $\delta_1$ divides $X$ into two components. One component is simply connected, and its boundary consists of 
$\delta_1$ and an open subarc $b_1$ of $b$. The boundary component $b$ is a hyperbolic geodesic of $\widehat{X}$, and it is divided into subarcs 
$\{ d_i^n\}$ by adding punctures $A_n$ for each $n$. 

For any $b_1'\subset b_1$, denote by $\mathcal{F}_n(b_1',\delta_1 )$ the set of horizontal trajectories of $\varphi_n$ that have one endpoint in $b_1'$ and essentially cross $\delta_1$, i.e., trajectories that cannot be homotoped modulo endpoints to a curve that does not intersect $\delta_1$. 
The total transverse measure of $\mathcal{F}_n(b_1',\delta_1 )$ is given by $i(\delta , \mathcal{F}_n(b_1',\delta_1 ))$.

Given $\epsilon >0$, there exists a compact subarc $b_1'$ of the open arc $b_1$ such that the transverse measure of $\mathcal{F}_n(b_1',\delta_1 )$ is greater than $i(\delta ,\mathcal{F}_n)-\epsilon$ for all $n$ large enough, that is,
\begin{equation}
\label{eq:c_1'-est}
i(\delta ,\mathcal{F}_n(b_1',\delta_1 ))\geq i(\delta ,\mathcal{F}_n)-\epsilon .
\end{equation}
The set of trajectories $\mathcal{F}_n(b_1',\delta_1 )$ of $\varphi_n$ is in a one-to-one correspondence with the trajectories of $\mathcal{F}$ that have one endpoint in $b_1'$, essentially intersect $\delta$ and do not separate a single puncture (at the level $n$)  from the rest of surface. Therefore, for $n$ large enough depending on $b_1'$, the family $\mathcal{F}_n(b_1',\delta_1 )$ stabilizes (with the exception of the countably many leaves meeting the union of all $A_n$).

To prove the above inequality, assume that it is not true. Then there exist an $\epsilon_0>0$, an increasing sequence $\{c_k'\}_k$ of closed subintervals of $b_1$ and an increasing sequence of integers $\{ n_k\}_k$ with $\mathcal{F}_{n_k}(c_k',\delta_1 )$ stable such that (up to countably many leaves)
$$\mathcal{F}(b_1,\delta_1 )=\cup_k \mathcal{F}_{n_k}(c_k',\delta_1 )$$
and
$$
i(\delta ,\mathcal{F}_{n_k}(c_k',\delta_1 ))\leq i(\delta ,\mathcal{F})-\epsilon_0 .
$$ 
Since $ i(\delta ,\mathcal{F})<\infty$ and $\{\mathcal{F}_{n_k}(c_k',\delta_1 )\}_k$ is an increasing sequence of sets, it follows that $i(\delta ,\mathcal{F})=\lim_{k\to\infty} i(\delta ,\mathcal{F}_{n_k}(c_k',\delta_1 ))\leq i(\delta ,\mathcal{F})-\epsilon_0$ which is a contradiction. This proves (\ref{eq:c_1'-est}).

For a subarc $\delta_1'$ of $\delta_1$, let $\mathcal{F}_{\varphi_n}(c_1',\delta_1')$ denote the horizontal trajectories of $\varphi_n$ that start at $c_1'$ and intersect $\delta_1'$. We note that $\mathcal{F}_{\varphi_n}(c_1',\delta_1')$ depends on the size and position of $\delta_1'$ (not only on the homotopy classes) while each of its leaves is homotopic to a leaf of $\mathcal{F}_n$. 

Given $\epsilon >0$, there is a compact subarc $\delta_1'$ of $\delta_1$ such that the family of all curves connecting $c_1'$ to $\delta_1\setminus \delta_1'$ has modulus bounded above by $\epsilon^2$. This follows by noting that the curve family through a single point has zero modulus and that $\delta_1\setminus\delta_1'$ can be made arbitrarily small.
  
We estimate the transverse measure of the leaves in $\mathcal{F}_{\varphi_n}(c_1',\delta_1\setminus\delta_1')$, where $\mathcal{F}_{\varphi_n}(c_1',\delta_1\setminus\delta_1')$ consists of the horizontal trajectories of $\varphi_n$ that have one endpoint in $c_1'$ and intersect $\delta_1\setminus\delta_1'$. Let $ I_{c_1',\delta_1\setminus\delta_1'}$ be a countable union of vertical arcs for $\varphi_n$ that intersects each trajectory of $\mathcal{F}_{\varphi_n}(c_1',\delta_1\setminus\delta_1')$ in exactly one point and does not intersect any other trajectory. Note that the extremal metric for the family $\mathcal{F}_{\varphi_n}(c_1',\delta_1\setminus\delta_1')$ is $\frac{|\sqrt{\varphi_n}(z)dz|}{\ell (z)}$, where $\ell (z)$ is the $\varphi_n$-length of the horizontal trajectory passing through $z$ (for example, see \cite{HakobyanSaric}). Let $\varphi_n=u_n+iv_n$.  Then, by an application of the Cauchy-Schwarz inequality, we have
$$
\int_{I_{c_1',\delta_1\setminus\delta_1'}}|dv_n(x)|\leq \Big{[}\int_{I_{c_1',\delta_1\setminus\delta_1'}}\frac{1}{\ell_n (x)}|dv_n(x)|\Big{]}^{\frac{1}{2}}\cdot \Big{[}\int_{I_{c_1',\delta_1\setminus\delta_1'}}{\ell_n (x)}|dv_n(x)|\Big{]}^{\frac{1}{2}}.
$$
Since $\int_{I_{c_1',\delta_1\setminus\delta_1'}}\frac{1}{\ell_n (x)}|dv_n(x)|$ is the modulus of 
$\mathcal{F}_{\varphi_n}(c_1',\delta_1\setminus\delta_1')$ (see \cite{HakobyanSaric}) and since 
$\int_{I_{c_1',\delta_1\setminus\delta_1'}}\ell_n (x)|dv_n(x)|$ is the $\varphi_n$-area of the set covered by $\mathcal{F}_n(c_1',\delta_1\setminus\delta_1')$, it follows that the transverse measure to $\mathcal{F}_{\varphi_n}(c_1',\delta_1\setminus\delta_1')$ is bounded above by $D_X(\mathcal{F})^{\frac{1}{2}}\cdot\epsilon$. In other words,
\begin{equation}
\label{eq:c_1',delta-delta'}
\int_{I_{c_1',\delta_1\setminus\delta_1'}}|dv_n(x)|\leq D_X(\mathcal{F})^{\frac{1}{2}}\cdot\epsilon .
\end{equation}  
 Thus we have
 \begin{equation}
\label{eq:c_1'-delta'}
i(\delta ,\mathcal{F}_{\varphi_n}(c_1',\delta_1'))\geq i(\delta ,\mathcal{F}_{\varphi_n}(c_1',\delta_1))-D_X(\mathcal{F})^{\frac{1}{2}}\cdot\epsilon .
\end{equation}

 By (\ref{eq:c_1'-est}), (\ref{eq:c_1'-delta'}) and $i(\delta ,\mathcal{F}_{\varphi_n})=i(\delta ,\mathcal{F}_n)$, we get
\begin{equation}
\label{eq:est-1}
\int_{\delta_1'}|dv_n(x)|\geq i(\delta ,\mathcal{F}_{\varphi_n}(c_1',\delta_1'))\geq i(\delta ,\mathcal{F}_n)-(1+D_X(\mathcal{F})^{\frac{1}{2}})\epsilon
\end{equation}

Since $h_{\varphi_n}(\delta )= i(\delta ,\mathcal{F}_n)$ and $\varphi_n\to\varphi$ uniformly on the compact arc $\delta_1'$, the inequalities (\ref{eq:est-1}) and (\ref{eq:delta_1}) give
\begin{equation}
\label{eq:est-2}
\limsup_{n\to\infty}h_{\varphi_n}(\delta )-(1+D_X(\mathcal{F})^{\frac{1}{2}})\epsilon\leq \int_{\delta_1'}|dv(x)|\leq
\int_{\delta_1}|dv(x)|<h_{\varphi}(\delta )+\epsilon .
\end{equation}

By letting $\epsilon\to 0$ and noting that $\limsup_{n\to\infty}h_{\varphi_n}(\delta )=i(\delta ,\mathcal{F})$ by the definition on $\varphi_n$, we obtain
\begin{equation}
\label{eq:est-3-ineq}
i(\delta ,\mathcal{F})\leq h_{\varphi}(\delta ).
\end{equation}
In particular, $\varphi$ is a non-trivial holomorphic quadratic differential.

\begin{rem}
Strebel \cite{Strebel2} noted that it is not true that $\limsup_{n\to\infty}h_{\varphi_n}(\delta )\leq h_{\varphi}(\delta )$ for general $\varphi_n\to\varphi$ with locally uniform convergence. Our sequence $\varphi_n$ is special in the sense that the homotopy classes $\mathcal{F}_n$ of the horizontal foliations of $\varphi_n$ is an increasing sequence whose union (and the limit) is the foliation $\mathcal{F}$.
\end{rem}

It remains to prove that $h_{\varphi}(\delta )\leq i(\delta ,\mathcal{F})$. Since $\lim_{n\to\infty}h_{\varphi_n}(\delta )=i(\delta ,\mathcal{F})$, it is enough to prove that $\liminf_{n\to\infty}h_{\varphi_n}(\delta )\geq h_{\varphi}(\delta )$. Since $\int_X|\varphi_n|\leq D_X(\mathcal{F})$, this follows by Strebel \cite[Corollary 4.3]{Strebel2} in the case when $X$ is the unit disk. 

Assume $X$ is not the unit disk. Any regular trajectory $\alpha$ of $\varphi$ that essentially intersects $\delta$ has one endpoint in $b_1$. By the classification of regular trajectories of finite-area holomorphic quadratic differentials on an arbitrary Riemann surface $X$ \cite[\S 13]{Strebel}, it follows that almost every regular trajectory $\alpha$ that has one endpoint in $b_1$ also has the other endpoint on the border of $X$. The other border component on which $\alpha$ has an endpoint may be the same $b$ or a different component. Since we consider the heights, our statements will be up to measure zero. Thus, we can assume that that all trajectories $\alpha$ that essentially intersect $\delta$ have one endpoint in $b_1$ and the other endpoint also at a border component in the complement of the closure of $b_1$. 

Denote by $\tilde{\varphi}$, $\tilde{\varphi}_n$ the lifts of $\varphi$, $\varphi_n$ to the universal covering (identified with the unit disk $\mathbb{D}$). Denote by $\tilde{\alpha}$, $\tilde{b}_1\subset \tilde{b}$ and $\tilde{\delta}$ single 
components of the lifts of $\alpha$, $b_1$, $b$ and $\delta$. 
The other endpoint of 
$\tilde{\alpha}$ may belong to a component of the lift of $b_1$, but it cannot belong to $\tilde{b}_1$.

The pre-image on the unit circle of the border components of $X$ consists of countably many open pairwise disjoint arcs $\{ c_j\}_j$. We decompose the family of all regular trajectories of $\tilde{\varphi}$ that essentially intersect $\tilde{\delta}$ into a countable disjoint union of families $\mathcal{F}_{c_j}$ connecting the fixed subarc $\tilde{b}_1$ with $c_j$, and possibly the family $\mathcal{F}_{\tilde{b}\setminus \tilde{b}_1}$ connecting $\tilde{b}_1$ with $\tilde{b}\setminus \tilde{b}_1$, where the decomposition is up to the set of measure zero.

Note that $i(\tilde{\delta} ,\mathcal{F}_{\tilde{\varphi}})=i(\delta ,\mathcal{F}_{\varphi})<\infty$. Thus, given $\epsilon >0$ there exists finitely many $c_1,\ldots ,c_k$ such that
$$
i(\tilde{\delta} ,\mathcal{F}_{\tilde{\varphi}})-\sum_{j=1}^k i(\tilde{\delta} ,\mathcal{F}_{c_j}) -i(\tilde{\delta} ,\mathcal{F}_{\tilde{b}\setminus \tilde{b}_1})<\epsilon .
$$
Since the transverse measure of the trajectories of $\tilde{\varphi}$ with a common endpoint is zero, it follows that there exist compact subarcs $c_j'$ of $c_j$, and a compact subarc $\tilde{b}_1'$ of the interior of $\tilde{b}\setminus \tilde{b}_1$ such that
$$
i(\tilde{\delta} ,\mathcal{F}_{c_j})-i(\tilde{\delta} ,\mathcal{F}_{c_j'})<\frac{\epsilon}{k},
$$
for $j=1,\ldots ,k$, where $\mathcal{F}_{c_j'}$ is the family of horizontal trajectories of $\tilde{\varphi}$ that connect $\tilde{b}_1$ and $c_j'$,
and
 $$
i(\tilde{\delta} ,\mathcal{F}_{\tilde{b}\setminus \tilde{b}_1})-i(\tilde{\delta} ,\mathcal{F}_{\tilde{b}_1'})<{\epsilon},
$$
where $\mathcal{F}_{\tilde{b}_1'}$ is the family of horizontal trajectories of $\tilde{\varphi}$ that connect $\tilde{b}_1$ and $\tilde{b}_1'$.

Denote by $v_j^l$ and $v_j^r$ the two horizontal trajectories of $\tilde{\varphi}$ in 
$\mathcal{F}_{c_j'}$ such that all other trajectories of $\mathcal{F}_{c_j'}$ are between these two. If either of $v_j^l$ and $v_j^r$ is not a regular trajectory, we replace it with a regular trajectory connecting $\tilde{b}_1$ with $c_j'$ such that the transverse measure of the trajectories connecting $\tilde{b}_1$ with $c_j'$ but not between $v_j^l$ and $v_j^r$ is less than $\epsilon /k$. 

The part of $\mathbb{D}$ between $v_j^l$ and $v_j^r$ is a simply connected domain $D_j$, and there exists $N>0$ such that the number of points in the preimage in $D_j$ of each point of $X$ is at most $N$. Therefore, the integrals 
$\int_{D_j}|\tilde{\varphi}_n|$ and $\int_{D_j}|\tilde{\varphi}|$ are bounded above by $N\cdot D_{X}(\mathcal{F})$. We replace $v_j^l$ and $v_j^r$ by totally regular trajectories $(v_j^l)'$ and $(v_j^r)'$ for $\tilde{\varphi}$ in $D_j$ such that the transverse measure of the trajectories separating $v_j^l$ and $(v_j^l)'$, as well as the trajectories separating $v_j^r$ and $(v_j^r)'$ is less than $\epsilon$.

By Strebel \cite[Theorem 3.2.2]{Strebel-disk}, there exist two totally regular trajectories $t_{j,n}^l$ and $t_{j,n}^r$ of $\tilde{\varphi}_n$ that are in the prescribed Euclidean neighborhoods of $(v_j^l)'$ and $(v_j^r)'$ inside $D_j$. We choose these neighborhoods to be disjoint from $v_j^l$ and $v_j^r$. It follows that the $\tilde{\varphi}$-height between $(v_j^l)'$ and $(v_j^r)'$ inside $D_j$ is equal to the limit infimum of the $\tilde{\varphi}_n$-heights between $t_{j,n}^l$ and $t_{j,n}^r$ as $n\to\infty$ by \cite[Theorem 4.3]{Strebel2}. This implies that
$$
\liminf_{n\to\infty}h_{\tilde{\varphi}_n}(\delta )\geq h_{\tilde{\varphi}}(\delta )-6\epsilon .
$$
By letting $\epsilon\to 0$ and noting that $h_{\tilde{\varphi}_n}(\tilde{\delta} )=h_{{\varphi}_n}(\delta )$ and $h_{\tilde{\varphi}}(\tilde{\delta} )=h_{{\varphi}}(\delta )$, we obtain
$$
\liminf_{n\to\infty}h_{{\varphi}_n}(\delta )\geq h_{{\varphi}}(\delta )
$$
which finishes the proof.
$\Box$

\end{document}